\newtheorem{theorem}{Theorem}[section]
\newtheorem{lemma}[theorem]{Lemma}
\newtheorem{proposition}[theorem]{Proposition}
\theoremstyle{definition}
\newtheorem{example}[theorem]{Example}
\theoremstyle{remark}
\newtheorem{remark}[theorem]{Remark}
\newcommand{\g}{\mathfrak{g}}
\newcommand{\gtl}{\tilde{\mathfrak{g}}}
\newcommand{\h}{\mathfrak{h}}
\newcommand{\n}{\mathfrak{n}}
\newcommand{\ntl}{\tilde{\mathfrak{n}}}
\newcommand{\Z}{\mathbb{Z}}
\newcommand{\C}{\mathbb{C}}
\newcommand{\N}{\mathbb{N}}
\newcommand{\lam}{\Lambda_{\ell-1}}
\newcommand{\llam}{\Lambda_{\ell}}
\newcommand{\vac}    {\ensuremath{{\bf 1}}}
\numberwithin{equation}{section}
\begin{document}

\title{Combinatorial bases of Feigin-Stoyanovsky's type subspaces of level 2 standard modules for $D_4^{(1)}$}

\author{Ivana Baranovi\'c}
\address{Faculty of Chemical Engineering and Technology, University of Zagreb, Maruli\'cev trg 19, Zagreb, Croatia}
\curraddr{}
\email{ibaranov@fkit.hr}
\thanks{}

\subjclass[2000]{Primary 17B67; Secondary 17B69, 05A19.\\ \indent Partially supported by the Ministry of Science and Technology of the Republic of Croatia, Project ID 037-0372794-2806}
\keywords{}
\date{}
\dedicatory{}

\begin{abstract}
Let $\gtl$ be an affine Lie algebra of type $D_{\ell}^{(1)}$ and $L(\Lambda)$ its standard module with a highest weight vector $v_{\Lambda}$. For a given $\Z$-gradation $\gtl = \gtl_{-1} + \gtl_0 + \gtl_1$, we define Feigin-Stoyanovsky's type subspace as
$$W(\Lambda) = U(\gtl_1) \cdot v_{\Lambda}.$$ 
By using vertex operator relations for standard modules we reduce the Ponicar\'{e}-Brikhoff-Witt spanning set of $W(\Lambda)$ to a basis and prove its linear independence by using Dong-Lepowsky intertwining operators. 
\end{abstract}

\maketitle

\section{Introduction}
J. Lepowsky and R.L. Wilson initiated a program whose goal was to study representations of affine Lie algebras by means of vertex operators and to find Rogers-Ramanujan type combinatorial bases of these representations (\cite{LW1}, \cite{LW2}). Using this approach, Lepowsky and Primc obtained in \cite{LP} new character formulas for standard $\mathfrak{sl}(2, \C)^{\widetilde{}}$-modules. In \cite{FS} Feigin and Stoyanovsky gave another proof and combinatorial interpretation of these formulas. One of the new ingredients introduced in the paper were so-called principal subspaces. These subspaces are generated by the affinization of the nilpotent subalgebra $\n_+$ of $\g$ from the triangular decomposition $\g = \n_{-} \oplus \h \oplus \n_+$. Using the intertwining operators, G. Georgiev (\cite{G}) proved the linear independence of quasi-particle bases of Feigin-Stoyanovsky level $k$ principal subspaces for $\mathfrak{sl}(\ell + 1, \C)$. 

Similar approach was taken by S. Caparelli, Lepowsky and A. Milas in \cite{CLM1} and \cite{CLM2}. They obtained Rogers-Selberg recursions for characters of level $k$ principal subspaces for $\mathfrak{sl}(2, \C)$ using the so-called Dong-Lepowsky intertwining operators (\cite{DL}). As a continuation of this work, C. Calinescu obtained reccurence system for the characters of principal subspaces of level 1 standard modules for $\mathfrak{sl}(\ell + 1, \C)^{\widetilde{}}$ (\cite{C1}) and of certain higher-level standard modules for $\mathfrak{sl}(3, \C)^{\widetilde{}}$ (\cite{C2}). 

In \cite{P1} M. Primc studied an object similar to the principal supbspace, the so-called Feigin-Stoyanovsky's type subspace $W(\Lambda)$ for $\g = \mathfrak{sl}(\ell + 1, \C)$. He constructed a combinatorial base for this space using Schur functions. A basis of the whole $L(\Lambda)$ could then be obtained as an inductive limit of translations of the basis of $W(\Lambda)$ by a certain Weyl group element. This was done for a particular choice of gradation of $L(\Lambda)$ and for any dominant $\Lambda$. It remained unclear how to extend this approach to other algebras. In \cite{P2}, Primc found combinatorial basis of $W(\Lambda)$ for all classical Lie algebras and any possible gradation but only for basic modules $L(\Lambda_0)$. In this paper, he applied crystal base character formula (\cite{KKMMNN}) for level one modules but level $k > 1$ modules were not discussed.

Using the Caparelli-Lepowsky's-Milas' approach via intertwining operators and the description of the basis from \cite{FJLMM}, Primc gave in \cite{P3} a simpler proof of linear independence for the basis of $W(\Lambda)$ constructed in \cite{P1}. In the similar manner, G. Trup\v cevi\'c obtained in \cite{T1} and \cite{T2} combinatorial bases of Feigin-Stoyanovsky's type subspaces $W(\Lambda)$ for all standard  modules of any level or gradation for $\mathfrak{sl}(\ell + 1, \C)$ whilst M. Jerkovi\'c managed to obtain the systems of recurrence relations for formal characters of Feigin-Stoyanovsky's type subspaces of level $k$ standard modules for $\mathfrak{sl}(\ell + 1, \C)^{\widetilde{}}$ (\cite{J}).

In this paper we extend the same approach to construct a combinatorial basis for Feigin-Stoyanovsky's type subspace of four standard level 1 modules for algebra of type  $D_{\ell}^{(1)}$. Then we construct a combinatorial basis for Feigin-Stoyanovsky's type subspace of the level 2 standard modules for $D_{4}^{(1)}$ type algebra. 

Let $\g$ be a simple Lie algebra, $\h$ a Cartan subalgebra of $\g$ and $R$ the corresponding root system. We have a root space decomposition $\g = \h + \sum_{\alpha \in R} \g_{\alpha}$ and we fix root vectors $x_{\alpha}$. For a minuscule co-weight $\omega$ we denote by $\Gamma$ the set of all roots such that $\alpha(\omega) = 1$. Then

\begin{align}
 \g = \g_{-1}  \oplus \g_0 \oplus \g_1 
\end{align}

\noindent is a $\Z$-gradation of $\g$ for $\g_{\pm 1} = \sum_{\alpha \in \Gamma} \g_{\pm \alpha}$, $\g_0 = \h + \sum_{\omega(\alpha)=0} \g_{\alpha}$

To $\g$ we associate the affine Lie algebra, $\gtl = \g \otimes \C c \otimes \C d $, with the canonical central element $c$, degree operator $d$ and fixed real root vectors $x_{\alpha}(n) = x_{\alpha} \otimes t^n$. Gradation of $\g$ induces $\Z$-gradation of $\gtl$:

$$ \gtl = \gtl_{-1}  \oplus \gtl_0 \oplus \gtl_1.$$

 $\gtl_1 = \g_1 \otimes \C [t, t^{-1}]$ has a basis

\begin{align}
 \{x_{\gamma}(j) \ \vert \ j \in \Z, \gamma \in \Gamma \ \}.
\end{align}

Let $L(\Lambda)$ be a standard $\gtl$-module of level $k=\Lambda(c)$ and let $v_{\Lambda}$ be its highest weight vector. A Feigin-Stoyanovsky's type subspace is a $\gtl_1$-submodule of $L(\Lambda)$ generated with $v_{\Lambda}$,
\begin{align}
W(\Lambda) = U(\gtl_1) \cdot v_{\Lambda}.
\end{align}
The goal is to find a combinatorial basis of $W(\Lambda)$ whose elements are of the form $x(\pi) v_{\Lambda}$ where $x(\pi)$ are \textit{monomials }consisting of $\{ \ x_{\gamma}(-j) \ \vert j \in \N, \gamma \in \Gamma \}$ factors. We call such basis also \textit{a monomial basis}. 

This paper is organized as follows: in Sections \ref{algebra} - \ref{spinor} we give the setting. In Section \ref{feigin} we define the Feigin-Stoyanovsky subspace and monomial ordering on the set of monomials.  

In Sections \ref{uvjeti_razlike} - \ref{kraj1} we first study $W(\Lambda)$ when $\Lambda$ is of level 1. By Poincar\'e-Birkhoff-Witt's theorem the set of \textit{monomial vectors} 
\begin{align}\label{spanning}
\{ \ x_{\beta_{1}}(-j_1) \dots x_{\beta_{r}}(-j_r)v_{\Lambda} \ \vert \ \beta_{i} \in \Gamma, r \in \Z_+, j_r \in \N \ \}
\end{align}
spans $W(\Lambda)$. We reduce set \eqref{spanning} using the relations between vertex operators. We express the smallest element with regard to the linear ordering we established in \ref{order} as the sum of the greater ones. Therefore this smallest element can be excluded from the spanning set \eqref{spanning}. The elements that do not contain this smallest monomials are said to satisfy so-called difference and initial conditions on $W(\Lambda)$. Such elements span $W(\Lambda)$. In Section \ref{kraj1} we show their linear independence and it follows these monomials form a basis of the Feigin-Stoyanovsky's type subspace. In the proof of the linear independence, we use the operators which commute with the action of $\gtl_1$ and send certain vectors from one $W(\Lambda^1)$ subspace to another $W(\Lambda^2)$ subspace. This operators will be the specific coefficients of the intertwining operators (see \cite{DL}). We also use the operator $e(\lambda)$ which has the property to lift the degree of the monomial (cf. \cite{DLM}), i.e.
\begin{align} \label{operatoric}
x_{\alpha}(n)e(\lambda)=e(\lambda)x_{\alpha}(n + \langle \lambda, \alpha\rangle), \qquad n \in \Z.
\end{align}
Using the intertwining operators and the operator \eqref{operatoric} we prove the linear independence by induction on the degree of the monomial on all subspaces of level 1 modules simultaneously. 

In Sections \ref{pocetak2} - \ref{kraj2} we study Feigin-Stoyanovsky's type subspaces of $L(\Lambda)$ where $L(\Lambda)$ is a fundamental level two module for algebra of type $D_4^{(1)}$. We also describe the monomials that satisfy the difference and initial conditions on such $W(\Lambda)$. They span $W(\Lambda)$ and we prove their linear independence using the intertwining operators and the simple current operator. 

As for generalization of the result, i.e. the construction of the combinatorial basis for $W(\Lambda)$ when $\Lambda$ is a level two fundamental weight in cases $\ell > 4$, the main problem remains the proof of the linear indpenedence of the spanning set. Namely, it is at the moment unclear how to construct the adequate intertwining operators. 

I thank Mirko Primc for his help on the topic and valuable suggestions. 

\section{Affine Lie algebras and basic modules}\label{algebra}

For $\ell \geq 4$ let $\g$ be a simple Lie algebra of type $D_{\ell}$, $\h$ a Cartan subalgebra of $\g$ and $R$ the corresponding root system. Fix a basis $\Pi = \{\alpha_1, \dots, \alpha_l \}$ of $R$ where 
$\alpha_1 = \epsilon_1 - \epsilon_2, \dots , \alpha_{\ell-1} = \epsilon_{\ell-1} - \epsilon_{\ell}$,  $\alpha_{\ell} = \epsilon_{\ell-1} + \epsilon_{\ell}$ and $\epsilon_i, \ i=1, \dots, \ell$ are the usual orthonormal unit vectors. Then we have the corresponding triangular decomposition, $\g = \n_{-} \oplus \h \oplus \n_+$.

We denote by $\langle \cdot,\cdot\rangle$ the Killing form on $\g$ which enables us to identify $\h \cong \h^*$. Let $\theta$ be the maximal root, i.e. $\theta = \epsilon_1 + \epsilon_2$. Then the Killing form can be  normalized so that $\langle \theta,\theta\rangle = 2$. 

For each root $\alpha$ we fix a root vector $x_{\alpha}$. Let $Q=Q(R)$ and $P=P(R)$ be the root and weight lattice of $\g$ respectively.

Denote by $\omega_1, \dots, \omega_{\ell}$ the fundamental weights and define, for convenience,  $\omega_0 = 0$. In this paper we fix $\omega = \omega_1=\epsilon_1$. Notice that $\omega$ is a minuscule weight, i.e. 
$$\{\langle \omega, \alpha \rangle \ \vert \ \alpha \in R \} = \{-1, 0 ,1 \}.$$
This gives us the induced $\Z$-gradation of $\g$:
\begin{align}\label{grad}
\g = \g_{-1} + \g_0 + \g_1.
\end{align}
Set $\Gamma = \{ \alpha \in R \ \vert \ \langle \omega, \alpha  \rangle = 1\}.$
Then $$\Gamma = \{ \gamma_{2}, \dots, \gamma_{\ell}, \gamma_{\underline{\ell}}, \dots, \gamma_{\underline{2}} \}$$
where $\gamma_i = \epsilon_1 + \epsilon_i$  and $\gamma_{\underline{i}} = \epsilon_1 - \epsilon_i$, $i=1, \dots, \ell$.  Notice that $\gamma_2$ is the maximal root, $\gamma_2 = \theta$.

Denote by $\gtl$ the associated affine Lie algebra (see, for example, \cite{K}),
$$\gtl = \g \otimes \C [t, t^{-1}] \oplus \C c \oplus \C d$$
where $c$ is the canonical central element and $d$ the degree operator. Set $x(j)=x \otimes t^{j}$ for $x \in \g$, $j \in \Z$ and let $x(z)=\sum_{n \in \Z} x(n)z^ {-n-1}$ be the formal Laurent series in formal variable $z$.
Set $\h^e = \h \oplus \C c \oplus \C d$, $\ntl_{\pm} = \g \otimes t^{\pm 1} \C [t^{\pm 1}] \oplus \n_{\pm}$. We have a triangular decomposition: $\gtl = \ntl_{-} \oplus \h^e \oplus \ntl_{+}$ and the induced $\Z$-gradation with respect to $\omega$:

$$\gtl=\gtl_{-1} + \gtl_0 + \gtl_1.$$

Let $\widetilde{\Pi} = \{\alpha_0, \alpha_1, \cdots, \alpha_{\ell} \} \subset (\h^e)^*$ be the set of simple roots of $\gtl$. We define the fundamental weights $\Lambda_i \subset (\h^e)^*$ by 
$\langle \Lambda_i, \alpha_j \rangle = \delta_{ij}$ and $\Lambda_i (d)=0$ where $\langle \cdot, \cdot \rangle$ is the standard extension of Killing form onto $\h^e$. 

Let $L(\Lambda)$ be a standard $\gtl$-module, i.e. an irreducible highest weight module with dominant integral highest weight 

\begin{align}\label{tezina}
\Lambda = k_0 \Lambda_0 + k_1 \Lambda_1 + \dots + k_l \Lambda_{\ell}, \quad k_i \in \Z_{+}, \ i=1, \dots, \ell.
\end{align}
The central element $c$ acts on $L(\Lambda)$ as a scalar $k$
which is called the level of module $L(\Lambda)$. Modules $L(\Lambda_0), L(\Lambda_1), L(\Lambda_{\ell-1})$ and $L(\Lambda_{\ell})$ are of level $1$, whilst $L(\Lambda_i), i=2, \dots, \ell -2$ are level $2$ modules. In general, module $L(\Lambda)$ with $\Lambda$ as in \eqref{tezina} is of the level
$$k=k_0 + k_1 + 2k_2 + \dots + 2k_{\ell-2} + k_{\ell-1} + k_{\ell}.$$

\section{Feigin-Stoyanovsky's type subspaces $W(\Lambda)$}\label{feigin}
Let $L(\Lambda)$ be a standard $\gtl$-module with a dominant integral highest weight
$$\Lambda = k_0 \Lambda_0 + k_1 \Lambda_1 + \dots + k_{\ell} \Lambda_{\ell}.$$
Then $L(\Lambda)$ is of level $k=k_0 + k_1 + 2 k_2 + \dots +2 k_{\ell-2} + k_{\ell-1} + k_{\ell}$. Fix a highest weight vector $v_{\Lambda}$.

Recall the induced $\Z$-gradation we have on $\gtl$ with respect to fundamental weight $\omega=\omega_1$:
$$\gtl = \gtl_{-1} + \gtl_0 + \gtl_1.$$
We define Feigin-Stoyanovsky's type subspace as
$$W(\Lambda) = U(\gtl_1) \cdot v_{\Lambda} \subset L(\Lambda).$$
If we further on set 
$$\gtl_1^{-}=\gtl_1 \cap \ntl_{-} = \g_1 \otimes t^{-1} \C [t^{-1}]$$
then obviously
$$W(\Lambda) = U(\gtl_1^{-}) \cdot v_{\Lambda}.$$
By Poincar\'e-Birkhoff-Witt theorem, the spanning set of $W(\Lambda)$ consists of monomial vectors
\begin{align}\label{PBW}
\{ x_{\delta_1}(-n_1) x_{\delta_2}(-n_2) \dots x_{\delta_r}(-n_r)v_{\Lambda} \ \vert \ r \in \Z_+, \ n_i \in \N, \ \delta \in \Gamma  \ \}.
\end{align}
Elements of this spanning set can be identified with the monomials from $U(\gtl_1) \cong S(\gtl_1)$. Therefore we often refer to the elements of the form $ x_{\delta}(-j)$,  $\delta_i \in \Gamma, \ n \in \N,$ as \textit{the variables, elements or factors of the monomial}.

\section{Monomial ordering}\label{order}
To reduce the spanning set \eqref{PBW} of $W(\Lambda)$ to a basis, we establish a linear order on the set of monomials. The monomials from $U(\gtl_1)$ can be interpreted as so-called \textit{colored partitions}: $\pi : \{ x_{\delta}(-j) \ \vert \ \delta \in \Gamma, \ j \in \Z \} \longrightarrow \Z_+$ denotes the monomial
$$x(\pi) = x_{\delta_{1}} (-j_1)^{\pi(x_{\delta_{1}}(-j_1))} x_{\delta_{2}}(-j_2)^{\pi(x_{\delta_{2}}(-j_2))} \dots x_{\delta_r}(-j_r)^{\pi(x_{\delta_{r}}(-j_r))}.$$
$\h$-weights $\delta_{i} \in \Gamma$ are referred to as \textit{colors}. For color $\gamma = \epsilon_1 \pm \epsilon_j$ we will define \textit{the oposite color} as $\underline{\gamma} = \epsilon_1 \mp \epsilon_j$. We will also sometimes call $\gamma_{\underline{i}}$, $i=2, \dots, \ell$, \textit{negative} and $\gamma_i$, $i=2, \dots, \ell$, \textit{positive colors}.
In further text we use the label $x(\pi)$ for the monomials from $U(\gtl_1)$.

Next we define \textit{the shape} of the partition $\pi$. Partition $\pi : \{ x_{\delta}(-n) \ \vert \ \delta \in \Gamma, n \in \N \}$ (i.e. monomial $x(\pi)$) is of the shape
\begin{align*}
& s(\pi)  :  \Z \longrightarrow \Z_{+} \\
& s(\pi)(j)  =  \sum_{\delta \in \Gamma} \pi(x_{\delta}(-j))
\end{align*}
which can be graphically displayed. 
\begin{example}
Shape of the monomial $x(\pi)=x_{\delta_{4}}(-4)x_{\delta_{3}}(-4)x_{\delta_{2}}(-3)x_{\delta_{1}}(-2)$ looks like:
\begin{center}\begin{picture}(100,50)(-30,-25) \thinlines
\multiput(0,0)(0,10){3}{\line(1,0){40}}
\multiput(0,-10)(0,10){1}{\line(1,0){30}}
\multiput(0,-20)(0,10){1}{\line(1,0){20}}
\multiput(0,-20)(0,10){1}{\line(0,1){40}}
\multiput(10,-20)(0,10){1}{\line(0,1){40}}
\multiput(20,-20)(0,10){1}{\line(0,1){40}}
\multiput(30,-10)(0,10){1}{\line(0,1){30}}
\multiput(40,0)(0,10){1}{\line(0,1){20}}
\end{picture}\end{center}
\end{example}
We define that $s(\pi) < s(\pi')$ if there exist $j_0 \in \N$ such that $s(\pi)(j) = s(\pi')(j)$ for $j < j_0$ and $s(\pi)(j_0) < s(\pi')(j_0)$. 

We further define order on the set of colors, i.e. on $\Gamma$. We set:
$$\gamma_2 > \gamma_3 > \dots > \gamma_{\ell} > \gamma_{\underline{\ell}} > \dots > \gamma_{\underline{2}}.$$

Now, on the set of factors $x_{\gamma}(-j)$ we define

$$x_{\delta}(-i) < x_{\tau}(-j) \qquad \textrm{if} \qquad \left \{ \begin{array}{l}
-i<-j \textrm{\quad or}\\
i=j \textrm{\quad i \quad} \delta < \tau.
\end{array} \right.$$
Since elements from $U(\gtl_1) = S(\gtl_1)$ commute, we can assume from now on that elements in monomial $x(\pi)$ are sorted descending from right to left. Order on monomials is then defined as follows: let 
\begin{align*}
x(\pi) &= x_{\delta_r}(-i_r)x_{\delta_{{r-1}}} (-i_{r-1})\dots x_{\delta_2}(-i_2)x_{\delta_1}(-i_1) \quad \textrm{and} \\
x(\pi') &= x_{\delta_s'}(-i_s)x_{\delta_{s-1}'}(-i_{s-1}') \dots x_{\delta_2'}(-i_2')x_{\delta_1'}(-i_1').
\end{align*}
Then $x(\pi) < x(\pi')$ if 
\begin{itemize}
\item[-] $s(\pi) < s(\pi')$ (we first compare shapes) or 
\item[-] $s(\pi) = s(\pi')$ and there exist $j_0 \in \N$ such that $\delta_j = \delta_j'$ for all $j < j_0$ and $\delta_{j_0} < \delta_{j_0}'$ (if monomials are of the same shape, we compare their colors from right to left). 
\end{itemize}
Such order is compatible with multiplication in $S(\gtl_1)$:
\begin{proposition}\label{uredjaj}
Let $x(\delta_1) \leq x (\delta_2)$ and $x(\mu_1) \leq x(\mu_2)$. Then $x(\delta_1) x(\delta_2) \leq x(\mu_1)x(\mu_2)$ and if any of the first two inequalities is strict, then so is the last one.
\end{proposition}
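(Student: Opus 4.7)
My plan is to split the verification along the two levels of the order: shape first, then colors within fixed shape. For shapes, note that $s(\pi \pi')(j) = s(\pi)(j) + s(\pi')(j)$ for every $j$, since multiplying monomials simply concatenates the lists of factors. The first step is to check that the lexicographic order on shape functions (scanned $j = 1, 2, \ldots$ upward) is preserved by pointwise addition: if $s(\pi_1) \leq s(\mu_1)$ and $s(\pi_2) \leq s(\mu_2)$, then $s(\pi_1) + s(\pi_2) \leq s(\mu_1) + s(\mu_2)$, with strictness inherited from either summand. Letting $j_i \in \N \cup \{\infty\}$ be the first index at which the $i$-th pair of shape functions disagrees, a short case analysis on $\min(j_1, j_2)$ settles this directly. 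If the shape inequality for the product turns out strict, the proposition follows; otherwise the same argument forces $s(\delta_1) = s(\mu_1)$ and $s(\delta_2) = s(\mu_2)$, because any strict summand would propagate to the sum.

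In the remaining case the comparison is entirely among colors. I would encode each colored partition of fixed shape as the word obtained by listing, for $j = 1, 2, \ldots$, the colors at exponent $-j$ in decreasing order; under this encoding the color part of $<$ is ordinary lexicographic comparison, and multiplication corresponds blockwise to multiset union followed by re-sorting each block. The required statement becomes: for color-words $A \leq A'$ and $B \leq B'$ with matching block sizes, the blockwise merges satisfy $A \cup B \leq A' \cup B'$. I would locate the first block $-j_0$ at which the $A,A'$ words disagree, observe that $A = A'$ and $B = B'$ blockwise for all smaller $j$, and compare the $j_0$-th merged blocks via the threshold counts $\#\{c : c > \tau\}$, which are additive under multiset union. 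The strictness from $A < A'$ at $-j_0$ then survives the addition of the identical $B$-contribution; the case in which strictness comes only from the $(\delta_2, \mu_2)$ pair is symmetric.

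The main obstacle is exactly this last merging step. A strict color deficit at block $-j_0$ between $x(\delta_1)$ and $x(\mu_1)$ can be displaced to a later absolute position inside the merged color word by interleaved contributions from $B, B'$, and one has to rule out the possibility that these interleavings erase the deficit. The threshold-count formulation is what makes this precise: a strict decrease in $\#\{c : c > \tau\}$ at block $-j_0$ persists after adding the same count from the $B$-side, so the merged words must strictly differ at the first threshold where the original words did, which gives the claimed strict inequality and completes the proposition.
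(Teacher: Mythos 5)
Your proposal is correct and takes essentially the same route as the paper: first reduce along shapes, where the lexicographic order is visibly preserved under the pointwise addition $s(\pi\pi')=s(\pi)+s(\pi')$, and then handle the equal-shape case as a blockwise merge of decreasingly sorted color words, which is precisely the ``merge sort'' case the paper dispatches by citing [T1]. Your additive threshold-count criterion $\#\{c : c>\tau\}$ is a clean and complete way to carry out that deferred merge step (it correctly handles both the case where only one pair contributes the strict block difference and the case where both do), so the two arguments coincide in structure, with yours supplying the details the paper only references.
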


\begin{proof}
We divide the proof in three main cases:
\begin{itemize}
 \item[1)] both inequalities $x(\delta_1) \leq x (\delta_2)$ and $x(\mu_1) \leq x(\mu_2)$ come from shape comparison,
\item[2)] one of the inequalities comes from shape and the other from color comparison,
\item[3)] both inequalities come from color comparison.
\end{itemize}
Cases 1) and 2) are straightforward, and the third case is an example of a merge sort, see for instance \cite{T1}.
\end{proof}

\section{Vertex operator construction of level one modules}

In the paper we use the well known Frenkel-Kac-Segal vertex operator algebra construction of the standard $\gtl$-modules of level 1, \cite{FK}, \cite{S}. The brief outline of the construction follows and the details can be found in \cite{FLM}, \cite{DL} or \cite{LL}. We use the notation from \cite{LL}.

Let $M(1)$ denote the Fock space for the homogenous Heisenberg subalgebra and let $\C [P]$ be the group algebra of the weight lattice $P$ with a basis $e^{\lambda}, \lambda \in P$. Set $V_Q = M(1) \otimes \C [Q]$ and $V_P = M(1) \otimes \C [P]$. The action of Heisenberg subalgebra on the tensor product $V_P$ extends to the action of Lie algebra $\gtl$ via the vertex operator formula 
\begin{align}\label{vertex}
x_{\alpha}(z)= Y(e^{\alpha},z) = E^{-}(- \alpha,z) E^{+}(-\alpha,z) e_{\alpha} z^{\alpha}
\end{align}
for properly chosen root vectors $x_{\alpha}$. Here $z^{\alpha} = 1 \otimes z^{\alpha}$, $z^{\alpha}e^{\lambda} = z^{\langle \alpha, \lambda \rangle}$ and
$$E^{\pm}(\alpha,z) = E^{\pm}(\alpha,z) \otimes 1 = \textrm{exp} \left( \sum_{n>0} \frac{\alpha(\pm n) z^{\mp n}}{\pm n}  \right) \otimes 1.$$

On $V_P$ we have the following gradation: a homogenous vector $v = h(-n_1) \dots h(-n_r) \otimes e^{\lambda}$ is of degree
\begin{align}\label{gradacija}
\textrm{deg} (v) = -n_1 - n_2 - \dots -n_r - \frac{1}{2} \langle \lambda, \lambda \rangle .
\end{align}

Then, as a $\gtl$-module
$$M(1) \otimes \C [P] = L(\Lambda_0) + L(\Lambda_1) + L(\Lambda_{\ell-1}) + L(\Lambda_{\ell}).$$
More precisely,
$$L(\Lambda_0) \cong V_Q, \quad L(\Lambda_1) \cong e^{\omega_1} V_Q = e^{\omega}V_Q, \quad L(\lam) \cong V_Q e^{\omega_{\ell-1}}, \quad L(\llam) \cong V_Q e^{\omega_{\ell}} $$
with highest weight vectors $v_{\Lambda_0} = e^{\omega_0} = \vac, \ v_{\Lambda_1} = e^{\omega_1}, \ v_{\lam} = e^{\omega_{\ell-1}}$ and $v_{\llam} = e^{\omega_{\ell}}$ respectively. The space $V_Q$ has the structure of the vertex operator algebra (VOA) and $V_P$ is a module for this algebra (cf. \cite{FLM}, \cite{DL}).

Vertex operators $Y(e^{\lambda},z), \ \lambda \in P$ satisfy generalized Jacobi identity. We shall use the intertwining operators:
$$\mathcal{Y}: V_P \longrightarrow (\textrm{End} V_P) \{ z \},$$
$$v \mapsto \mathcal{Y}(v,z),$$
defined for $v=v^{*}\otimes e^{\mu} \in V_P$ as:
\begin{align} \label{vert_def}
\mathcal{Y}(v,z) = Y(v,z)e^{i \pi \mu}c(\cdot, \mu).
\end{align}
Operators $Y(e^{\lambda},z_1)$ and $\mathcal{Y}(e^{\mu}, z_2)$, $\mu, \lambda \in P$, satisfy the ordinary Jacobi identity and restrictions of $\mathcal{Y}(e^{\mu},z)$ are in fact maps

\begin{align}\label{intop}
\mathcal{Y}(e^{\mu},z) : L(\Lambda_i) \longrightarrow L(\Lambda_j)
\end{align}
if $\mu + \omega_i \equiv \omega_j \ \textrm{mod} \ Q.$ This restrictions give us the intertwining operators between standard modules of level 1 (\cite{DL}).

The commutator formula (\cite{DL})
\begin{align}\label{komutator}
[ Y(e^{\gamma},z_1), \mathcal{Y}(e^{\mu}, z_2) ] = \textrm{Res}  
\end{align}
and the relation
\begin{align}\label{formula}
\mathcal{Y}(e^{\lambda}, z_0) e^{\nu} = Ce^{\lambda + \nu}z_0^{\langle \lambda, \nu \rangle} + \dots \ \in z_0^{\langle \lambda, \nu \rangle} V_P [[z_0]]
\end{align}
show that
\begin{align}\label{komutiranje}
[Y(e^{\gamma},z_1) , \mathcal{Y}(e^{\mu},z_2)] = 0
\end{align}
if and only if
\begin{align}\label{uvjet_kom}
 \langle \gamma, \mu \rangle \geq 0 \qquad \textrm{for all} \ \gamma \in \Gamma.
\end{align}

\section{Simple current operator}\label{op_pr_struje}
For $\lambda \in P$ we denoted by $e^{\lambda}$ the multiplication operator $1 \otimes e^{\lambda}$ on $V_P= M(1) \otimes \C [P]$. Let 

\begin{align*}
e(\lambda): V_P \longrightarrow V_P \\
e(\lambda) = e^{\lambda} \epsilon(\cdot, \lambda).
\end{align*}

Then $e(\lambda)$ is obviously a linear bijection and using the vertex operator formula \eqref{vertex} we have the commutation relation:
$$Y(e^{\alpha},z)e(\lambda)=e(\lambda)z^{\langle \lambda, \alpha \rangle}Y(e^{\alpha},z), \qquad \textrm{for} \ \alpha \in R$$
which, in terms of components, gives:
\begin{align}\label{omega}
x_{\alpha}(n)e(\lambda)=e(\lambda)x_{\alpha}(n + \langle \lambda, \alpha\rangle), \qquad n \in \Z.
\end{align}
If we put $\lambda = \omega$ and $\gamma \in \Gamma$ in \eqref{omega}, we get
$$x_{\gamma}(n)e(\omega)=e(\omega)x_{\gamma}(n+1).$$
i.e. commuting of the $e(\omega)$ raises degree of factor $x_{\gamma}(n)$ by one. We will call $e(\omega)$ a simple current operator (\cite{DLM}). The restricitions of this operator give bijections between the level 1 modules:
\begin{align}
e(\omega) &: L(\Lambda_0) \longrightarrow L(\Lambda_1), \\
e(\omega) &: L(\Lambda_1) \longrightarrow L(\Lambda_0), \\
e(\omega) &: L(\Lambda_{\ell-1}) \longrightarrow L(\Lambda_{\ell}) \quad \textrm{and} \\
e(\omega) &: L(\Lambda_{\ell}) \longrightarrow L(\Lambda_{\ell-1}).
\end{align}
Namely, we have the following relations:
\begin{align}
e(\omega) & v_{\Lambda_0} = C_1 \cdot v_{\Lambda_1}, \\
e(\omega) & v_{\Lambda_1} = C_2 \cdot x_{\gamma_{\underline{2}}}(-1)x_{\gamma_2}(-1)v_{\Lambda_0}, \\
e(\omega) & v_{\Lambda_{\ell-1}} = C_3 \cdot x_{\gamma_{\underline{\ell}}}(-1) v_{\Lambda_{\ell}} \quad \textrm{and} \\
e(\omega) & v_{\Lambda_{\ell-1}} = C_4 \cdot x_{\gamma_{\ell}}(-1) v_{\Lambda_{\ell-1}}
\end{align}
where $C_i,  \ i = 1,2,3,4$ are some constants.
We will also use the operator
$$e(n \omega) \cong e(\omega)^{n}, \qquad \textrm{for $n \in \N$}.$$
Further on, on tensor product we can define the tensor product of $e(\lambda)$ operators:
\begin{align}
 e(\lambda) = e(\lambda) \otimes e(\lambda) \otimes \dots \otimes e(\lambda): V_P^{\otimes k} \longrightarrow V_P^{\otimes k}
\end{align}
and equation \eqref{omega} is again valid
for $\gamma \in \Gamma$.  
\section{Spinor representations of $\g$}\label{spinor}
We will revise now in short spinor representations of the finite dimensional Lie algebra $\g$ of type $D_{\ell}$ (see, for instance, \cite{FH} for details). Spinor representation are of fundamental weights $\omega_{\ell-1}$ and $\omega_{\ell}$ and can be realized via embedding of algebra $\g$ into Clifford algebra $\textrm{Cliff}(V, B) = \textrm{C}(B)$ where $V=\C ^{2 \ell}$ and $B$ is a symmetric, bilinear form on $V$. If we set $V = W \oplus W'$ where $W$ and $W'$ are the maximal isotropic subspaces for the form $B$, then we have an isomorphism
$$\textrm{C}(V,B) \cong \textrm{End} (\wedge^* W)$$
where $\wedge^* W$ denotes the exterior algebra of W: $\wedge^* W = \wedge^0 W \oplus \dots \oplus \wedge^ {\ell} W$. The embedding of $D_{\ell}$ into Clifford algebra gives therefore a representation of $D_{\ell}$ on $\wedge^*W$ and this representation decomposes into two irreducible ones. The following holds:
\begin{itemize}
 \item [-] if $\ell$ is even, then $\wedge^{(\textrm{even})} W$ and $\wedge^{(\textrm{odd})} W$ are the irreducible representations of weights $\omega_{\ell}$ and $\omega_{\ell-1}$ respectively,
\item [-] if $\ell$ is odd, then $\wedge^{(\textrm{even})} W$ and $\wedge^{(\textrm{odd})} W$ are the irreducible representations of weights $\omega_{\ell-1}$ and $\omega_{\ell}$ respectively.
\end{itemize}
Let $e_I = e_{i_1} \wedge \dots \wedge e_{i_k}, \ I=\{ i_1, \dots, i_k\} \subset \{ 1, \dots, \ell \}$ be a vector of the standard basis for $\wedge^* W$. Then $e_I$ is of the weight $\frac{1}{2} \left(\sum_{i \in I} \epsilon_i - \sum_{j \notin I} \epsilon_j \right)$ and spans the associated one dimensional weight subspace. These are at the same time the only weights of spinor representations. It is clear that $e_{ \{ 1,2, \dots, \ell-1 \} }$ represents a highest weight vector of weigh $\omega_{\ell-1} $, while $e_{ \{1,2, \dots, \ell-1, \ell \} }$ is a highest weight vector of weight $\omega_{\ell}$. 

In the reminder of the text we will denote the weight vectors with $e_{\Sigma} = w_{\Sigma}$ where $\Sigma \subset \{1, \dots, \ell \}$. The irreducible $\g$-modules of weight $\omega_{\ell-1}$ and $\omega_{\ell}$ are on the top of the $\gtl$-modules $L(\lam)$ and $L(\llam)$ modules with respect to the gradation \eqref{gradacija}. Using \eqref{formula} we can easily deduce that in the lattice construction weight vectors $w_{\Sigma}$ are of the form $1 \otimes e^{\mu} = e^{\mu}$ where $\mu$ denotes their weight, $\mu = \frac{1}{2} \left( \sum_{i \in \Sigma} \epsilon_i - \sum_{j \notin \Sigma} \epsilon_j \right)$. 

A simple calculation shows that spinor representations weights satisfy the commutativity condition \eqref{uvjet_kom} if and only if $1 \in \Sigma$, i.e. if
$$\mu = \frac{1}{2} \left(  \epsilon_1 + \dots \right)$$
Therefore vertex operators $\mathcal{Y}(1 \otimes e^{\mu},z) = \mathcal{Y}(w_{\Sigma},z)$ commute with the action of $\gtl_1$ if and only if $1 \in \varGamma$ and from now on we will assume $1 \in  \Sigma$ and replace $\Sigma$ with $\Gamma_1$. 

\section{Vertex operator relations}\label{uvjeti_razlike} In this section we find the relations between vector fields. We first observe the identities we have on vacuum vector, i.e. $v_{\Lambda_0}$. Using \eqref{formula} we see that
\begin{lemma} \label{vakuum0}
\begin{align} 
&  \label{vakuum1} x_{\delta}(-1)x_{\gamma}(-1) v_{\Lambda_0} = 0 \qquad \textrm{if} \quad \delta \neq \underline{\gamma} , \\
&  \label{vakuum2} x_{\underline{\gamma}}(-1) x_{\gamma}(-1) v_{\Lambda_0} = C \cdot x_{\underline{\delta}}(-1) x_{\delta}(-1) v_{\Lambda_0} = C' \cdot e^{2 \omega} v_{\Lambda_0} \quad \textrm{and} \\
& \label{vakuum3} x_{\tau}(-1) x_{\delta}(-1)x_{\gamma}(-1) v_{\Lambda_0} = 0
\end{align}
for $\gamma, \delta, \tau \in \Gamma.$ 
\end{lemma}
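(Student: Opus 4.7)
The plan is to read off each operator $x_{\alpha}(-1)$ as the coefficient of $z^{0}$ in the vertex operator $x_{\alpha}(z)=Y(e^{\alpha},z)$, and then to use the leading-term description of $Y(e^{\lambda},z)e^{\nu}$. Although \eqref{formula} is stated for $\mathcal{Y}$, the same computation, using \eqref{vertex} directly, gives the analogous fact for $Y$: for $\lambda\in R$ and $\nu\in P$,
\begin{equation*}
Y(e^{\lambda},z)\,e^{\nu}\;\in\; z^{\langle\lambda,\nu\rangle}\,V_{P}[[z]],
\end{equation*}
with leading coefficient a nonzero scalar multiple of $e^{\lambda+\nu}$. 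Consequently $x_{\alpha}(-1)\,e^{\nu}=0$ whenever $\langle\alpha,\nu\rangle>0$, and $x_{\alpha}(-1)\,e^{\nu}=C\cdot e^{\alpha+\nu}$ for some nonzero $C$ when $\langle\alpha,\nu\rangle=0$.

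First I would compute pairings inside $\Gamma$: writing $\gamma=\epsilon_{1}+s_{i}\epsilon_{i}$ and $\delta=\epsilon_{1}+s_{j}\epsilon_{j}$ with $s_{i},s_{j}\in\{\pm 1\}$, one has $\langle\gamma,\delta\rangle\in\{0,1,2\}$, with $\langle\gamma,\delta\rangle=0$ exactly when $\delta=\underline{\gamma}$ and $\langle\gamma,\delta\rangle=2$ exactly when $\delta=\gamma$. Applying $x_{\gamma}(-1)$ to $v_{\Lambda_{0}}=\vac$ produces (a nonzero scalar multiple of) $e^{\gamma}$. To obtain \eqref{vakuum1} I then apply $x_{\delta}(-1)$: if $\delta\neq\underline{\gamma}$ then $\langle\delta,\gamma\rangle>0$, so by the leading-term description the coefficient of $z^{0}$ in $Y(e^{\delta},z)e^{\gamma}$ is $0$. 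If instead $\delta=\underline{\gamma}$ then $\langle\delta,\gamma\rangle=0$ and the constant term equals $C'\,e^{\delta+\gamma}=C'\,e^{2\omega}$, since $\gamma+\underline{\gamma}=2\epsilon_{1}=2\omega$ for every $\gamma\in\Gamma$. Because the resulting weight $2\omega$ does not depend on the choice of $\gamma\in\Gamma$, the vectors $x_{\underline{\gamma}}(-1)x_{\gamma}(-1)v_{\Lambda_{0}}$ are all proportional, which is \eqref{vakuum2}.

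For \eqref{vakuum3} I would reduce to the two-factor case already handled: by \eqref{vakuum1} and \eqref{vakuum2}, $x_{\delta}(-1)x_{\gamma}(-1)v_{\Lambda_{0}}$ is either zero or a scalar multiple of $e^{2\omega}$, so it is enough to prove $x_{\tau}(-1)e^{2\omega}=0$ for every $\tau\in\Gamma$. Since $\langle\tau,\omega\rangle=1$ by the definition of $\Gamma$, one has $\langle\tau,2\omega\rangle=2>0$, hence $Y(e^{\tau},z)e^{2\omega}\in z^{2}V_{P}[[z]]$ and its $z^{0}$-coefficient vanishes, giving \eqref{vakuum3}.

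The only mildly delicate point in carrying this out is the bookkeeping of the cocycle scalars $\epsilon(\cdot,\cdot)$ appearing in the normal-ordered expansion of \eqref{vertex}, but since the three identities assert either the vanishing or the proportionality (with an unspecified nonzero constant) of vectors of a fixed $\h$-weight, this bookkeeping does not affect the statement and can be absorbed into the constants $C$, $C'$.
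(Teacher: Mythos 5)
Your argument is correct and is essentially the paper's own proof: both read off $x_{\alpha}(-1)$ as the $z^{0}$-coefficient of the vertex operator and use the leading-term behaviour $Y(e^{\lambda},z)e^{\nu}\in z^{\langle\lambda,\nu\rangle}V_{P}[[z]]$ (the paper invokes \eqref{formula} for $\mathcal{Y}$, which differs from $Y$ only by scalars irrelevant here) together with the same inner-product computations $\langle\delta,\gamma\rangle\geq 1$ for $\delta\neq\underline{\gamma}$, $\langle\underline{\gamma},\gamma\rangle=0$, and $\langle\tau,2\omega\rangle=2$. No substantive difference.
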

\begin{proof}
Let us show relation \eqref{vakuum1}. We first analyse   $\mathcal{Y}(e^{\gamma},z)v_{\Lambda_0}$. Since $\langle \gamma, \omega_0 \rangle = 0$, formula \eqref{formula} implies that minimal $j$ such that $x_{\gamma}(-j)v_{\Lambda_0} \neq 0$ is $j=1$. We get:
\begin{align}
x_{\gamma}(-1) v_{\Lambda_0} = C \cdot e^{\gamma + \omega_0} = C \cdot e^{\gamma}.
\end{align}
Now we observe $\mathcal{Y}(e^{\delta},z)e^{\gamma}$ for $\delta \neq \underline{\gamma}$. Since $\langle \delta, \gamma \rangle \geq 1$, minimal $j$ such that $x_{\delta}(-j)e^{\gamma} \neq 0$ is $j=2$. Therefore we have $x_{\delta}(-1)e^{\gamma} = 0$, i.e. $x_{\delta}(-1)x_{\gamma}(-1) v_{\Lambda_0} = 0 $.

If, on the other hand, $\delta = \underline{\gamma}$, then $\langle \underline{\gamma}, \gamma \rangle = 0$ and minimal $j$ such that $x_{\underline{\gamma}}(-j)e^{\gamma} \neq 0$ is $j=1$. We have
$$x_{\underline{\gamma}}(-1)x_{\gamma}(-1) v_{\Lambda_0} = C \cdot x_{\underline{\gamma}}(-1)e^{\gamma} = C' \cdot e^{\gamma + \underline{\gamma}} = C' \cdot e^{2 \omega} v_{\Lambda_0} $$
i.e. we got the relation \eqref{vakuum2}. Since $x_{\delta}(-1)x_{\gamma}(-1) v_{\Lambda_0} = 0$ if $\delta \neq \underline{\gamma}$, the only claim left to prove is the relation 
$$x_{\tau}(-1) x_{\underline{\gamma}}(-1) x_{\gamma}(-1) v_{\Lambda_0} = 0.$$
Formula \eqref{formula} implies again that minimal $j$ such that $x_{\tau}(-j) x_{\underline{\gamma}}(-1) x_{\gamma}(-1) v_{\Lambda_0} \neq 0$ must be $j=2$ because $\langle \tau, 2 \omega \rangle = 2$. This completes the proof.
\end{proof}

Since algebra $\gtl_1$ is commutative, for $\gamma, \delta \in \Gamma$ vertex operator $Y(x_{\gamma}(-1)x_{\delta}(-1)v_{\Lambda_0},z)$ equals to the product of $x_{\gamma}(z)$ and $x_{\delta}(z)$ as ordinary Laurent series (cf. \cite{DL}). In this way we get the relations between vertex operators on level 1 modules.
\begin{theorem} Let $\gamma, \delta \in \Gamma$. We have:
\begin{eqnarray}\label{relacije}
& & \label{rel1} x_{\delta}(z)x_{\gamma}(z)  = 0 \qquad \textrm{if} \quad \delta \neq \underline{\gamma}, \\
& & \label{rel2} x_{\underline{\gamma}}(z) x_{\gamma}(z)  = C \cdot x_{\underline{\delta}}(z) x_{\delta}(z) \qquad \textrm{and}\\
& & \label{rel3} x_{\tau}(z) x_{\underline{\gamma}}(z) x_{\gamma}(z)  = 0.
\end{eqnarray}
\end{theorem}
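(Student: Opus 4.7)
The plan is to transport the three vacuum identities of Lemma~\ref{vakuum0}, which live in $L(\Lambda_0)=V_Q$, into identities of formal Laurent series in $(\mathrm{End}\,V_P)\{z\}$. The bridge is the remark immediately preceding the theorem: for mutually commuting fields the vertex operator attached to a product of states equals the corresponding product of fields in the literal sense, without any normal-ordering subtlety.

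First I would record the commutativity step. Because $\gtl_1$ is abelian, $[x_\gamma(m),x_\delta(n)]=0$ for all $\gamma,\delta\in\Gamma$ and $m,n\in\Z$, so the formal series $x_\gamma(z_1)$ and $x_\delta(z_2)$ commute strictly as operators on $V_P$, not merely modulo a delta distribution. In particular the operator product has no poles on the diagonal, so the equal-argument product $x_\gamma(z)x_\delta(z)$ is well defined, and the iterate formula in the vertex operator algebra $V_Q$ specializes to
\[
Y\bigl(x_\gamma(-1)x_\delta(-1)v_{\Lambda_0},z\bigr)=x_\gamma(z)x_\delta(z),
\]
and, by iteration,
\[
Y\bigl(x_\tau(-1)x_{\underline{\gamma}}(-1)x_\gamma(-1)v_{\Lambda_0},z\bigr)=x_\tau(z)x_{\underline{\gamma}}(z)x_\gamma(z).
\]

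With this identification in hand, I apply $Y(\cdot,z)$ to each line of Lemma~\ref{vakuum0}. Linearity of $Y$ together with $Y(0,z)=0$ turns \eqref{vakuum1} into \eqref{rel1} and \eqref{vakuum3} into \eqref{rel3}, while the proportionality in \eqref{vakuum2} becomes \eqref{rel2} with the same constant $C$. Because every $Y(v,z)$ with $v\in V_Q$ acts on the whole module $V_P=L(\Lambda_0)\oplus L(\Lambda_1)\oplus L(\lam)\oplus L(\llam)$, the three relations hold on each level one standard module simultaneously. I do not expect any serious obstacle here; the combinatorial input has already been extracted in Lemma~\ref{vakuum0}, and the remaining argument is essentially a linearity statement combined with the collapse of normal ordering for commuting fields.
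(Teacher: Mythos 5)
Your proposal is correct and follows essentially the same route as the paper: the paper's entire argument is that, because $\gtl_1$ is commutative, $Y(x_{\gamma}(-1)x_{\delta}(-1)v_{\Lambda_0},z)$ equals the ordinary product $x_{\gamma}(z)x_{\delta}(z)$ (citing \cite{DL}), and the relations then follow by applying $Y(\cdot,z)$ to the vacuum identities of Lemma \ref{vakuum0}. You merely spell out the intermediate steps (strict locality, absence of diagonal poles, the iterate formula, and the fact that $V_P$ is a $V_Q$-module so the identities hold on all four level one modules) that the paper leaves implicit.
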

\section{Difference and initial conditions for level 1 modules}\label{pocetak1}
Now, using relations between fields from the previous section, we reduce the spanning set \eqref{PBW} of the Feigin-Stoyanovsky's type subspaces of level 1 modules, i.e. $L(\Lambda_0)$, $L(\Lambda_1)$, $L(\Lambda_{\lam})$ and $L(\llam)$. We further observe which factors of $-1$ degree, i.e. $x_{\gamma}(-1)$ factors, annihilate the highest weight vectors or can be expressed as a sum of greater monomials on this vectors. 
\subsection{Difference conditions}
The coefficients of $z^{n-2}$ in the equalities \eqref{rel1} and \eqref{rel2} give us the relations between the infinite sums of monomials from $U(\gtl_1)$:
\begin{align}
0 &= x_{\delta}(z)x_{\gamma}(z) = \sum_n \left( \sum_{p+s=n}  x_{\delta}(-p)x_{\gamma}(-s) \right) z^{n-2} = \label{rell1} \\ &= \sum_n \left( \sum_p x_{\delta}(-n+p) x_{\gamma}(-p) \right) z^{n-2} \notag 
\end{align}
and
\begin{align}
0 &= x_{\underline{\gamma}}(z)x_{\gamma}(z) - C \cdot x_{\underline{\delta}}(z)x_{\delta}(z)  \label{rell2} = \\ &= \sum_n \left( \sum_{p} (x_{\underline{\gamma}}(-n+p)x_{\gamma}(-p) - C \cdot x_{\underline{\delta}}(-n+p)x_{\delta}(-p) ) \right) z^{n-2} \notag. 
\end{align}

Therefore 
\begin{align}
& \label{rell3} \sum_p x_{\delta}(-n+p)x_{\gamma}(-p)=0 \qquad \textrm{and} \\ & \sum_{p} (x_{\underline{\gamma}}(-n+p)x_{\gamma}(-p) - C \cdot x_{\underline{\delta}}(-n+p)x_{\delta}(-p) ) =0
\end{align}
for every $n$. The minimal monomials of this equalities, with regard to order introduced in Section \ref{order}, can now be expressed as the sum of other terms and so excluded from the spanning set. Such minimal monomials are called \textit{the leading terms}. 

For instance, relation \eqref{rell3} gives us the following leading terms for $\delta \neq \underline{\gamma}$:
\begin{align}
& x_{\delta}(-j)x_{\gamma}(-j)  \qquad \textrm{if $n$ is even and} \\
& x_{\delta}(-j-1)x_{\gamma}(-j)  \qquad \textrm{if $\delta \geq  \gamma$ and $n$ is odd}. 
\end{align}
This is clear when we remember that we first compare shapes and then colors of the monomials.

If the monomial $x(\pi)$ contains a leading term, this leading term can be replaced with an appropriate sum of greater monomials and so, using Proposition \ref{uredjaj}, it follows that $x(\pi)$ itself can be expressed as the sum of greater monomials. Therefore we conclude that monomials which contain the leading terms can be excluded from the spanning set. Monomials which don't contain the leading terms are said to satisfy \textit{the difference conditions} (or $DC$ in short) for $W(\Lambda)$.

These monomials can be described as:
\[ x_{\delta}(-i) x_{\gamma}(-j) \ \textrm{satisfies $DC$ if} \ \left\{ \begin{array}{lll} -i \leq -j-2, & \\ -i= -j-1 & \mbox{and $\delta < \gamma$ or} \\ & \mbox{$\delta = \gamma_{\ell}$ and $\gamma=\gamma_{\underline{\ell}},$} \\ -i=-j & \mbox{and $\delta = \gamma_{\underline{2}}$, $\gamma=\gamma_2$.} \end{array} \right. \]

Let $b_r$ (resp. $b_{\underline{r}}$) be the number of $x_{\gamma_r}(-j-1)$ (resp. $x_{\gamma_{\underline{r}}}(-j-1)$), and $a_r$ (resp. $a_{\underline{r}}$) of $x_{\gamma_r}(-j)$ (resp. $x_{\gamma_{\underline{r}}}(-j)$) factors in monomial $x(\pi)$. Then the difference conditions can be also written as:
\begin{itemize}
\item[1)] $b_r + b_{r-1} + \dots + b_2 + a_{\underline{2}} + \dots
+ a_{\underline{\ell}} +  a_\ell + \dots + a_{r+1}     \leq 1, \quad r =
2, \dots, \ell-1$, \item[2)] $b_r + \dots + b_2 + a_{\underline{2}} +
\dots+a_{\underline{r-1}} + a_{\underline{r+1}} \dots+
a_{\underline{\ell}} +  a_\ell + \dots + a_r \leq 1$,
\quad $r = 2, \dots, \ell-1$,
\item[3)] $b_\ell + b_{\ell-1} + \dots + b_2 + a_{\underline{2}} + \dots
+ a_{\underline{\ell-1}} +  a_\ell \leq 1$, \item[4)] $b_{\underline{\ell}}
+ b_{l-1} + \dots + b_2 + a_{\underline{2}} + \dots +
a_{\underline{\ell-1}} +  a_{\underline{\ell}} \leq 1$, \item[5)]
$b_{\underline{r+1}} + \dots + b_{\underline{\ell}} + b_{\ell} + \dots +
b_2 + a_{\underline{2}} + \dots + a_{\underline{r}} \leq 1$,
\quad $r = \ell-1, \dots, 2$,
\item[6)] $b_{\underline{r}} + b_{\underline{r+1}} \dots +
 + b_{r+1} + b_{r-1} + \dots +
b_2 + a_{\underline{2}} + \dots + a_{\underline{r}} \leq 1$,
\quad $r = \ell-1, \dots, 2$.
\end{itemize}
The proof is straightforward. 

\subsection{Initial conditions}\label{pocetni_uvjeti} We further reduce the spanning set by observing that certain factors $x_{\delta}(-1)$ annihilate some highest weight vectors and as such can not be contained in the monomial basis. Further on, some monomials of the shape $(-1)(-1)$ can be expressed via greater monomials on highest weight vector $v_{\Lambda_0}$. The following lemma describes this cases:
\begin{lemma} \label{poc_uvj_pom} Let $v_{\Lambda_i}$ be the highest weight vector of module $L(\Lambda_i)$, $i=0,1,\ell-1, \ell$. We have:
\begin{itemize}
 \item[a)] $x_{\underline{\gamma}}(-1)x_{\gamma}(-1)v_{\Lambda_0} = C \cdot x_{\gamma_{\underline{2}}}(-1)x_{\gamma_2}(-1)v_{\Lambda_0} \quad$ for $\gamma \in \Gamma$,
\item[b)] $x_{\tau}(-1)x_{\delta}(-1)x_{\gamma}(-1)v_{\Lambda_0}=0 \quad$ for $\tau, \delta, \gamma \in \Gamma$,
\item[c)] $x_{\gamma}(-1)v_{\Lambda_{\ell-1}} = 0 \quad$ if and only if $\gamma \in \{\gamma_2, \dots, \gamma_{\ell-1}, \gamma_{\underline{\ell}} \}$,
\item[d)] $x_{\gamma}(-1)v_{\Lambda_{\ell}} = 0 \quad$ if and only if $\gamma \in \{\gamma_2, \dots, \gamma_{\ell-1}, \gamma_{\ell} \}$,
\item[e)] $x_{\gamma}(-1)v_{\Lambda_1}=0 \quad$ for $\gamma \in \Gamma$.
\end{itemize}
\end{lemma}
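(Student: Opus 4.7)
Parts (a) and (b) are essentially restatements of what was already proved in Lemma~\ref{vakuum0}: (a) is the content of~\eqref{vakuum2}, since the right-hand side there is independent of which $\gamma \in \Gamma$ one picks (up to a scalar), so in particular we may take $\gamma = \gamma_2$; and (b) is exactly~\eqref{vakuum3}. So I would simply invoke Lemma~\ref{vakuum0} for those two cases and move on.

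The substantive parts are (c), (d), (e). My plan is to read them off directly from the lattice construction via formula~\eqref{formula}. Recall that in the realizations
$$ L(\Lambda_1) \cong e^{\omega_1}V_Q,\quad L(\lam) \cong V_Q e^{\omega_{\ell-1}},\quad L(\llam) \cong V_Q e^{\omega_{\ell}}, $$
the highest weight vector is $v_{\Lambda_i} = 1 \otimes e^{\omega_i}$. From~\eqref{formula} applied to $Y(e^{\gamma},z)e^{\omega_i}$, the smallest power of $z$ appearing in $x_{\gamma}(z)e^{\omega_i}$ is $z^{\langle \gamma, \omega_i\rangle}$; equivalently, the smallest $j \in \Z$ for which $x_{\gamma}(-j)v_{\Lambda_i} \neq 0$ is $j = \langle \gamma, \omega_i\rangle + 1$. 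Therefore $x_{\gamma}(-1)v_{\Lambda_i} = 0$ if and only if $\langle \gamma, \omega_i\rangle \geq 1$.

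The rest is an elementary inner-product check using $\omega_1 = \epsilon_1$, $\omega_{\ell-1} = \tfrac12(\epsilon_1 + \cdots + \epsilon_{\ell-1} - \epsilon_{\ell})$, $\omega_{\ell} = \tfrac12(\epsilon_1 + \cdots + \epsilon_{\ell-1} + \epsilon_{\ell})$, and the explicit form of $\Gamma$ with $\gamma_i = \epsilon_1 + \epsilon_i$, $\gamma_{\underline{i}} = \epsilon_1 - \epsilon_i$. One finds $\langle \gamma, \omega_1\rangle = 1$ for every $\gamma \in \Gamma$, which gives (e). For $\omega_{\ell}$ one has $\langle \gamma_i, \omega_{\ell}\rangle = 1$ for $i = 2,\dots,\ell$, while $\langle \gamma_{\underline{i}}, \omega_{\ell}\rangle = 0$ for $i = 2,\dots,\ell-1$ and $\langle \gamma_{\underline{\ell}}, \omega_{\ell}\rangle = 0$, giving (d). The computation for $\omega_{\ell-1}$ is analogous and yields (c), with $\gamma_{\underline{\ell}}$ taking over the role played by $\gamma_{\ell}$ in (d) because the sign of $\epsilon_{\ell}$ is flipped.

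There is no real obstacle here; the only thing to be careful about is the bookkeeping of which $\gamma \in \Gamma$ pairs to $0$ versus $1$ with each $\omega_i$, and in particular the asymmetry between the two spinor weights $\omega_{\ell-1}$ and $\omega_{\ell}$ that produces the single exceptional index in (c) and (d). All of (c)--(e) are then simultaneously \emph{if and only if} statements because~\eqref{formula} gives the exact lowest power, not merely an upper bound on vanishing.
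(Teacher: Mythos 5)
Your proposal is correct and matches the paper's own argument: parts a) and b) are cited from the vacuum relations of Lemma~\ref{vakuum0}, and parts c)--e) are read off from formula~\eqref{formula}, which gives the exact minimal $j$ with $x_{\gamma}(-j)v_{\Lambda_i}\neq 0$ as $j=\langle\gamma,\omega_i\rangle+1$, so the annihilation statements reduce to the same inner-product bookkeeping the paper performs.
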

\begin{proof}
Claims a) and b) have been proved in Section \ref{uvjeti_razlike}. We show c) using formula \eqref{formula}. Let $\gamma \in \{\gamma_2, \dots, \gamma_{\ell-1}, \gamma_{\underline{\ell}} \}$. Then $\langle \gamma, \omega_{\ell-1} \rangle = 1$ so the minimal $j$ such that $x_{\gamma}(-j)v_{\Lambda_{\ell-1}} \neq 0$ is $j=2$. This implies $x_{\gamma}(-1)v_{\lam}=0$. On the other hand, for $ \gamma \in \{ \gamma_{\ell}, \gamma_{\underline{\ell-1}}, \dots, \gamma_{\underline{2}} \}$ we have $\langle \gamma, \omega_{\ell-1} \rangle = 0$ so $x_{\gamma}(-1)v_{\lam} \neq 0$. This proves c) and d) is similar. It remains to show e). For all $\gamma \in \Gamma$, we have $\langle \gamma, \omega \rangle =1$ so again the minimal $j$ such that $x_{\gamma}(-j)v_{\Lambda_1} \neq 0$ must be $j=2$. Therefore $x_{\gamma}(-1)v_{\Lambda_1} = 0$ as claimed.
\end{proof}
We will say that monomial $x(\pi) \in S(\gtl^{-}_1)$ satisfies \textit{the initial conditions} for $W(\Lambda)$ (or on $v_{\Lambda}$) if it contains neither the $x_{\gamma}(-1)$ elements which annihilate $v_{\Lambda}$ nor the factors of the $(-1)(-1)$ shape which can be expressed as a sum of greater monomials on $v_{\Lambda}$.

Lemma \ref{poc_uvj_pom} gives us the initial conditions for Feigin-Stoyanovsky's subspaces of level 1 modules:
\begin{itemize}
\item[1)] for $\Lambda = \Lambda_0$, a monomial $x(\pi)$ satisfies the initial conditions if it either doesn't contain $x_{\gamma}(-1)$ elements or is of the form: 
\begin{itemize}
\item[1a)] $x(\pi) = \dots (-j)x_{\gamma}(-1)$, \ $j \geq 2$, or  
\item[1b)] $x(\pi) = \dots (-j)x_{\gamma_{\underline{2}}}(-1) x_{\gamma_2}(-1)$, \ $j \geq 2$,
\end{itemize}

 \item[2)] for $\Lambda = \lam$, a monomial $x(\pi)$ satisfies the initial conditions if it either doesn't contain $x_{\gamma}(-1)$ elements or is of the form: \\
$x(\pi) = \dots (-j)x_{\gamma}(-1)$, \ $j \geq 2$  for  $\gamma \in \{\gamma_{\ell}, \gamma_{\underline{\ell-1}}, \dots, \gamma_{\underline{2}} \}$,

\item[3)] for $\Lambda = \llam$, a monomial $x(\pi)$ satisfies the initial conditions if it either doesn't contain $x_{\gamma}(-1)$ elements or is of the form: \\
$x(\pi) = \dots (-j)x_{\gamma}(-1)$, \ $j \geq 2$  for  $\gamma \in \{\gamma_{\underline{\ell}}, \gamma_{\underline{\ell-1}}, \dots, \gamma_{\underline{2}} \}$,

\item[4)] for $\Lambda = \Lambda_1$, a monomial $x(\pi)$ satisfies the initial conditions if it doesn't contain $x_{\gamma}(-1)$ elements.

\end{itemize}
Those conditions can be also expressed using inequalities. Let $b_{i}$ denote the number of $x_{\gamma_i}(-1)$ factors in the monomial $x(\pi)$ and $b_{\underline{i}}$ the number of $x_{\gamma_{\underline{i}}}(-1)$ factors. Then $x(\pi)$ satisfies the initial conditions if:
\begin{itemize}
 \item[-] on $v_{\Lambda_0}$:

\begin{itemize}

\item[1)] $b_{\underline{3}} \dots + b_{\underline{\ell}} + b_{\ell} +
\dots + b_2  \leq 1$,

\item[2)] $b_{\underline{2}} + b_{\underline{3}} \dots +
b_{\underline{\ell}} + b_{\ell} + \dots + b_{3}  \leq 1$,

\end{itemize}

\item[-] on $v_{\Lambda_{\ell-1}}$:

\begin{itemize}
\item[1)] $b_{\underline{\ell}} + b_{\ell-1} + b_{\ell-2} + \dots + b_2 \leq 0 $,  \item[2)] $b_\ell + b_{\ell-1} +
\dots + b_2  \leq 1$, \item[3)] $b_{\underline{2}} + \dots +
b_{\underline{\ell}} + b_{\ell} + \dots + b_{3}  \leq 1$,
\end{itemize}

\item[-] on $v_{\Lambda_l}$:
\begin{itemize}
\item[1)] $b_\ell + b_{\ell-1} + b_{\ell-2} + \dots + b_2 \leq 0 $, \item[2)] $b_{\underline{\ell}} + b_{\ell-1} +
\dots + b_2  \leq 1$, \item[3)] $b_{\underline{2}} + \dots +
b_{\underline{\ell}} + b_{\ell} + \dots + b_{3}  \leq 1$,
\end{itemize}

\item[-] on $v_{\Lambda_1}$:

\begin{itemize}

\item[1)] $b_{\underline{3}} \dots + b_{\underline{\ell}} + b_{\ell} +
\dots + b_2   \leq 0$,
\item[2)] $b_{\underline{2}} +  \dots + b_{\underline{\ell}} + b_{\ell}
+ \dots + b_{3}  \leq 0$.
\end{itemize}
\end{itemize}
\begin{remark} \label{dc_ic}
We can memorize the initial conditions via the difference conditions if we add the so-called imaginary factors of degree zero. Then the $(-1)$ elements that can follow by difference conditions are exactly the ones allowed by the initial conditions. This will help us to define initial conditions for $W(\Lambda)$ for the level 2 modules. The rules for the adding of the imaginary factors of degree zero are:
\begin{itemize}
 \item[-] if $v_{\Lambda} = v_{\Lambda_0}$ then we don't add anything,
 \item[-] if $v_{\Lambda} = v_{\lam}$, we add $x_{\gamma_{\underline{\ell}}}(-0)$,
\item[-]if $v_{\Lambda} = v_{\llam}$, we add $x_{\gamma_{\ell}}(-0)$,
\item[-] if $v_{\Lambda} = v_{\Lambda_1}$, we add $x_{\gamma_{\underline{2}}}(-0) x_{\gamma_2}(-0)$.
\end{itemize}
\end{remark}
Now from Proposition \eqref{uredjaj} we get the following proposition:
\begin{proposition}\label{razap}
The set
\begin{align}
\label{razap1} \{ \ x(\pi)v_{\Lambda} \ \vert \ x(\pi) \ \textrm{satisfies DC and IC conditions for $W(\Lambda)$}  \ \}
\end{align}
spans $W(\Lambda)$.
\end{proposition}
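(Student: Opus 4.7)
The plan is to start with the PBW spanning set \eqref{PBW} and rewrite any monomial vector violating DC or IC as a linear combination of strictly larger monomial vectors (in the order of Section \ref{order}), iterating until every term in the expression satisfies both conditions. The crucial finiteness observation is that, for fixed total degree $N = \sum n_i$ and fixed number of factors $r$, the set of monomials in \eqref{PBW} is finite, hence the monomial order restricted to that set is a well-order and any strictly increasing chain terminates.

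For the DC reduction, suppose $x(\pi)$ contains a pair $x_\delta(-i)x_\gamma(-j)$ which is a leading term of relation \eqref{rell1} or \eqref{rell2}. I would extract the coefficient of $z^{n-2}$ with $n = i + j$ from the corresponding field identity and solve for this leading pair: this expresses it as a $\C$-linear combination of strictly greater pairs of the same total degree $n$ (either with the same colors, when using \eqref{rell1}, or with $\gamma,\underline{\gamma}$ exchanged for another opposite pair, when using \eqref{rell2}). Factoring out the remaining variables of $x(\pi)$ and applying Proposition \ref{uredjaj}, one obtains $x(\pi)$ as a linear combination of strictly larger monomials, all of the same total degree $N$ and number of factors $r$. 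Iterating within the finite set of such monomials terminates and produces an expression supported on DC-satisfying monomials.

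For the IC reduction, I would then apply Lemma \ref{poc_uvj_pom} to the rightmost $(-1)$-factors of a DC-satisfying monomial $x(\pi)$. If the rightmost factor $x_\gamma(-1)$ annihilates $v_{\Lambda}$ (cases c)--e) of the lemma), the vector $x(\pi)v_{\Lambda}$ vanishes and can be dropped. If a pair $x_{\underline{\gamma}}(-1)x_\gamma(-1)$ sits at the right end of $x(\pi)$ acting on $v_{\Lambda_0}$, part a) rewrites it as a scalar multiple of $x_{\gamma_{\underline{2}}}(-1)x_{\gamma_2}(-1)v_{\Lambda_0}$, which is strictly larger in the color order since $\gamma_{\underline{2}}$ and $\gamma_2$ are the extreme colors. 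Part b) disposes of any triple of $(-1)$-factors on $v_{\Lambda_0}$ by annihilation. Since each of these rewritings again strictly increases the monomial inside the finite fiber of fixed total degree and number of factors, the IC reduction also terminates and yields a combination of monomial vectors satisfying both DC and IC on $v_{\Lambda}$.

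The main obstacle is not the individual rewrite steps, which follow directly from the preceding lemmas and relations, but rather the bookkeeping needed to verify that each step strictly increases the monomial order, so that the combined DC/IC reduction terminates. The compatibility of the order with multiplication (Proposition \ref{uredjaj}) is the essential tool for propagating the strict inequality from a subpattern to the whole monomial, while the finiteness of monomials of fixed total degree and fixed number of factors guarantees that iteration halts. Together, these give the desired spanning property.
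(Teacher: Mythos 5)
Your argument is correct and is essentially the paper's (largely implicit) proof: leading terms from the relations \eqref{rell1}--\eqref{rell2} and the vacuum identities of Lemma \ref{poc_uvj_pom} are replaced by strictly greater monomials, Proposition \ref{uredjaj} propagates the strict inequality to the full monomial, and termination follows from the finiteness of monomials of fixed total degree and length; your explicit well-ordering/termination bookkeeping is actually more careful than what the paper writes. The only point worth adding is that the coefficient of $z^{n-2}$ is an infinite sum whose terms involving non-negative modes must be commuted through (using commutativity of $\gtl_1$) to annihilate $v_{\Lambda}$, so that only the finitely many strictly greater monomials in $U(\gtl_1^-)$ survive.
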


We will call the vectors which satisfy both the $DC$ and the $IC$ for $W(\Lambda)$ t\textit{he admissible vectors for $W(\Lambda)$.} 

\section{Basic $\gtl^{-}_1$-intertwining operators}\label{operatori_1}
We prove the linear independence of the vectors from Proposition \ref{razap} using the  $\gtl^{-}_1$-intertwining operators. First we will define so-called basic operators and their compositions will then give the operators we require. 

In Section \ref{spinor} we saw that the vertex operators $\mathcal{Y}(1 \otimes e^{\mu}, z) = \mathcal{Y}(w_{\Gamma_1}, z)$ commute with the action of $\gtl_1$ because $1 \in \Gamma_1$. We will analyse the action of certain coefficients of this operator on highest weigh vectors. 
\begin{lemma}
Let $w_{\Gamma_1} = 1 \otimes e^{\mu}$ be a weight vector of the spinor represenations on the top of the $\gtl$-moldules $L(\lam)$ and $L(\llam)$. For $n \geq 0$ there exists a coefficient of the intertwining operator  $\mathcal{Y}(w_{\Gamma_1},z) = \mathcal{Y}(1 \otimes e^{\mu},z)$, let's denote it as $w(n, \Gamma_1)$, such that 
\begin{align} \label{op1}
w(n, \Gamma_1) e^{n \omega}v_{\Lambda_0} = C_n \cdot e^{n \omega} w_{\Gamma_1}, \quad n \geq 0.
\end{align}
We call such coefficients the basic intertwining operators of the first group.
\end{lemma}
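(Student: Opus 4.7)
The plan is to read off $w(n,\Gamma_1)$ directly as a coefficient of the intertwining operator $\mathcal{Y}(w_{\Gamma_1},z)$, using only formula \eqref{formula}. First I note that in the lattice realization $e^{n\omega}v_{\Lambda_0}$ coincides, up to a nonzero scalar coming from the cocycle defining $e(n\omega)$, with the group algebra element $1\otimes e^{n\omega}$, so it suffices to analyse $\mathcal{Y}(1\otimes e^\mu,z)(1\otimes e^{n\omega})$.

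Since $1\in\Gamma_1$, the spinor weight
$$\mu=\frac{1}{2}\Bigl(\sum_{i\in\Gamma_1}\epsilon_i-\sum_{j\notin\Gamma_1}\epsilon_j\Bigr)$$
pairs with $\omega=\epsilon_1$ as $\langle\mu,\omega\rangle=\tfrac{1}{2}$, hence $\langle\mu,n\omega\rangle=n/2$. Then \eqref{formula} with $\lambda=\mu$ and $\nu=n\omega$ yields
$$\mathcal{Y}(w_{\Gamma_1},z)\,e^{n\omega}v_{\Lambda_0}= C'\cdot e^{\mu+n\omega}\,z^{n/2}+\text{(higher powers of }z\text{)},\qquad C'\neq 0.$$
I would then define $w(n,\Gamma_1)$ to be the coefficient of $z^{n/2}$ in the operator series $\mathcal{Y}(w_{\Gamma_1},z)$. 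By construction $w(n,\Gamma_1)\cdot e^{n\omega}v_{\Lambda_0}=C'\cdot 1\otimes e^{\mu+n\omega}$, and via the cocycle relations $e(n\omega)=e^{n\omega}\epsilon(\cdot,n\omega)$ and $\mathcal{Y}(v,z)=Y(v,z)e^{i\pi\mu}c(\cdot,\mu)$ from \eqref{vert_def} this expression is a nonzero scalar multiple of $e^{n\omega}w_{\Gamma_1}$, giving the claim with $C_n\neq 0$.

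The only step requiring attention is tracking these cocycle scalars and confirming that none vanishes; since $\epsilon(\cdot,\cdot)$ takes values in $\{\pm 1\}$ and $e^{i\pi\mu}$ contributes only a phase, this is routine bookkeeping. I do not anticipate a substantive obstacle: the existence of a nontrivial coefficient at exactly $z^{n/2}$ is forced by \eqref{formula} (this being the minimal power of $z$ that appears in the expansion, since $\langle\mu,n\omega\rangle=n/2$), and nonvanishing of $C_n$ reduces to the nonvanishing of a product of these cocycle factors. Note also that well-definedness of the fractional-power coefficient is not an issue: the factor $z^\alpha$ in the vertex operator formula \eqref{vertex} naturally produces the shifts by $\langle\mu,n\omega\rangle\in\tfrac{1}{2}\Z$ that allow $z^{n/2}$ to be an honest coefficient of the series acting on $e^{n\omega}v_{\Lambda_0}$.
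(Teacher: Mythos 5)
Your argument is correct and is essentially the paper's own proof: both extract the coefficient of $z^{\langle\mu,\,n\omega+\omega_0\rangle}=z^{n/2}$ from $\mathcal{Y}(w_{\Gamma_1},z)$ via formula \eqref{formula} and identify its image $C_n\cdot e^{\mu+n\omega}$ with $C_n\cdot e^{n\omega}w_{\Gamma_1}$. The extra bookkeeping of cocycle signs that you include is harmless but not needed beyond what the paper records.
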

\begin{proof}
Formula \eqref{formula} shows that the coefficient of $z^{\langle \mu, n \omega + \omega_0 \rangle } = z^{\frac{n}{2}}$ maps $e^{n\omega}v_{\Lambda_0}$ into $D_n \cdot e^{\mu + n \omega + \omega_0} = C_n \cdot e^{n \omega} w_{\Gamma_1}$ for $n \geq 0$.
\end{proof}

Now we define a notion of a complementary weight. If $\mu$ is the weight of the weight vector $w_{\Gamma_1}$, then its \textit{complementary weight $\mu'$} is the weight of vector $w_{\Gamma_1'}$ where $\Gamma_1' = \{1,2, \dots, \ell \ \}- \Gamma_1 \cup \{ 1\}$. In other words, we get the weight $\mu'$ by diverting all the signs of $\epsilon_i$ in $\mu$ except $\epsilon_1$ which remains of the positive sign. This implies immediately that $\mathcal{Y}(1 \otimes e^{\mu'}, z)$ commutes with the action of the $\gtl_1$ as well. We have the following lemma:

\begin{lemma}
Let $n	\geq 0$. Then the coefficient of $z^p$ (denote it with $w(n,\Gamma_1')$), where $p = \langle \mu', \mu + n \omega \rangle$, sends $e^{n \omega} w_{\Gamma_1}$ to $e^{n \omega} v_{\Lambda_1}$, i.e.
\begin{align}\label{op2}
w'(n,\Gamma_1')  e^{n \omega}w_{\Gamma_1} = C_n' \cdot e^{n \omega} v_{\Lambda_1}, \quad n \geq 0.
\end{align}
We call this $w'(n, \Gamma_1')$ coefficients the basic intertwining operators of the second group.
\end{lemma}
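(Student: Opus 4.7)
The plan is to mimic the proof of the preceding lemma for the first group of basic operators, just with $\mu'$ in place of $\mu$. Note first that $\mathcal{Y}(1\otimes e^{\mu'},z)$ commutes with the action of $\gtl_1$, hence its coefficients are bona fide $\gtl_1^-$-intertwining operators: this follows from the commutativity criterion at the end of Section \ref{spinor}, because by construction $1\in\Gamma_1'$.

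The first calculation I would carry out is the numerical identity $\mu+\mu'=\omega$. Writing $\mu=\tfrac{1}{2}\bigl(\sum_{i\in\Gamma_1}\epsilon_i-\sum_{j\notin\Gamma_1}\epsilon_j\bigr)$ and recalling that $\Gamma_1'$ is obtained from $\Gamma_1$ by flipping membership of every index $\geq 2$ while keeping $1\in\Gamma_1'$, one sees that all $\epsilon_i$-contributions for $i\geq 2$ cancel in $\mu+\mu'$, and the two $\tfrac{1}{2}\epsilon_1$ terms add up to $\epsilon_1=\omega$. Geometrically, $\mu'$ is exactly the shift that takes the spinor-lattice component of $\mu$ to the $L(\Lambda_1)$-component $\omega+Q$.

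With this in hand I would apply formula \eqref{formula} with $\lambda=\mu'$ and $\nu=n\omega+\mu$ to obtain
$$\mathcal{Y}(1\otimes e^{\mu'},z_0)\bigl(1\otimes e^{n\omega+\mu}\bigr)\in z_0^{p}\,V_P[[z_0]],$$
with $p=\langle\mu',n\omega+\mu\rangle$ and with the coefficient of $z_0^{p}$ equal to a nonzero scalar multiple of $1\otimes e^{\mu'+n\omega+\mu}=1\otimes e^{(n+1)\omega}=e^{n\omega}v_{\Lambda_1}$. Taking $w'(n,\Gamma_1')$ to be the coefficient of $z_0^{p}$ then yields the asserted identity with some nonzero constant $C_n'$.

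The only mildly delicate step is verifying $C_n'\neq 0$ after accounting for the cocycle twist in \eqref{vert_def}: the phase $e^{i\pi\mu'}$ acts on $1\otimes e^{n\omega+\mu}$ as the unit scalar $e^{i\pi\langle\mu',n\omega+\mu\rangle}$ and the Frenkel-Kac $2$-cocycle contributes a value in $\{\pm 1\}$, so the total prefactor is indeed nonzero. Compatibility with \eqref{intop} is automatic, since $\mu+\mu'=\omega=\omega_1$ together with $\mu\in\omega_i+Q$ (for $i\in\{\ell-1,\ell\}$ according to the parity of $|\Gamma_1|$) gives $\mu'+\omega_i\equiv\omega_1\pmod Q$, so the relevant coefficient does restrict to a map targeting $L(\Lambda_1)$. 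No conceptually new ingredients beyond those of the previous lemma are needed.
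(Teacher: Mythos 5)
Your proposal is correct and follows exactly the route the paper takes: its entire proof is the single line ``Directly from \eqref{formula},'' and your argument is precisely that application of \eqref{formula} with $\lambda=\mu'$, $\nu=n\omega+\mu$, spelled out. The additional checks you supply --- that $\mu+\mu'=\omega$ so the image lands on $e^{(n+1)\omega}=e^{n\omega}v_{\Lambda_1}$, and that the cocycle phase cannot kill the leading constant --- are exactly the details the paper leaves implicit, and they are all verified correctly.
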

\begin{proof}
Directly from \eqref{formula}.
\end{proof}
The operators from the second group, i.e. operators \eqref{op2}, have the following important property:
\begin{lemma}\label{svojstvo_op}
Let $w'(n, \Gamma_1^{(1)} \hspace{0.3mm} ' )$ be the basic intertwining operator of the second group such that $w'(n, \Gamma_1^{(1)} \hspace{0.3mm} ') e^{n \omega} w_{\Gamma_1^{(1)}} = C \cdot e^{n \omega} v_{\Lambda_1}$. Then $w'(n, \Gamma_1^{(1)} \hspace{0.3mm} ') e^{n \omega} w_{\Gamma_1^{(2)}} = 0$ if $\Gamma_1^{(1)} \neq \Gamma_1^{(2)}$. 
\end{lemma}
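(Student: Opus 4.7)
The plan is to read off the claim directly from the lowest-power estimate \eqref{formula} for the intertwining operator $\mathcal{Y}(e^{\mu_1'},z)$, where $\mu_1$ is the weight of $w_{\Gamma_1^{(1)}}$ and $\mu_1'$ its complementary weight, and to show that the coefficient $w'(n,\Gamma_1^{(1)}\,')$ is designed to ``catch'' precisely the leading term on $e^{n\omega}w_{\Gamma_1^{(1)}}$, so that on $e^{n\omega}w_{\Gamma_1^{(2)}}$ with $\Gamma_1^{(2)}\neq \Gamma_1^{(1)}$ its index is strictly below the lowest nonzero power of $z$, and hence it acts as zero.

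Concretely, let $\mu_2$ denote the weight of $w_{\Gamma_1^{(2)}}$, and recall that $e^{n\omega}w_{\Gamma_1^{(i)}} = e^{n\omega + \mu_i}$ in the lattice realization. By the definition of $w'(n,\Gamma_1^{(1)}\,')$, this operator is the coefficient of $z^p$ in $\mathcal{Y}(e^{\mu_1'},z)$, where $p = \langle \mu_1', \mu_1 + n\omega\rangle$. First I would observe the basic identity
\begin{align*}
\mu_1' = \epsilon_1 - \mu_1,
\end{align*}
which is immediate from the prescription ``flip all signs except that of $\epsilon_1$'' together with $1\in\Gamma_1^{(1)}$. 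Applying $\mathcal{Y}(e^{\mu_1'},z)$ to $e^{n\omega+\mu_2}$, formula \eqref{formula} gives
\begin{align*}
\mathcal{Y}(e^{\mu_1'},z)\,e^{n\omega+\mu_2} \;\in\; z^{\,\langle \mu_1',\, n\omega+\mu_2\rangle}\,V_P[[z]],
\end{align*}
so to conclude that $w'(n,\Gamma_1^{(1)}\,')\,e^{n\omega+\mu_2}=0$ it suffices to show
\begin{align*}
\langle \mu_1',\, n\omega+\mu_2\rangle \;>\; p = \langle \mu_1',\, n\omega+\mu_1\rangle,
\end{align*}
i.e.\ $\langle \mu_1',\, \mu_2-\mu_1\rangle > 0$.

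The hard (but really only mildly hard) part is this last inner-product inequality. Using $\mu_1' = \epsilon_1 - \mu_1$, together with $\langle \epsilon_1,\mu_i\rangle = \tfrac12$ for both $i=1,2$ (since $1\in\Gamma_1^{(1)}\cap\Gamma_1^{(2)}$), I would compute
\begin{align*}
\langle \mu_1',\, \mu_2-\mu_1\rangle &= \langle \epsilon_1,\mu_2-\mu_1\rangle - \langle \mu_1,\mu_2-\mu_1\rangle \\
&= 0 - \bigl(\langle\mu_1,\mu_2\rangle - \langle\mu_1,\mu_1\rangle\bigr) \;=\; \tfrac{\ell}{4} - \langle \mu_1,\mu_2\rangle.
\end{align*}
Writing spinor weights in the $\pm\tfrac12$ sign form $\mu_i = \tfrac12\sum_j \sigma_i(j)\epsilon_j$ with $\sigma_i(j)=\pm 1$, one has $\langle \mu_1,\mu_2\rangle = \tfrac14\bigl(\ell - 2\cdot\#\{j:\sigma_1(j)\neq\sigma_2(j)\}\bigr)$. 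Since $\Gamma_1^{(1)}\neq\Gamma_1^{(2)}$, there is at least one index of disagreement, so $\langle\mu_1,\mu_2\rangle \leq \tfrac{\ell}{4}-\tfrac12 < \tfrac{\ell}{4}$, and hence $\langle\mu_1',\mu_2-\mu_1\rangle > 0$, as required. Plugging this back into the lowest-power estimate completes the proof.
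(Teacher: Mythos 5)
Your proposal is correct and follows exactly the route the paper intends: the paper's own proof is the one-line remark ``Again by applying \eqref{formula}'', and your argument is precisely that application, with the lowest-power exponent $\langle \mu_1', n\omega+\mu_2\rangle$ compared against $p=\langle \mu_1', n\omega+\mu_1\rangle$ and the inner-product inequality $\langle\mu_1',\mu_2-\mu_1\rangle=\tfrac{d}{2}>0$ (with $d\geq 1$ the number of indices where $\Gamma_1^{(1)}$ and $\Gamma_1^{(2)}$ disagree) worked out explicitly. The identity $\mu_1'=\epsilon_1-\mu_1$ and the cancellation $\langle\epsilon_1,\mu_2\rangle=\langle\epsilon_1,\mu_1\rangle=\tfrac12$ are verified correctly, so this is a faithful, fully detailed version of the paper's proof.
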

\begin{proof}
Again by applying \eqref{formula}.
\end{proof}
\begin{remark} \label{grupe34}
Operators from \eqref{op1} and \eqref{op2} can be composed; we take the basic operator from the first group which sends $e^{m \omega} v_{\Lambda_0}$ to $C \cdot e^{m \omega} w_{\Gamma_1}$ and compose it with the basic operator from the second group which maps $e^{m \omega} w_{\Gamma_1}$ to $C' \cdot e^{m \omega} v_{\Lambda_1}$. In this way we obtain an intertwining operator, denote it with $w(m,m,0)$, such that 
\begin{align} \label{op3}
w(m,m,0)e^{m\omega}v_{\Lambda_0} = C'' \cdot e^{m \omega} v_{\Lambda_1}.
\end{align}
Let now $n > m$. Composing the operators from above we can now get the following map:
\begin{align*}
& e^{m \omega}v_{\Lambda_0} \overset{w(m,m,0)}{\mapsto} C_1 \cdot e^{m \omega} v_{\Lambda_1} = C_1' \cdot e^{(m+1) \omega} v_{\Lambda_0}  \overset{w(m+1,m+1,0)}{\mapsto} D_1 \cdot e^{(m+1)} v_{\Lambda_1} = \\ & = D_1' \cdot e^{(m+2)} v_{\Lambda_0} \overset{w(m+2,m+2,0)}{\mapsto} \dots \overset{w(n,n,0)}{\mapsto} D_{n-1} \cdot e^{(n-1) \omega} v_{\Lambda_1} = D_{n-1}' \cdot e^{n \omega} v_{\Lambda_0}. 
\end{align*}
So, for $n >m$ we obtained an intertwining operator, let's denote it with $w(m,n,0)$, such that
\begin{align} \label{op4}
w(m,n,0)e^{m\omega}v_{\Lambda_0} = C \cdot e^{n \omega} v_{\Lambda_0}.
\end{align}
This will be \textit{the basic operators of the third group}. 
\end{remark}

In further text we will need the following technical lemma: 

\begin{lemma}\label{lem_op_1} \

\begin{itemize} 
\item[i)] Let $\delta = \gamma_{\underline{j}} = \epsilon_1 - \epsilon_j $. Then $x_{\delta}(-1)w_{\Gamma_1} \neq 0$ if and only if $j \in \Gamma_1$. In this case, $x_{\delta}(-1)w_{\Gamma_1} = C \cdot e^{\omega} w_{\Gamma_1'}$ where $\Gamma_1' = \Gamma_1 - \{j \}$. 
\item[ii)] If $\delta = \gamma_j = \epsilon_1 + \epsilon_j $, then $x_{\delta}(-1)w_{\Gamma_1} \neq 0$ if and only if $j \notin \Gamma_1$ and then we have $x_{\delta}(-1)w_{\Gamma_1} =C \cdot e^{\omega} w_{\Gamma_1'}$ where $\Gamma_1' = \Gamma_1 \cup \{j \}$. 
\item[iii)] $x_{\tau}(-1) x_{\delta}(-1) w_{\Gamma_1} = 0$ for all $\tau, \gamma \in \Gamma$.
\end{itemize}
\end{lemma}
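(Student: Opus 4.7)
The plan is to compute everything directly from the lattice realization $w_{\Gamma_1} = 1 \otimes e^{\mu}$, with $\mu = \frac{1}{2}\left(\sum_{i \in \Gamma_1} \epsilon_i - \sum_{j \notin \Gamma_1} \epsilon_j\right)$, and to use formula \eqref{formula} to decide when the coefficient $x_\delta(-1)$ of $z^0$ in $Y(e^\delta,z)$ kills $e^{\mu}$ or produces the next lattice element $C \cdot e^{\delta + \mu}$. Throughout I will use the fact that $\langle \epsilon_1, \mu\rangle = \tfrac{1}{2}$ (since $1 \in \Gamma_1$) and $\langle \epsilon_j, \mu\rangle = \pm\tfrac{1}{2}$ according as $j \in \Gamma_1$ or $j \notin \Gamma_1$.

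For part (i), with $\delta = \gamma_{\underline{j}} = \epsilon_1 - \epsilon_j$, one computes $\langle \delta, \mu\rangle = \tfrac{1}{2} - \langle \epsilon_j,\mu\rangle$, which equals $0$ if $j \in \Gamma_1$ and equals $1$ if $j \notin \Gamma_1$. By \eqref{formula}, in the latter case the series $Y(e^\delta,z)e^\mu$ starts at $z^1$, so $x_\delta(-1) w_{\Gamma_1} = 0$. In the former case the leading term gives $x_\delta(-1) e^\mu = C \cdot e^{\delta + \mu}$. A quick check shows
$$\delta + \mu = (\epsilon_1 - \epsilon_j) + \mu = \epsilon_1 + \left(\mu - \epsilon_j\right) = \omega + \mu',$$
where $\mu'$ is the weight associated to $\Gamma_1' = \Gamma_1 - \{j\}$. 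Hence $x_\delta(-1)w_{\Gamma_1} = C \cdot e^\omega w_{\Gamma_1'}$, as claimed. Part (ii) is symmetric: with $\delta = \gamma_j = \epsilon_1 + \epsilon_j$ one gets $\langle \delta, \mu\rangle = \tfrac{1}{2} + \langle \epsilon_j,\mu\rangle$, which is $1$ when $j \in \Gamma_1$ (giving vanishing) and $0$ when $j \notin \Gamma_1$, and $\delta + \mu = \omega + \mu'$ for $\mu'$ corresponding to $\Gamma_1' = \Gamma_1 \cup \{j\}$.

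For part (iii) I would reduce to the nontrivial case: by parts (i)--(ii) either $x_\delta(-1) w_{\Gamma_1} = 0$ (done), or $x_\delta(-1) w_{\Gamma_1} = C \cdot e^\omega w_{\Gamma_1'} = C \cdot e^{\omega + \mu'}$ for some set $\Gamma_1' \ni 1$. Then compute
$$\langle \tau, \omega + \mu'\rangle = \langle \tau, \omega\rangle + \langle \tau, \mu'\rangle = 1 + \langle \tau, \mu'\rangle,$$
and since $\tau = \epsilon_1 \pm \epsilon_k$ gives $\langle \tau, \mu'\rangle \in \{0,1\}$, the total pairing is at least $1$. By \eqref{formula} the series $Y(e^\tau,z)e^{\omega + \mu'}$ starts at $z^1$ or higher, so its $z^0$-coefficient vanishes, and $x_\tau(-1) x_\delta(-1) w_{\Gamma_1} = 0$.

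I do not expect any serious obstacle here; this is essentially a bookkeeping computation. The only place to be careful is keeping track of the sign conventions in the identification $\mu \leftrightarrow \Gamma_1$, specifically that moving $j$ between $\Gamma_1$ and its complement shifts $\mu$ by $\pm \epsilon_j$, so that $\delta + \mu$ lands precisely on $\omega + \mu'$ for the modified subset. Once this identification is checked, all three parts follow immediately from \eqref{formula}.
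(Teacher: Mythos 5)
Your computation is correct and is exactly the argument the paper intends: the paper's own proof of this lemma is just the one-line remark ``Straightforward, using \eqref{formula},'' and your write-up supplies precisely that bookkeeping (pairing $\langle\delta,\mu\rangle\in\{0,1\}$ decides vanishing of the $z^0$-coefficient, and $\delta+\mu=\omega+\mu'$ identifies the image), so there is nothing to add.
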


\begin{proof}
Straightforward, using \eqref{formula}.
\end{proof}

We will call an array in a monomial of the shape $(-p) \dots (-j-2) (-j-1) (-j)$ (i.e. an array of factors whose degrees decrease by one) \textit{a successive block}. 

\begin{remark} \label{uzastopna} The above lemma can repeatedly be applied on such successive blocks. Namely, vectors $e^{\omega}v$ and $e(\omega)v$ (for $v = v_{\Lambda_0}, w_{\Gamma_1}$ or $v_{\Lambda_1}$) are collinear. In Section \ref{op_pr_struje} we saw that $e(\omega)$ lifts the degree of monomial factors by one when commuted to the left. So, we have $\dots x_{\delta}(-2) e^{\omega} w_{\Gamma_1} = \dots x_{\delta}(-2) \cdot C \cdot e(\omega) w_{\Gamma_1} $ and commuting $e(\omega)$ to the left gives $\dots C \cdot e(\omega) x_{\delta}(-1)w_{\Gamma_1}$ which enables us to apply Lema \ref{lem_op_1} again. We observe that the positive colors (see Section \ref{order}) in the successive block add the indices to $\Gamma_1$ (i.e. factor $x_{\gamma_i}(-j)$ from the successive block will add index $i$ to $\Gamma_1$). On the other hand, negative colors subtract the indices (i.e. factor $x_{\gamma_{\underline{i}}}(-j)$ will subtract index $i$ from $\Gamma_1$). The example below illustrates the principle.
\end{remark}

\begin{example}
\begin{align*}
& x_{\underline{2}}(-3)x_{\underline{3}}(-2)x_{\gamma_6}(-1)w_{12347} = x_{\underline{2}}(-3)x_{\underline{3}}(-2) \cdot C_1' \cdot e^{\omega}w_{123467} = \\
& x_{\underline{2}}(-3)x_{\underline{3}}(-2) \cdot C_1 \cdot e(\omega)w_{123467} = C_1 \cdot e(\omega)x_{\underline{2}}(-2)x_{\underline{3}}(-1) w_{123467} = \\
& C_1 \cdot e(\omega)x_{\underline{2}}(-2) \cdot C_2' e^{\omega} w_{12467} = \dots = C_1 C_2 C_3 w_{1267}.
\end{align*}
\end{example}

We have the following lemma:
\begin{lemma}\label{lem_op_2}
Let $x(\pi)$ be a successive monomial of the shape $(-j) \dots (-2)(-1)$ which satisfies $DC$ and $IC$ for $W(\llam)$ and let $x(\pi')$ be a different monomial of the same shape such that $x(\pi') > x(\pi)$. Then $x(\pi)v_{\llam} \neq C \cdot x(\pi')v_{\llam}$, i.e. those two vectors are not proportional. The same is true for $W(\lam)$.
\end{lemma}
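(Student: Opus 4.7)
The plan is to compute both $x(\pi) v_{\llam}$ and $x(\pi') v_{\llam}$ explicitly in the lattice-model realization of $L(\llam)$. By iterated application of Lemma \ref{lem_op_1} combined with the simple-current commutation described in Remark \ref{uzastopna}, applying a successive block $x_{\delta_j}(-j) \cdots x_{\delta_1}(-1)$ from right to left to $w_\Sigma$ amounts at each step $i$ to either inserting or deleting a single index from the running set $\Sigma^{(i-1)}$ (depending on whether $\delta_i$ is a positive or negative color and on whether the corresponding index is absent or present); after $i$ compatible steps the result is, up to a nonzero scalar, $e^{i\omega} w_{\Sigma^{(i)}}$, while an incompatible attempt yields $0$. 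Since $x(\pi)$ satisfies $DC$ and $IC$ for $W(\llam)$, every intermediate operation turns out to be compatible, so $x(\pi) v_{\llam}$ is a nonzero scalar multiple of $e^{j\omega} w_{\Sigma^{\mathrm{final}}}$ for a specific final set $\Sigma^{\mathrm{final}}$, and because the vectors $e^{j\omega} w_\Sigma$ for distinct $\Sigma$ are linearly independent in $V_P$, two nonzero outputs of this procedure are proportional only when they have the same $\Sigma^{\mathrm{final}}$.

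I then split into two cases. If $\pi$ and $\pi'$ have different multisets of colors, then $x(\pi) v_{\llam}$ and $x(\pi') v_{\llam}$ lie in distinct $\h$-weight subspaces of $L(\llam)$, so they cannot be proportional (the former being nonzero). If the multisets coincide, then $\pi'$ is a nontrivial permutation of $\pi$, and to obtain non-proportionality it suffices to show $x(\pi') v_{\llam} = 0$. Let $j_0$ be the rightmost position where $\pi$ and $\pi'$ differ; both blocks agree at positions $i < j_0$, hence both act on the same intermediate set $\Sigma^{(j_0-1)}$, and $\delta'_{j_0} > \delta_{j_0}$ with both colors drawn from the same remaining multiset at positions $\geq j_0$. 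The $DC$ structure of $\pi$ forces its colors to decrease strictly from right to left, with the only exception being locally allowed alternations between $\gamma_\ell$ at the left and $\gamma_{\underline{\ell}}$ at the right of adjacent positions; a short case analysis then shows that the only way to obtain a strictly larger $\delta'_{j_0}$ from the remaining multiset is to swap such an exceptional pair, in which case the attempted operation of $\delta'_{j_0}$ on $\Sigma^{(j_0-1)}$ is incompatible---it tries to insert the index $\ell$ into a set in which $\pi$'s step had required $\ell$ to be present, or dually.

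The main obstacle is precisely the exceptional $DC$ configuration involving the pair $\gamma_\ell, \gamma_{\underline{\ell}}$: this is the only source of orderings $\pi' > \pi$ that share the multiset of $\pi$, and handling it requires carefully tracking the toggling of the index $\ell$ in the evolving set $\Sigma^{(i)}$ to confirm that the swap really does break compatibility. The argument for $W(\lam)$ is identical; only the starting set $\Sigma^{(0)}$ and the $IC$ set for the rightmost color change.
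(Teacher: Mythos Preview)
Your argument is correct and uses the same computational engine as the paper---iterating Lemma~\ref{lem_op_1} together with Remark~\ref{uzastopna} to rewrite $x(\pi)v_{\llam}$ as a nonzero multiple of $e^{j\omega}w_{\Sigma}$---but your case split is organized differently. The paper splits according to the value of the color $\delta_i$ at the first differing position (the cases $\delta_i=\gamma_\ell$, $\delta_i=\gamma_{\underline{\ell}}$, $\delta_i\in\{\gamma_{\underline{\ell-1}},\dots,\gamma_{\underline{2}}\}$), and in the last case argues directly that the two resulting index sets $\Gamma_1'$ and $\Gamma_1''$ differ. You instead first separate by $\h$-weight (equivalently, by color multiset): distinct multisets give distinct weights and hence non-proportional vectors immediately, while equal multisets force the first discrepancy to be a $\gamma_{\underline{\ell}}\!\to\!\gamma_\ell$ swap, which kills $x(\pi')v_{\llam}$ by incompatibility of the index-$\ell$ toggle. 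This is a tidy reorganization that avoids the explicit set-comparison of the paper's case~c); the paper's route, on the other hand, does not need the preliminary verification that $x(\pi)v_{\llam}\neq 0$, since that is implicit in its step-by-step computation. One small remark: your ``or dually'' is superfluous---the dual swap would give $\tau_{j_0}<\delta_{j_0}$ and hence $\pi'<\pi$, so only the single case $\delta_{j_0}=\gamma_{\underline{\ell}}$, $\tau_{j_0}=\gamma_\ell$ actually occurs.
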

\begin{proof}
We will outline the proof of the lemma for $v_{\llam} = w_{123\dots \ell}$. Let $x(\pi) = x_{\delta_j}(-j) \dots x_{\delta_1}(-1)$, $x(\pi') = x_{\tau_j}(-j) \dots x_{\tau_1}(-1)$ and let $(-i)$ be the first index from right to left where $\delta_i \neq \tau_i$. Since $x(\pi') > x(\pi)$ then $\tau_i > \delta_i$. We have the following cases:
\begin{itemize}
 \item [a)] $\delta_i = \gamma_{\ell}.$ Monomial $x(\pi)$ satisfies $DC$ and $IC$ for $W(\llam)$ so the colors $\delta_{p}, \ p \in \{1, \dots, i-1 \}$ must be from the set $\{ \gamma_{\underline{\ell}}, \gamma_{\ell} \}$, i.e. we have (by repeated use of Lemma \eqref{lem_op_1}):
\begin{align*}
& x_{\delta_i}(-j) \dots x_{\delta_1}(-1)v_{\llam} = \\ & = x_{\gamma_{\ell}}(-i)x_{\gamma_{\underline{\ell}}}(-(i-1)) \dots x_{\gamma_{\ell}}(-2)x_{\gamma_{\underline{\ell}}}(-1)v_{\llam} = \\ & = x_{\gamma_{\ell}}(-i) \cdot C \cdot e^{(i-1) \omega}v_{\lam}
\end{align*}
Now, since $\tau_i > \delta_i$, it follows that $\tau_{i} \in \{ \gamma_2, \dots, \gamma_{\ell-1} \}$ and therefore $$x(\pi')v_{\llam}= \dots x_{\tau_i}(-i) \cdot C \cdot e^{(i-1) \omega}v_{\lam} = 0$$ again by Lemma \ref{lem_op_1} .
\item[b)] $\delta_i = \gamma_{\underline{\ell}}$. Similarly like in a) we prove that $x(\pi')v_{\llam}=0$.
\item[c)] $\delta_i \in \{ \gamma_{\underline{\ell-1}}, \dots, \gamma_{\underline{2}} \}$. By applying Lemma \ref{lem_op_1}, we first notice that vector $x_{\delta_{i-1}}(-(i-1))\dots x_{\delta_1}(-1) v_{\llam}$ equals to $C \cdot e^{(i-1) \omega} w_{\Gamma_1}$. Set $\Gamma_1$ is a subset of the set $\{1, 2, 3 , \dots, \ell \}$, i.e. we get $\Gamma_1$ by substracting some indices from $\{1, 2, 3 , \dots, \ell \}$. This indices correspond to the negative colors in the starting block $x_{\delta_{i-1}}(-(i-1)) \dots x_{\delta_1}(-1)$ of the monomial $x(\pi)$. 

Since two monomials $x(\pi)$ and $x(\pi')$ coincide up to $x_{\delta_i}(-i)$ factor, we also have $x(\pi')v_{\llam} = \dots x_{\tau_i} (-i) \cdot C \cdot e^{(i-1)\omega} w_{\Gamma_1}$. Now we distinguish two subcases:
\begin{itemize}
\item[c1)] $\tau_i = \gamma_j$ for some $j \in \{2, \dots, \ell \}$. If $x(\pi') v_{\llam} \neq 0$, then (according to Lemma \ref{lem_op_1}) $j \neq \Gamma_1$. Let $x(\pi)v_{\llam} = x_{\delta_m}(-m) \dots x_{\delta_i}(-i) \cdot C \cdot e^{(i-1)\omega} w_{\Gamma_1} = C' \cdot e^{m\omega} w_{\Gamma_1'}$. Because $\delta_i \in \{ \gamma_{\underline{\ell-1}}, \dots, \gamma_{\underline{2}} \}$ and $x(\pi)$ satisfies the DC, we have $\delta_s \in \{ \gamma_{\underline{\ell-1}}, \dots, \gamma_{\underline{2}} \}$ for $i \leq s \leq m$. Therefore we obtain the set $\Gamma_1'$ by subtracting $m-(i-1)$ different elements from $\Gamma_1$. If we now denote $x(\pi')v_{\llam} =x_{\tau_m}(-m) \dots x_{\tau_i} (-i) \cdot C \cdot e^{(i-1)\omega} w_{\Gamma_1} = C'' \cdot e^{m \omega} w_{\Gamma_1''}$, then $\Gamma_1''$ has more elements than $\Gamma_1'$ because $x_{\tau_i}(-i)$ factor added index $j$ in $\Gamma_1$.
\item[c2)] $\tau_i = \gamma_ {\underline{j}}$. Similarly as in c1), if $x(\pi')v_{\llam} \neq 0$, we prove $\Gamma_1 ' \neq \Gamma_1''$.
\end{itemize}
\end{itemize}
\end{proof}
For a highest weigh vector $v_{\Lambda}$ and each successive block \\ $x(\pi)= x_{\delta_j}(-j) \dots x_{\delta_2}(-2) x_{\delta_1}(-1)$, $j \in \N$, which satisfies $DC$ and $IC$ for $W(\Lambda)$ we will define a $\gtl_1^{-}$-intertwining operator $I_{\pi,\Lambda}$. We will call this operators \textit{the basic intertwinig operators} and we will use them in the proof of linear independence of admissible vectors. We have the following cases:
\begin{itemize}
 \item [a)]  $x(\pi) =  x_{\gamma_{i_j}}(-j) \dots x_{\gamma_{i_1}}(-1)$ where $\gamma_{i_s} \in \{ \gamma_2, \dots, \gamma_{\ell-1} \}$, $s=1, \dots, j$. Because of the initial conditions (see \ref{pocetni_uvjeti}), such monomial annihilates all highest weigh vectors except $v_{\Lambda_0}$. Now we define the following three operators for $x(\pi)v_{\Lambda_0}$ (see \eqref{op1}):
\begin{align} 
 \label{operator11} I^1_{\pi, \Lambda_0} & = w(0, \Gamma_1) \quad \textrm{i.e.} \\  \notag I^1_{\pi, \Lambda_0}v_{\Lambda_0} &= C \cdot w_{\Gamma_1}   \quad \textrm{where} \quad \Gamma_1 = \{1, 2, 3, \dots, \ell-1 \}  - \{i_1, \dots, i_j \},  \\ \label{operator12} I^2_{\pi, \Lambda_0} & =  w(0, \Gamma_1) \quad \textrm{i.e.} \\ \notag I^2_{\pi, \Lambda_0}v_{\Lambda_0} &=C \cdot w_{\Gamma_1} \quad \textrm{where} \quad \Gamma_1 = \{1,2, 3, \dots, \ell-1, \ell \} - \{ i_1, \dots, i_j \},  \\ \label{operator13} I^3_{\pi, \Lambda_0} & =  w(0, \Gamma_1) \quad \textrm{i.e.} \\ \notag I^3_{\pi, \Lambda_0}v_{\Lambda_0} &=C \cdot w_{\Gamma_1} \quad \textrm{where} \quad \Gamma_1 = \{1,2, 3, \dots,  i_j \} - \{ i_1, \dots, i_j \}.
\end{align}
Then we have $I^1_{\pi, \Lambda_0}x(\pi)v_{\Lambda_0} =  C \cdot e^{j \omega} v_{\lam}$, $I^2_{\pi, \Lambda_0}x(\pi)v_{\Lambda_0} =  C \cdot e^{j \omega} v_{\llam}$ and $I^3_{\pi, \Lambda_0}x(\pi)v_{\Lambda_0} =  C \cdot e^{j \omega} w_{123 \dots i_j}$. 
\item[b)] $x(\pi) =  x_{\delta_j}(-j) \dots x_{\delta_1}(-1)$ where $\delta_1 \leq \gamma_{\underline{\ell}}$. Then it follows from $DC$ that $\delta_p \in \{\gamma_{\ell}, \gamma_{\underline{\ell}}, \dots, \gamma_{\underline{2}} \}$ for $p \in \{2, \dots, j \}$. Such monomials don't annihilate $v_{\Lambda_0}$ and $v_{\llam}$. We first define $I_{\pi,\Lambda_0}$. This operator will be composition of the two following operators: 
\\
\noindent i) operator $I_{\pi1, \Lambda_0} = w(0, \{12 \dots \ell \})$ which maps $v_{\Lambda_0} \mapsto C \cdot v_{\llam}$ (see \eqref{op1}). \\
ii) operator $I_{\pi2, \Gamma_1} = w'(j,\Gamma_1')$ which belongs to the basic operators of the second group, i.e. to the operators which map $e^{j \omega} w_{\Gamma_1} \mapsto C \cdot e^{j \omega} v_{\Lambda_1}$ for some $j \geq 0$ and some $\Gamma_1 \subset \{1,2, \dots, \ell \}$ (see \eqref{op2}). Namely, after applying operator $I_{\pi1, \Lambda_0}$ we get: 
\begin{align*}
& I_{\pi1, \Lambda_0} (x_{\delta_j}(-j) \dots x_{\delta_1}(-1) v_{\Lambda_0})  = x_{\delta_j}(-j) \dots x_{\delta_1}(-1) I_{\pi_1, \Lambda_0} v_{\Lambda_0} = \\ & = C \cdot x_{\delta_j}(-j) \dots x_{\delta_1}(-1) v_{\llam} = C' \cdot e^{j \omega} w_{\Gamma_1}
\end{align*}
by Lemma \ref{lem_op_1}. Now $I_{\pi2, \Gamma_1}$ sends this $e^{j \omega} w_{\Gamma_1}$ to $e^{j \omega} v_{\Lambda_1}$. \\
\noindent The final $I_{\pi, \Lambda_0}$ for $x(\pi)v_{\Lambda_0}$ is then defined as $I_{\pi2, \Gamma_1} \circ I_{\pi1,\Lambda_0}$.
\vspace{2mm}

\noindent It remains to define $I_{\pi, \llam}$. We set simply $I_{\pi,\llam} = I_{\pi_2, \Gamma_1}$ since we have:
$$x(\pi)v_{\llam}  = x_{\delta_j}(-j) \dots x_{\delta_1}(-1) v_{\llam}  = C \cdot e^{j \omega}w_{\Gamma_1}.$$
\item[c)] $x(\pi) =  x_{\delta_j}(-j) \dots x_{\delta_2}(-2) x_{\gamma_{\ell}}(-1)$. Then by $DC$ $\delta_p \in \{\gamma_{\ell}, \gamma_{\underline{\ell}}, \dots, \gamma_{\underline{2}} \}$ for $p \in \{2, \dots, j \}$ and by $IC$ such monomial doesn't annihilate $v_{\Lambda_0}$ and $v_{\lam}$ (see \ref{pocetni_uvjeti}). We proceed as in b) and define first $I_{\pi, \Lambda_0}$ as $I_{\pi_2, \Gamma_1} \circ I_{\pi_1, \Lambda_0}$. Here $I_{\pi_1, \Lambda_0}v_{\Lambda_0} = C \cdot v_{\lam}$. Since we have
$$x(\pi)v_{\lam} =  x_{\delta_j}(-j) \dots x_{\delta_2}(-2) x_{\gamma_{\ell}}(-1) v_{\lam} = C \cdot e^{j \omega} w_{\Gamma_1}$$
we define $I_{\pi_2, \Gamma_1}$, as in ii), so that $I_{\pi_2, \Gamma_1} e^{j \omega} w_{\Gamma_1} = C' \cdot e^{j \omega} v_{\Lambda_1}$ 
\vspace{2mm}

\noindent For $x(\pi)v_{\lam}$ we set, as in b), $I_{\pi,\lam} = I_{\pi_2, \Gamma_1}$. 

\item[d)] $x(\pi) =  x_{\tau}(-1)x_{\delta}(-1)$. This monomial nontrivially acts only on $v_{\Lambda_0}$ and $IC$ say that in this case necessarily $\tau = \gamma_{\underline{2}}$ and $\delta = \gamma_2$. We set $I_{\pi, \Lambda_0}$ to be an identity operator.
\end{itemize}

The above defined operators have one important property, namely:
\begin{proposition}
For the operators defined in a) - e) the following holds: if $x(\pi') > x(\pi)$ then $I_{\pi, \Lambda}x(\pi')v_{\Lambda} =0$.
\end{proposition}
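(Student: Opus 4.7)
Since every basic operator $w(n,\Gamma_1)$, $w'(n,\Gamma_1')$, $w(m,n,0)$ is the coefficient of an intertwining operator $\mathcal{Y}(1\otimes e^{\mu},z)$ with $1\in\Gamma_1$, the criterion \eqref{uvjet_kom} holds, so by \eqref{komutator}--\eqref{komutiranje} each commutes with $\gtl_1$; the composition $I_{\pi,\Lambda}$ inherits this property. Hence
$$I_{\pi,\Lambda}\,x(\pi')v_\Lambda = x(\pi')\,I_{\pi,\Lambda}v_\Lambda,$$
and by construction $I_{\pi,\Lambda}v_\Lambda$ is a nonzero scalar multiple of a specific weight vector: $w_{\Gamma_1}$ in cases a) and c), $w'(j,\Gamma_1')v_{\llam}$ in case b), and $v_{\Lambda_0}$ in case d). The proposition thus reduces to showing that $x(\pi')$ annihilates this explicit vector, with $x(\pi')$ taken to be a monomial of the same $(-j)\cdots(-1)$ shape as $x(\pi)$ (or $(-1)(-1)$ in case d), paralleling the setting of Lemma \ref{lem_op_2}.

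If $s(\pi')>s(\pi)$, then because $\pi$ carries at most one factor per degree, $\pi'$ must contribute at least two factors at some degree $-j_0$. Placing these rightmost in $x(\pi')$ and commuting the accumulated $e(\omega)^{j_0-1}$ past them using \eqref{omega} (cf.\ Remark \ref{uzastopna}), the expression contains a factor of the form $x_{\sigma}(-1)x_{\tau}(-1)w_{\Gamma^{\ast}}$, which vanishes by Lemma \ref{lem_op_1}(iii); in case d) a shape increase over $(-1)(-1)$ produces three $(-1)$-factors on $v_{\Lambda_0}$, killed by Lemma \ref{vakuum0}(iii).

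If $s(\pi')=s(\pi)$, then $\pi'$ is a successive block of the same length, and there is a smallest $j_0$ with $\tau_{j_0}>\delta_{j_0}$ and $\tau_s=\delta_s$ for $s<j_0$. Applying the agreeing first $j_0-1$ factors to $I_{\pi,\Lambda}v_\Lambda$ yields, by the iterated use of Lemma \ref{lem_op_1} described in Remark \ref{uzastopna}, the intermediate vector $C\cdot e^{(j_0-1)\omega}w_{\Gamma_1^{(j_0-1)}}$ with an explicit subset $\Gamma_1^{(j_0-1)}$. I then verify that $x_{\tau_{j_0}}(-1)w_{\Gamma_1^{(j_0-1)}}=0$: in case a), DC forces $i_1<\cdots<i_j$, hence $\Gamma_1^{(j_0-1)}=\{1,\ldots,\ell-1\}\setminus\{i_{j_0},\ldots,i_j\}$, and any positive color $\tau_{j_0}=\gamma_k$ with $k<i_{j_0}$ satisfies $k\in\Gamma_1^{(j_0-1)}$, so the action vanishes by Lemma \ref{lem_op_1}(ii); no negative color exceeds a positive $\delta_{j_0}$, so this subcase is vacuous. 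Cases b) and c) are analogous: the DC-imposed color ranges again force any greater $\tau_{j_0}$ to be incompatible with $\Gamma_1^{(j_0-1)}$, and when the iteration nevertheless produces a nonzero intermediate of the form $e^{j\omega}w_{\Gamma_1''}$ with $\Gamma_1''\neq\Gamma_1$, the final factor $w'(j,\Gamma_1')$ of $I_{\pi,\Lambda}$ kills it by Lemma \ref{svojstvo_op}. Case d) is immediate: $x(\pi')=x_{\delta_2'}(-1)x_{\gamma_2}(-1)$ with $\delta_2'\neq\gamma_{\underline{2}}$ gives $x(\pi')v_{\Lambda_0}=0$ by Lemma \ref{vakuum0}(i).

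The main obstacle is the combinatorial bookkeeping in the color-comparison subcase for cases b) and c), where $\delta_{j_0}$ may be positive or negative: one must separately rule out every greater positive candidate for $\tau_{j_0}$ (whose index is forced into $\Gamma_1^{(j_0-1)}$ by the DC-imposed monotonicity of $\delta_s$) and every greater negative candidate (whose required missing index is prevented by the same DC), and in case b) further thread any residue through Lemma \ref{svojstvo_op} to eliminate surviving $e^{j\omega}w_{\Gamma_1''}$ with $\Gamma_1''\neq\Gamma_1$. Careful tracking of the evolving $\Gamma_1^{(s)}$ from $s=0$ to $s=j_0$, together with the color ordering $\gamma_2>\cdots>\gamma_\ell>\gamma_{\underline{\ell}}>\cdots>\gamma_{\underline{2}}$, is what makes this verification go through.
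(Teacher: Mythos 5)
Your proposal is correct and follows essentially the same route as the paper: commute the intertwining operator past the monomial, kill shape-increases via the repeated-degree block and Lemma \ref{lem_op_1}(iii), and in the equal-shape case track the evolving index set $\Gamma_1^{(s)}$ under Remark \ref{uzastopna} to show the first larger color acts as zero (case a)) or lands on a $w_{\Gamma_1''}$ with $\Gamma_1''\neq\Gamma_1$ annihilated by Lemma \ref{svojstvo_op} (cases b), c)), with case d) trivial. The only cosmetic difference is that you re-derive the content of Lemma \ref{lem_op_2} inline for cases b) and c) rather than citing it, and you unify the paper's two subcases of a) ($\tau_p$ a repeated color versus a fresh one) into the single observation $k<i_{j_0}\Rightarrow k\in\Gamma_1^{(j_0-1)}$.
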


\begin{proof}
Let us check this property for the operators defined in a), b) and d) as proof of c) is similar to the proof of b).
\begin{itemize}
 \item [a)] Let $x(\pi)=x_{\gamma_{i_j}}(-j)\dots x_{\gamma_{i_1}}(-1)$, $\gamma_{i_s} \in \{ \gamma_2, \dots, \gamma_{\ell-1} \}$ for $s=1, \dots, j$. We have $v_{\Lambda} = v_{\Lambda_0}$. We will all three operators, $I^1_{\pi, \Lambda_0}$, $I^2_{\pi, \Lambda_0}$ and $I^3_{\pi, \Lambda_0}$ denote as $I_{\pi, \Lambda_0}$. Let $x(\pi') > x(\pi)$. If $x(\pi')$ is greater because its shape is greater, then $x(\pi')$ contains at least two factors of the same degree, i.e. it contains a block of the shape $(-s)(-s)(-(s-1))$. Applying repeatedly Lemma \ref{lem_op_1} we get 
\begin{align*}
& I_{\pi,\Lambda_0} x(\pi') v_{\Lambda_0} = x(\pi')I_{\pi, \Lambda_0}v_{\Lambda_0} = x(\pi') \cdot C \cdot w_{\Gamma_1}= \\ & = \dots x_{\tau_{s+1}}(-s) x_{\tau_s}(-s) \cdot C' \cdot e^{(s-1) \omega} \cdot w_{\Gamma_1''} = 0.
\end{align*}
Let now $x(\pi') = x_{\tau_j}(-j) \dots x_{\tau_1}(-1)$ be of the same shape as $x(\pi)$ and let $x_{\tau_p}(-p)$, $1 \leq p \leq j$, be the first factor from right to left of color greater than $\gamma_{i_p}$. If $\tau_p \notin \{ \gamma_{i_j}, \dots, \gamma_{i_1} \}$ then $\tau_p = \gamma_s$ for some $s \notin \{i_1, \dots, i_j \}$ and this $s \in \Gamma_1$ (see \eqref{operator11}, \eqref{operator12} and \eqref{operator13}). So, by Lemma \ref{lem_op_1} ii) we have 
\begin{align*}
& x(\pi')I_{\pi, \Lambda_0} v_{\Lambda_0} = x(\pi')\cdot C \cdot w_{\Gamma_1}  = \dots x_{\tau_p}(-p) \cdot C' \cdot e^{(p-1) \omega} w_{\Gamma''} = \\ & = \dots x_{\gamma_s}(-p) \cdot C' \cdot e^{(p-1) \omega} w_{\Gamma''} = 0. 
\end{align*}
Namely, we obtained $\Gamma''$ by adding indices to $\Gamma_1$ (see Remark \ref{uzastopna}) and therefore $\Gamma''$ contains index $s$. \\
Now, let $\tau_p = \gamma_{i_r}$. Then necessarily $ r < p$ because $\tau_p > \gamma_{i_p}$. This means the same color appears twice in this first part of $x(\pi')$, i.e. $x(\tau_p)(-p) \dots x_{\tau_1}(-1)$. Lemma \ref{lem_op_1} ii) gives again $x(\pi')I_{\pi, \Lambda_0} v_{\Lambda_0} =0.$
\item[b)] If we observe $x(\pi)$ and $x(\pi')$ on $v_{\Lambda_0}$ then $I_{\pi, \Lambda_0} = I_{\pi2, \Gamma_1} \circ I_{\pi1, \Lambda_0}$ where $\Gamma_1 = \{1,2, \dots, \ell  \}$, i.e. $w_{\Gamma_1} = v_{\llam}$. Since operator $I_{\pi1,\Lambda_0}$ maps $v_{\Lambda_0}$ to $v_{\llam}$, it is enough to show our claim for vector $x(\pi)v_{\llam}$. Now, if $x(\pi')$ is of greater shape than $x(\pi)$ we again have $x(\pi')v_{\llam} =0$. If, on the other hand, $x(\pi)$ and $x(\pi')$ are of the same shape, they must differ in color. Applying Lemma \ref{lem_op_2} we have $x(\pi)v_{\llam} = C \cdot e^{j \omega} w_{\Gamma_1}$, $x(\pi')v_{\llam} = C' \cdot e^{j \omega} w_{\Gamma_1'}$ and $\Gamma_1 \neq \Gamma_1'$. Lemma \ref{svojstvo_op} then gives the required property.
\item[d)] Since $x(\pi) = x_{\gamma_{\underline{2}}}(-1)x_{\gamma_2}(-1)$ there is no greater monomial $x(\pi') > x(\pi)$ such that $x(\pi')v_{\Lambda_0} \neq 0$.
\end{itemize}
\end{proof}

Let $x(\pi)$ be a monomial from $U(\gtl_1)$. With $x(\pi)^{+n}$ we denote a monomial obtained from $x(\pi)$ by lifting the degree in all the factors by $n$. 

Now we can prove the decisive proposition for the proof of the linear independence:
\begin{proposition}\label{operatori}
Let $L(\Lambda)$ be a level 1 module, i.e. $\Lambda \in \{\Lambda_0, \Lambda_1, \Lambda_{\ell-1}, \Lambda_{\ell} \}$, and let $x(\pi)$ be a monomial that satisfies both $DC$ and $IC$ on $W(\Lambda)$ . 

If, on the other hand, $x(\pi) = \dots x_{\delta}(-1)$, then there exist a factorization of monomial $x(\pi)$, $x(\pi)=x(\pi_2)x(\pi_1)$ and an intertwining operator $I_{\pi_1, \Lambda}$ which commutes with the action of $\gtl_1$ so that the following holds:
\begin{itemize}
 \item[a)] $I_{\pi_1, \Lambda}x(\pi_1)v_{\Lambda} = C \cdot e^{n \omega} v_{\Lambda'} \neq 0$ for some $n$ and some $\Lambda'$,
 \item[b)] $I_{\pi_1, \Lambda}x(\pi_1')v_{\Lambda} = 0$ for $x(\pi_1') > x(\pi_1)$,
 \item[c)] $I_{\pi_1, \Lambda}x(\pi_1)v_{\Lambda}= C \cdot e^{n \omega} v_{\Lambda'}$ and $x(\pi_2)^{+n}$ satisfies $DC$ and $IC$ on $W(\Lambda')$.
\end{itemize}

If $x(\pi) = \dots x_{\delta_p}(-p)$ where $p \geq 2$, then there exists an intertwining operator $I_{\pi_0, \Lambda}$ such that:
\begin{itemize}
 \item[a)] $I_{\pi_0, \Lambda}x(\pi)v_{\Lambda} = x(\pi) I_{\pi_0, \Lambda} v_{\Lambda} = x(\pi) \cdot C \cdot e^{(p-1)\omega} v_{\Lambda_0}$,
 \item[b)] $I_{\pi_0, \Lambda}x(\pi_1')v_{\Lambda} = 0$ for $x(\pi_1') = \dots x_{\tau_m}(-m)$, where $m < p$.
\end{itemize}
We will call this operator \textit{a basic skip operator}.
\end{proposition}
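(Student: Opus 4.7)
The plan is to prove the two cases (rightmost factor at degree $-1$ versus at degree $-p$ with $p \geq 2$) separately.

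For the first case, I would peel off from the right end of $x(\pi)$ the largest tail that fits into one of the four configurations (a)--(d) used in the preceding construction of basic intertwining operators. Concretely, write $x(\pi) = x(\pi_2)\,x(\pi_1)$ with $x(\pi_1)$ equal either to the unique rightmost descending successive block $x_{\delta_j}(-j)\cdots x_{\delta_1}(-1)$ (classified by the color $\delta_1$ into one of (a), (b), (c)) or, on $v_{\Lambda_0}$ only, to the exceptional pair $x_{\gamma_{\underline{2}}}(-1)x_{\gamma_2}(-1)$ of case (d). The admissibility of $x(\pi)$ together with the IC pattern at level $-1$ forces exactly one such structure, so the factorization exists. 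I would then set $I_{\pi_1,\Lambda}$ to be the basic intertwining operator already attached to that configuration in Section \ref{operatori_1}. Property (a) is then the defining identity of $I_{\pi_1,\Lambda}$, and (b) is exactly the content of the preceding Proposition.

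The substantive point is (c). Since DC is a condition on differences of consecutive degrees together with level-by-level inequalities on the counts $a_r, b_r$, it is manifestly invariant under a uniform shift $x_\delta(-i)\mapsto x_\delta(-i+n)$; hence the internal DC of $x(\pi_2)^{+n}$ is inherited from that of $x(\pi_2)$. The global DC of $x(\pi)$, applied between the top $x_{\delta_j}(-n)$ of $x(\pi_1)$ and the bottom of $x(\pi_2)$, forces the bottom of $x(\pi_2)$ to lie at degree $\leq -n-1$, so $x(\pi_2)^{+n}\in U(\gtl_1^-)$ and its lowest factor sits at degree $\leq -1$. A short sweep through (a)--(d) then confirms the IC for $W(\Lambda')$: in each configuration the target $\Lambda'$ is determined by the specific $\Gamma_1$ chosen in defining $I_{\pi_1,\Lambda}$, and the color exposed at the new degree $-1$ of $x(\pi_2)^{+n}$ (if any) belongs to the IC-allowed set for $\Lambda'$ because of the color compatibility built into the construction of the basic operators.

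For the second case ($x(\pi)=\cdots x_{\delta_p}(-p)$ with $p\geq 2$), I would take $I_{\pi_0,\Lambda}$ to be the suitable $z$-coefficient of the intertwining operator $\mathcal{Y}(e^{\mu},z)$ with $\mu=(p-1)\omega-\omega_i$, where $\omega_i\in\{0,\omega_1,\omega_{\ell-1},\omega_\ell\}$ is the $\g$-weight of $v_\Lambda$. A direct inner product calculation gives $\langle\gamma,\mu\rangle\in\{p-2,p-1\}$ for every $\gamma\in\Gamma$, and this is $\geq 0$ because $p\geq 2$; by \eqref{uvjet_kom} the operator $I_{\pi_0,\Lambda}$ therefore commutes with $\gtl_1$. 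Formula \eqref{formula} shows that the lowest $z$-coefficient sends $v_\Lambda$ to $C\cdot e^{(p-1)\omega}v_{\Lambda_0}$, giving (a). For (b), pulling $I_{\pi_0,\Lambda}$ past $x(\pi_1')$ and applying \eqref{omega} replaces the rightmost factor $x_{\tau_m}(-m)$ by the mode $x_{\tau_m}(-m+p-1)$ acting on $v_{\Lambda_0}$; since $m<p$ the new mode is non-negative, and because $\tau_m\in\Gamma$ is a positive root, $x_{\tau_m}(k)v_{\Lambda_0}=0$ for every $k\geq 0$.

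The principal difficulty is the IC verification in (c) of the first case: one has to match the color exposed at the new degree $-1$ of $x(\pi_2)^{+n}$ with an IC-allowed color for the target module $\Lambda'$. This comes down to lining up the set $\Gamma_1$ inside the basic operator with the color at the bottom of $x(\pi_2)$, and is really the reason for the delicate case-splitting (a)--(d) in the definition of the basic operators. The remaining ingredients---shift-invariance of DC, the inner product check, and the simple-current commutation---are routine.
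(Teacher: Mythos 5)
Your argument is correct, and it splits naturally into a part that coincides with the paper and a part that takes a different route. For the $x(\pi)=\dots x_{\delta}(-1)$ case you do exactly what the paper does: take $x(\pi_1)$ to be the rightmost successive block (or the exceptional pair $x_{\gamma_{\underline{2}}}(-1)x_{\gamma_2}(-1)$), attach the basic operator from cases a)--d) of Section \ref{operatori_1}, and quote the preceding proposition for property b); your discussion of c) (shift-invariance of $DC$, the bottom of $x(\pi_2)$ forced below degree $-n$, and the matching of the exposed color with the $IC$-allowed set for $\Lambda'$) is in fact more explicit than the paper's one-line ``clear from the way we defined the operators.'' Where you genuinely diverge is the skip operator: the paper builds $I_{\pi_0,\Lambda}$ as a composition of the previously catalogued basic operators --- first a spinor-type coefficient sending $v_{\lam}$ or $v_{\llam}$ to $v_{\Lambda_1}$ as in \eqref{op2}, then the chain $w(\cdot,p-1,0)$ of \eqref{op4} hopping repeatedly through $v_{\Lambda_1}=C\cdot e^{\omega}v_{\Lambda_0}$ --- whereas you take a single coefficient of $\mathcal{Y}(e^{\mu},z)$ with $\mu=(p-1)\omega-\omega_i$. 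Your positivity check $\langle\gamma,\mu\rangle=(p-1)-\langle\gamma,\omega_i\rangle\geq p-2\geq 0$ is right, so \eqref{uvjet_kom} gives commutation with $\gtl_1$ in one stroke, \eqref{formula} gives property a), and property b) follows from \eqref{omega} exactly as in the paper. The one-operator construction is more economical and makes the commutation condition transparent; the paper's composition has the advantage of staying entirely within the small list of basic operators that are reused verbatim in the level $2$ argument. Both yield the same statement, so the proposal stands.
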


\begin{proof}
Suppose first $x(\pi) = \dots x_{\delta}(-1)$. Then the existence of such partition and the affiliated operators with properties a) and b) follows directly from proposition above. We simply set $x(\pi_1)$ to be the first successive blocks in $x(\pi)$ and take the corresponding operator. Property c) is also clear from the way we defined the operators.

Let now $x(\pi) = \dots x_{\delta_p}(-p)$, $p \geq 2$. If $v_{\Lambda}=v_{\Lambda_0}$, we simply set $I_{\pi_0, \Lambda_0} = w(0,p-1,0)$ (see \eqref{op4}). If $v_{\Lambda} = v_{\llam}$ we compose the following basic operators:
\begin{align}
\notag v_{\llam} \overset{w(0,\llam')}{\mapsto} C_1 \cdot v_{\Lambda_1} = C_1' \cdot e^{\omega} v_{\Lambda_0} \overset{w(1,p-1,0)}{\mapsto} C_{(p-1)} \cdot e^{(p-1)\omega}v_{\Lambda_0},
\end{align}
i.e. we set $I_{\pi_0, \llam} = w(1,p-1,0) \circ w(0,\llam)$. Similarly, for $v_{\Lambda} = v_{\lam}$ we have $I_{\pi_0, \lam} = w(1,p-1,0) \circ w(0,\lam)$. If $v_{\Lambda} = v_{\Lambda_1}$, we simply set $I_{\pi_0, \Lambda_1} = w(1,p-1,0)$ so we have:
$$I_{\pi_0, \Lambda_1} v_{\Lambda_1} = w(1,p-1,0) \cdot C \cdot e^{\omega} v_{\Lambda_0} = C' \cdot e^{(p-1) \omega} v_{\Lambda_0}.$$
Property b) follows immediately from Lemma \ref{lem_op_1} since for $m < p$ we have:
$$x(\pi') \cdot C \cdot e^{(p-1)\omega} v_{\Lambda_0} = \dots x_{\tau_m}(-m) \cdot C \cdot e^{(p-1)\omega} v_{\Lambda_0} = C' \cdot x(\pi')^{+(p-1)} v_{\Lambda_0} = 0.$$
\end{proof}

\begin{remark} \label{kompon}
Let $x(\pi)$ be a monomial that satisfies both $DC$ and $IC$ on $W(\Lambda)$ and let $x(\pi') > x(\pi)$. We can then find an intertwining operator $I_{\pi, \Lambda}$ such that
\begin{itemize}
 \item[a)] $I_{\pi, \Lambda}x(\pi)v_{\Lambda} = C \cdot e^{n \omega} v_{\Lambda'} \neq 0$ for some $n$ and some $\Lambda'$,
 \item[b)] $I_{\pi, \Lambda}x(\pi')v_{\Lambda} = 0$.
 \end{itemize}
This follows directly from the Proposition \ref{operatori}, we simply compose the basic operators for the successive blocks in $x(\pi)$ and basic skip operators.
\begin{example} 
Let $\ell = 6$, $\Lambda = \Lambda_0$ and $$x(\pi) = x_{\gamma_{\underline{2}}}(-7) x_{\gamma_{\underline{4}}}(-6) x_{\gamma_{\underline{3}}}(-3) x_{\gamma_5}(-2)x_{\gamma_{3}}(-1)$$
a monomial that satisfies the $DC$ and $IC$ for $W(\Lambda_0)$. Then for the first successive block, i.e. 
$x_{\gamma_5}(-2)x_{\gamma_{3}}(-1)$ we have a basic intertwining operator $I_{\pi_1, \Lambda_0}$ such that:
\begin{align} \label{pr1}
& I_{\pi_1, \Lambda_0} x(\pi) v_{\Lambda_0} = C_1 \cdot x(\pi) w_{1246} = \\ & = \notag  C_1 \cdot  x_{\gamma_{\underline{2}}}(-7) x_{\gamma_{\underline{4}}}(-6) x_{\gamma_{\underline{3}}}(-3)  x_{\gamma_5}(-2)x_{\gamma_{3}}(-1) w_{1246} = \\ & = \notag x_{\gamma_{\underline{2}}}(-7) x_{\gamma_{\underline{4}}}(-6) x_{\gamma_{\underline{3}}}(-3) \cdot C_2 \cdot  e^{2 \omega} w_{123456} \\ & \notag  =  
x_{\gamma_{\underline{2}}}(-7) x_{\gamma_{\underline{4}}}(-6) \cdot C_3 \cdot e^{2 \omega}x_{\gamma_{\underline{3}}}(-1) w_{123456} = \\ & \notag =   
x_{\gamma_{\underline{2}}}(-7) x_{\gamma_{\underline{4}}}(-6) \cdot C_4 \cdot e^{ 3 \omega} w_{12456} 
\end{align}
\end{example}
Now we use the basic operator $I_{\pi_1, \Gamma_1}$ for $\Gamma_1 = \{12456 \}$ which maps $e^{3 \omega}w_{12456}$ to $e^{3 \omega}v_{\Lambda_1}$ and get:
\begin{align} \label{pr2}
& I_{\pi_1, \Gamma_1} \hspace{0.5mm} x_{\gamma_{\underline{2}}}(-7) x_{\gamma_{\underline{4}}}(-6) \cdot C_4 \cdot e^{ 3 \omega} w_{12456} = x_{\gamma_{\underline{2}}}(-7) x_{\gamma_{\underline{4}}}(-6) \cdot C_5 \cdot e^{ 3 \omega} v_{\Lambda_1} = \\ \notag & = C_6 \cdot x_{\gamma_{\underline{2}}}(-7) x_{\gamma_{\underline{4}}}(-6) e^{ 4 \omega} v_{\Lambda_0}.
\end{align}
We apply the basic skip operator $I_{\pi_0, \Lambda_0}$ such that  $I_{\pi_0, \Lambda_0}e^{4 \omega} v_{\Lambda_0} = C \cdot e^{ 5 \omega} v_{\Lambda_0}$ and we have:
\begin{align} \label{pr3}
C_6 \cdot I_{\pi_0, \Lambda_0} \hspace{0.5mm}  x_{\gamma_{\underline{2}}}(-7) x_{\gamma_{\underline{4}}}(-6) e^{ 4 \omega} v_{\Lambda_0} = C_7 \cdot x_{\gamma_{\underline{2}}}(-7) x_{\gamma_{\underline{4}}}(-6) e^{ 5 \omega} v_{\Lambda_0}.
\end{align}
Now we send $e^{5 \omega} v_{\Lambda_0}$ to $e^{5 \omega} w_{123456} = e^{5 \omega} v_{\llam}$ with $I_{\pi_2, \Lambda_0}$ and we get:
\begin{align} \label{pr4}
& I_{\pi_2, \Lambda_0} \hspace{0.5mm} C_7 \cdot x_{\gamma_{\underline{2}}}(-7) x_{\gamma_{\underline{4}}}(-6) e^{ 5 \omega} v_{\Lambda_0} = x_{\gamma_{\underline{2}}}(-7) x_{\gamma_{\underline{4}}}(-6) \cdot C_8 \cdot e^{ 5 \omega} v_{\llam} = \\ \notag & = C_9 \cdot e^{5 \omega} x_{\gamma_{\underline{2}}}(-2) x_{\gamma_{\underline{4}}}(-1)  w_{123456} =  C_{10} \cdot e^{7 \omega} w_{1356} 
\end{align}
We apply one more operator, $I_{\pi_2, \Gamma_1}$ where $\Gamma_1 = \{1356 \}$ and we have
\begin{align} \label{pr5}
& I_{\pi_2, \Gamma_1} \hspace{0.5mm}  C_{10} \cdot e^{7 \omega} w_{1356} = C_{11} \cdot e^{p \omega} v_{\Lambda_1}. 
\end{align}
The number $p$ depends on the shape of monomial $x(\pi')$. For instance, if $x(\pi') = \dots x_{\tau}(-10) x(\pi)$, we take $p \geq 9$ because then $\dots x_{\tau}(-9) e^{p \omega} v_{\Lambda_1} =0.$

The final operator $I_{\pi, \Lambda_0}$ is now the composition of operators \eqref{pr1}, \eqref{pr2}, \eqref{pr3}, \eqref{pr4} and 
\eqref{pr5}.
\end{remark}

\section{Proof of linear independence in level 1 case}\label{kraj1}
We will now use Proposition \ref{operatori} to prove the linear independence of the admissible vectors for $W(\Lambda)$ (see \eqref{razap1}). In this way we get the basis of the Feigin-Stoyanovsky subspace.
\begin{theorem}
Let $L(\Lambda_i)$, $i=0,1, \ell-1, \ell$ be a standard $\gtl$-module of level 1. The set of admissible vectors
$$\{ \ x(\pi)v_{\Lambda_i} \ \vert \ x(\pi) \ \textrm{satisfies DC and IC for $W(\Lambda_i)$} \ \}$$
is a basis of $W(\Lambda_i)$.
\end{theorem}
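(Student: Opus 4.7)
The plan is to combine Proposition \ref{razap}, which gives spanning, with a minimum-term argument powered by Remark \ref{kompon}. Suppose for contradiction that a finite nontrivial relation $\sum_{\pi \in S} c_\pi \, x(\pi) v_{\Lambda_i} = 0$ holds with every $x(\pi)$ admissible for $W(\Lambda_i)$, and let $x(\pi_0)$ denote the minimum of $\{x(\pi) : c_\pi \neq 0\}$ under the order of Section \ref{order}. My goal is to construct a single linear operator $I$ on $L(\Lambda_i)$ such that (a) $I$ commutes with $U(\gtl_1^-)$, (b) $I x(\pi_0) v_{\Lambda_i} = C \cdot e^{N\omega} v_{\Lambda'}$ is a nonzero vector in some level-one module $L(\Lambda')$, and (c) $I x(\pi) v_{\Lambda_i} = 0$ for every other $x(\pi) > x(\pi_0)$ occurring in the sum. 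Applying $I$ to the relation then yields $c_{\pi_0}\, C\, e^{N\omega} v_{\Lambda'} = 0$; the vector on the right is nonzero by the lattice VOA construction, so $c_{\pi_0} = 0$, contradicting the minimality of $\pi_0$.

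To build $I$, I would follow the recipe illustrated in Remark \ref{kompon}. Decompose $x(\pi_0)$ into maximal successive blocks separated by degree gaps and assemble $I$ by concatenating, from the trailing block outward, a basic skip operator from Proposition \ref{operatori} for each degree gap (this brings the next relevant factor down to depth $-1$) followed by a basic intertwining operator from Proposition \ref{operatori} for the corresponding successive block. Each constituent commutes with $U(\gtl_1^-)$: the operators $w(n,\Gamma_1)$, $w'(n,\Gamma_1')$, $w(m,n,0)$ are Fourier components of vertex operators $\mathcal{Y}(w_{\Gamma_1},z)$ associated to spinor weights containing $\epsilon_1$, hence orthogonal to $\Gamma$ in the sense of \eqref{uvjet_kom}; and the powers of $e(\omega)$ intertwine with $\gtl_1$ by the relabeling \eqref{omega}. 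Property (b) is then verified block by block by pushing each successive block of $x(\pi_0)$ through $I$ using Lemma \ref{lem_op_1}, arriving after finitely many steps at a scalar multiple of $e^{N\omega} v_{\Lambda'}$.

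Property (c) rests on two mechanisms. If $x(\pi) > x(\pi_0)$ has a strictly larger shape, then somewhere along the chain of operators in $I$ we encounter the situation of Lemma \ref{lem_op_1}(iii): two factors of the same lifted depth $-1$ are applied to a spinor weight vector, giving zero; in particular the skip operators convert any $x(\pi)$ with a factor of depth strictly smaller than the current target depth into a monomial with a non-negative-mode factor, which kills $v_{\Lambda_0}$ via the triangular decomposition. If instead $x(\pi)$ has the same shape as $x(\pi_0)$ but differs in colors within some successive block, Lemma \ref{lem_op_2} yields a weight vector $w_{\Gamma_1'} \neq w_{\Gamma_1}$ at the relevant intermediate stage, and Lemma \ref{svojstvo_op} guarantees that the ensuing group-two operator annihilates $e^{k\omega} w_{\Gamma_1'}$. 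The main obstacle I foresee is the uniformity of parameters: each basic skip operator carries its own exponent, and all of them must be chosen large enough to dominate every competitor in $S$ simultaneously. Because $S$ is finite, I plan to fix the exponents inductively, from the trailing block outward, each time enlarging the chosen value to exceed the deepest factor appearing in any remaining candidate; the ``$p \geq 9$'' choice in the example at the end of Remark \ref{kompon} illustrates exactly the type of bound needed. Once the parameters are coherently chosen the separation is complete, and combined with Proposition \ref{razap} the admissible vectors form a basis of $W(\Lambda_i)$.
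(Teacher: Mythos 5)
Your argument is correct in substance and runs on exactly the operators the paper uses (the basic intertwining operators of Proposition \ref{operatori}, the skip operators, the simple current $e(\omega)$, and Lemmas \ref{lem_op_1}, \ref{lem_op_2}, \ref{svojstvo_op}), but it is organized differently from the paper's proof. The paper does \emph{not} build one operator that separates the minimal term from all competitors at once; it runs an induction on degree, simultaneously over all four level--$1$ modules, peeling off only the \emph{first} successive block at each step: it applies the single block operator $I_{\pi_1,\Lambda_i}$, cancels the injective $e(n\omega)$, and obtains a \emph{new} linear relation $\sum c_{\pi'}x(\pi_2')^{+n}v_{\Lambda_j}=0$ among shorter monomials of strictly higher degree, to which the induction hypothesis applies (with a separate preliminary case, when $x(\pi)$ has no $(-1)$ factor, handled by mapping to $v_{\Lambda_1}$ and commuting $e(\omega)$ through to lift all degrees by one). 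Your one-shot version is essentially Remark \ref{kompon} promoted from a statement about a single pair $(x(\pi),x(\pi'))$ to a statement about $x(\pi_0)$ versus the whole finite support $S$; the uniform choice of skip exponents that you flag is indeed the right fix for that promotion. What the induction buys the paper, and what your version must supply by hand, is the verification of your property (c) for \emph{every} competitor: in the paper one only ever compares first blocks (competitors whose first block is smaller are disposed of by the order hypothesis, those whose lifted tails violate IC are shown to annihilate the new highest weight vector), whereas you must check that the fully composed operator kills competitors that agree with $x(\pi_0)$ on several initial blocks before diverging, and competitors whose early blocks carry \emph{smaller} colors but whose overall shape is larger. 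The latter case is not literally covered by your two stated mechanisms (Lemma \ref{lem_op_2} and the shape argument are phrased for greater blocks), though it does go through because the group-two operators of Lemma \ref{svojstvo_op} annihilate $e^{n\omega}w_{\Gamma_1''}$ for \emph{any} $\Gamma_1''\neq\Gamma_1$, larger or smaller; you should say this explicitly. With that point added, your route is a legitimate, slightly more monolithic alternative to the paper's induction.
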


\begin{proof}
By Proposition \ref{razap} we know that this set spans $W(\Lambda_i)$.
We shall prove the linear independence by induction on degree and order of monomials simultaneously for all level 1 standard modules. Assume
\begin{align}\label{suma}
 \sum c_{\pi'} x(\pi') v_{\Lambda_i} = 0
\end{align}
where all the monomials $x(\pi')$ satisfy $DC$ and $IC$ for $W(\Lambda_i)$. Assume further on all monomials are of degree greater than $-n$ and that $c_{\pi'} = 0$ for $x(\pi') < x(\pi)$ where $x(\pi)$ is some fixed monomial. We want to show that $c_{\pi}=0$. 
\vspace{2mm}

Suppose first $x(\pi) = \dots x_{\delta}(-m)$ where $m \geq 2$. If $v_{\Lambda_i} = v_{\Lambda_0}$, we use the operator from the third group which maps $v_{\Lambda_0}$ to $C \cdot v_{\Lambda_1}$, see \eqref{op3}. If $v_{\Lambda_i} = v_{\Lambda_{\ell-1}}$ or $v_{\Lambda_i} = v_{\Lambda_{\ell}}$, we use the operators from the second group which map $v_{\Lambda_{\ell-1}}$ to $C \cdot v_{\Lambda_1}$, that is $v_{\Lambda_{\ell}}$ to $C \cdot v_{\Lambda_1}$, see \eqref{op2}. In all this cases we get, after applying the operator:
\begin{align} 
 \sum c_{\pi'} x(\pi') v_{\Lambda_1} = 0.
\end{align}
(if $v_{\Lambda_i}$ was equal to $v_{\Lambda_1}$, we needn't have applied any operator, of course).  Now we use the fact that $v_{\Lambda_1} = C \cdot e^{\omega}v_{\Lambda_0} = C' \cdot e(\omega)v_{\Lambda_0}$. Operator $e(\omega)$ commutes to the left with monomials and lifts the degree of each factor by one, i.e. we have
\begin{align} 
 \sum c_{\pi'} x(\pi') v_{\Lambda_1} = C' \cdot \sum c_{\pi'} x(\pi') e(\omega) v_{\Lambda_0} = C' \cdot e(\omega) \sum c_{\pi'} x(\pi')^{+1}  v_{\Lambda_0} = 0.
\end{align}
Operator $e(\omega)$ is an injection and therefore we get:
\begin{align} \label{suma1}
 \sum c_{\pi'} x(\pi')^{+1}  v_{\Lambda_0} = 0.
\end{align}
All the monomials in \eqref{suma1} satisfy the $DC$ since the degree of each factor in \eqref{suma} was lifted by one. The also satisfy the $IC$ for $W(\Lambda_0)$ because $x_{\underline{\delta}}(-2)x_{\delta}(-2)$ block doesn't satisfy the $DC$ if $\delta \neq \gamma_2$. Using the induction presumption we now conclude $c_{\pi} = 0$. 
\vspace{2mm}

Suppose now $x(\pi) = \dots x_{\delta}(-1)$. By Proposition \ref{operatori} there exist a factorization of monomial $x(\pi)$, $x(\pi)=x(\pi_2)x(\pi_1)$ an operator $I_{\pi_1, \Lambda}$ such that:
\begin{itemize}
 \item[-] $I_{\pi_1, \Lambda_i}x(\pi_1)v_{\Lambda_i} = C \cdot e^{n \omega} v_{\Lambda_j} \neq 0$ where $v_{\Lambda_j}$ is also a highest weight vector of a level 1 module,
 \item[-] $I_{\pi_1, \Lambda_i}x(\pi_1')v_{\Lambda_i} = 0$ for $x(\pi_1') > x(\pi_1)$ and
 \item[-] $I_{\pi_1, \Lambda_i}x(\pi_1)v_{\Lambda_i}= C \cdot e^{n \omega} v_{\Lambda_j}$ and $x(\pi_2)^{+n}$ satisfies $DC$ and $IC$ on $W(\Lambda_j)$.
\end{itemize}
Applying this operator to the equation \eqref{suma} gives:
\begin{align*} 
0 &= I_{\pi_1, \Lambda_i} \left( \sum c_{\pi'} x(\pi') v_{\Lambda_i} \right) = I_{\pi_1, \Lambda_i} \left(\sum_{x(\pi'_1)< x(\pi_1)} c_{\pi'}x(\pi')v_{\Lambda_i} \right) + \\ & + I_{\pi_1, \Lambda_i} \left(\sum_{x(\pi'_1)> x(\pi_1)} c_{\pi'}x(\pi')v_{\Lambda_i} \right) + I_{\pi_1, \Lambda_i} \left(\sum_{x(\pi_1')= x(\pi_1)} c_{\pi'}x(\pi')v_{\Lambda_i} \right).
\end{align*}
First sum is equal to zero by presumption. The second becomes zero after $I_{\pi_1, \Lambda_i}$ acts on $x(\pi_1')v_{\Lambda_i}$. What is left is:
\begin{align*}
0 &= I_{\pi_1, \Lambda_i} \left(\sum_{x(\pi_1')= x(\pi_1)} c_{\pi'}x(\pi')v_{\Lambda_i} \right) =
 \sum_{x(\pi_1')= x(\pi_1)} c_{\pi'}x(\pi_2')I_{\pi} x(\pi_1)v_{\Lambda_i}  =\\ &= 
\sum_{x(\pi_1')= x(\pi_1)} c_{\pi'}x(\pi_2') \cdot C \cdot e(p \omega)v_{\Lambda_j} = C \cdot e(p \omega) \sum_{x(\pi_1')= x(\pi_1)} c_{\pi'}x(\pi_2')^{+p}v_{\Lambda_j}
\end{align*}
Since $e(p \omega)$ is an injection it follows that:
\begin{align}
 \sum_{x(\pi_1')= x(\pi_1)} c_{\pi'}x(\pi_2')^{+p}v_{\Lambda_j} = 0.
\end{align}
All monomials in this sum satisfy the difference conditions and are of degree greater or equal $-n+1$. They do not necessarily satisfy the initial conditions on $v_{\Lambda_j}$ but in this case they annihilate the highest weight vector. Namely, if a monomial $x(\mu)$ doesn't satisfy the $IC$ for $W(\Lambda)$ for some $\Lambda$ and at the same time $x(\mu)v_{\Lambda} \neq 0$ then it has to be $\Lambda = \Lambda_0$ and $ x(\mu) = \dots x_{\underline{\delta}}(-1)x_{\delta}(-1)$, $\delta \neq \gamma_2$. But this can not happen as  $x_{\underline{\delta}}(-(p+1))x_{\delta}(-(p+1))$ is not allowed by $DC$ if $\delta \neq \gamma_2$. Our fixed monomial $x(\pi)^{+p}$ satisfies the $IC$ according to the Proposition \ref{operatori} so we conclude $c_{\pi}=0$.
\end{proof}
Now we will use the same approach to find base of Feigin-Stoyanovsky's type subspaces of the level 2 modules for algebra $D_4^{(1)}$. We will again define the difference and initial conditions and then, using the intertwining operators, prove the linear independence of the spanning set. First we need the realization of the level 2 modules as submodules of tensor products of level 1 modules.
\section{Level 2 modules as subspaces of tensor products of modules $L(\lam)$ and $L(\llam)$}\label{pocetak2}
Standard modules $L(\Lambda_2), \dots, L(\Lambda_{\ell-2})$ for affine Lie algebra of type $D_{\ell}$ are of level 2. We realize this modules as submodules of the tensor product of level 1 modules. We need the following lemma:
\begin{lemma}\label{dekomp}
Let $\omega_1, \omega_2, \dots, \omega_{\ell-1}, \omega_{\ell}$ be the fundamental weights of a finite dimensional Lie algebra $\g$ of type $D_{\ell}$. The tensor products of spinor representations then decompose as follows:
\begin{align}
\label{ten1} L(\omega_{\ell-1}) \otimes L(\omega_{\ell}) &= L(\omega_{\ell-1} + \omega_{\ell}) \oplus L(\omega_{\ell-3}) \oplus L(\omega_{\ell-5}) \oplus \dots, \\
\label{ten2} L(\omega_{\ell-1}) \otimes L(\omega_{\ell-1}) &= L(2\omega_{\ell-1})  \oplus L(\omega_{\ell-2}) \oplus L(\omega_{\ell-4}) \oplus \dots, \\
\label{ten3} L(\omega_{\ell}) \otimes L(\omega_{\ell}) &= L(2\omega_{\ell})  \oplus L(\omega_{\ell-2}) \oplus L(\omega_{\ell-4}) \oplus \dots. 
\end{align}
\end{lemma}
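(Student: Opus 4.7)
The plan is to establish the three decompositions by weight enumeration, using the explicit realization of the spin modules from Section \ref{spinor}. Recall that every weight of $L(\omega_{\ell-1})$ or $L(\omega_\ell)$ has the form $\mu_\Sigma = \tfrac12\bigl(\sum_{i \in \Sigma}\epsilon_i - \sum_{j \notin \Sigma}\epsilon_j\bigr)$, occurring with multiplicity one, and the parity of $|\Sigma|$ distinguishes the two half-spin representations: $|\Sigma| \equiv \ell \pmod 2$ for $L(\omega_\ell)$ and $|\Sigma| \equiv \ell-1 \pmod 2$ for $L(\omega_{\ell-1})$.

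The central computation is that $\mu_\Sigma + \mu_{\Sigma'} = \sum_{i \in \Sigma \cap \Sigma'}\epsilon_i - \sum_{j \notin \Sigma \cup \Sigma'}\epsilon_j$, so every coordinate of such a sum lies in $\{-1,0,1\}$; consequently the only dominant weights that can arise are $\omega_k$ for $0 \le k \le \ell-2$ together with $\omega_{\ell-1}+\omega_\ell$, $2\omega_{\ell-1}$, and $2\omega_\ell$. In each of the three tensor products the parity of $|\Sigma|+|\Sigma'|$ is fixed, and this parity selects precisely one half of the dominant weights above: the odd-parity case yields \eqref{ten1}, with summands $L(\omega_{\ell-2k-1})$ and top $L(\omega_{\ell-1}+\omega_\ell)$, while the two even-parity cases yield \eqref{ten2} and \eqref{ten3}, with summands $L(\omega_{\ell-2k})$ distinguished by whether $L(2\omega_{\ell-1})$ or $L(2\omega_\ell)$ sits on top. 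In each decomposition the top summand is produced directly by the obvious tensor $v_{\omega_{\ell-1}}\otimes v_{\omega_\ell}$, $v_{\omega_{\ell-1}}\otimes v_{\omega_{\ell-1}}$, or $v_{\omega_\ell}\otimes v_{\omega_\ell}$, and an induction on the dominance order---subtracting the character of each already-identified summand and identifying the new highest weight of the remainder---produces the rest.

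Completeness is confirmed by the dimension identity $2^{\ell-1}\cdot 2^{\ell-1} = \sum \dim L(\lambda)$ over the claimed summands, using $\dim L(\omega_k) = \binom{2\ell}{k}$ for $k < \ell$ together with the standard dimension formulas for $L(\omega_{\ell-1}+\omega_\ell)$, $L(2\omega_{\ell-1})$, and $L(2\omega_\ell)$. The main obstacle is the combinatorial bookkeeping at the middle of the Dynkin diagram---ensuring that the series $\omega_{\ell-1}, \omega_{\ell-3}, \ldots$ (resp.\ $\omega_{\ell-2}, \omega_{\ell-4}, \ldots$) terminates correctly and that $L(2\omega_{\ell-1})$ versus $L(2\omega_\ell)$ is assigned to the right even-parity decomposition---but this is resolved by a careful parity check. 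Since these decompositions of spin tensor squares for $\mathfrak{so}(2\ell)$ are entirely classical, one may alternatively cite Fulton--Harris \cite{FH} directly.
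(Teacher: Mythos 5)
Your argument is correct, but it takes a genuinely different route from the paper. The paper's proof invokes the general tensor-product decomposition formula (Brauer--Klimyk, cited from \cite{H}),
$ch_{\lambda'} * ch_{\lambda''} = \sum_{\lambda \in \Pi(\lambda')} m_{\lambda'}(\lambda)\,t(\lambda + \lambda'' + \delta)\,ch_{\{\lambda + \lambda'' + \delta\}-\delta}$,
applied with $\lambda'$ a spinor weight, which is well adapted here precisely because all weights of the half-spin modules are known and have multiplicity one; like you, the author only sketches the computation. Your approach is more self-contained: it uses only the explicit weight description $\mu_\Sigma$ already set up in Section \ref{spinor}, the observation that $\mu_\Sigma+\mu_{\Sigma'}$ has all coordinates in $\{-1,0,1\}$ so that the possible highest weights are severely constrained, a parity count of the nonzero coordinates (which is $\equiv \ell+|\Sigma|+|\Sigma'| \pmod 2$ and thus constant on each of the three products) to split the list between \eqref{ten1} and \eqref{ten2}--\eqref{ten3}, and finally a binomial-coefficient identity to pin down the multiplicities. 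This buys elementarity at the cost of the bookkeeping you acknowledge; the paper's formula buys brevity at the cost of quoting a black box. Two small cautions: your claim $\dim L(\omega_k)=\binom{2\ell}{k}$ is false for $k=\ell-1$ (that module is the half-spin representation of dimension $2^{\ell-1}$, while $\Lambda^{\ell-1}\C^{2\ell}\cong L(\omega_{\ell-1}+\omega_\ell)$), though this is harmless since $\omega_{\ell-1}$ never occurs among the summands; and the dimension identity alone forces the multiplicities to be exactly one only after you know each listed summand occurs at least once, so the inductive character-subtraction step (or an explicit highest-weight-vector construction) is genuinely needed and not merely decorative. The closing remark that one may simply cite \cite{FH} is also legitimate, as these decompositions are classical.
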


\begin{proof}
We use the formula for the decomposition of the tensor product of two irreducible modules, $V(\lambda') \otimes V(\lambda'') $, in case when we know the weights and multiplicities of at least one of them (see \cite{H}),
$$ch_{\lambda'} * ch_{\lambda''} = \sum_{\lambda \in \Pi(\lambda')} m_{\lambda'}(\lambda)t(\lambda + \lambda'' + \delta)ch_{ \{ \lambda + \lambda'' + \delta \} - \delta}$$
where
$\Pi(\lambda')$ denotes the set of the weights of the module $V(\lambda')$ and $t(\mu)$ is defined as follows:
\begin{itemize}
 \item [-] $t(\mu) = 0$ if some element of Weyl's group $W$ different from identity fixes weight $\mu$,
 \item [-] $t(\mu) = sn(\sigma)$ if only identity fixes $\mu$ and $\sigma \in W$ is such that $\sigma(\mu)$ is a dominant weight.
\end{itemize}
Further on, $\{ \mu \}$ denotes the unique dominant weight conjugate of $\mu$. To prove \eqref{ten1}, we set $V(\lambda') = L(\omega_{\ell-1})$ and $V(\lambda'') = L(\omega_{\ell})$. Other decompositions are proved in a similar way.
\end{proof}
If we now take a look at the $\gtl$-modules $L(\lam)$ and $L(\llam)$, we see that the finite dimensional representations $L(\omega_{\ell-1})$ and $L(\omega_{\ell})$ are on top of those modules with respect to the gradation \eqref{gradacija}. Therefore we have on top of the $L(\lam) \otimes L(\llam)$,  $L(\lam) \otimes L(\lam)$ and   $L(\llam) \otimes L(\llam)$ the irreducible representations from decompositions \eqref{ten1}, \eqref{ten2} and \eqref{ten3} respectively. Now, the highest weight vectors for those $\g$-modules will be the highest weight vectors of the $\gtl$-modules $L(\Lambda_j)$, $j=2, \dots, \ell-2$, as well. If we set $m = \ell-j$, $m \geq 3$, we can therefore conclude that:
\begin{align}
L(\Lambda_j) &< L(\lam) \otimes L(\lam) \ \textrm{i.e.} \ L(\Lambda_j) < L(\llam) \otimes L(\llam) \quad \textrm{if $m$ is even}, \\
L(\Lambda_j) &< L(\lam) \otimes L(\llam) \quad \textrm{if $m$ is odd}.
\end{align}
How do those highest weight vectors look like? Weight vectors of the spinor representations are of the form $w_{\Psi}$ where $\Psi \subset \{1,2, \dots, \ell \}$ (see Section \ref{spinor}). Therefore vectors $w_{\Psi_1} \otimes w_{\Psi_2}$ will be of the weight $\omega_j = \epsilon_1 + \dots + \epsilon_j$, $j \in \{2, \dots, \ell-2 \}$, if 
\begin{itemize}
 \item [1)] $\{ 1, \dots, j \} \subset \Psi_1$ and $\{ 1, \dots, j \} \subset \Psi_2$,
 \item [2)] $\Psi_1 - \{ 1, \dots, j \}$ and $\Psi_2 - \{ 1, \dots, j \}$ are disjoint sets such that 
 \\ $(\Psi_1 - \{ 1, \dots, j \}) \cup (\Psi_2 - \{ 1, \dots, j \}) = \{j+1, \dots, \ell \}$.
\end{itemize}
\begin{example}
Let $\ell=7$. Then vector $w_{123456} \otimes w_{12347}$ is of weight $\omega_4$ in $L(\Lambda_6) \otimes L(\Lambda_7)$. Vector $w_{12346} \otimes w_{12357}$ is of weight $\omega_3$ in $L(\Lambda_7) \otimes L(\Lambda_7)$ and vector $w_{1236} \otimes w_{123457}$ is of weight $\omega_3$ in $L(\Lambda_6) \otimes L(\Lambda_6)$.
\end{example}
We will call such vectors \textit{basic vectors} and their linear combination will give us the highest weight vectors. The following lemma is proved in a similar way as Lemma \ref{lem_op_1}, using formula \eqref{formula}.

\begin{lemma}
Let $\alpha=\epsilon_i - \epsilon_j$. Then $x_{\alpha}(0)w_{\Gamma_1} \neq 0$ if and only if $i \notin \Gamma_1, j \in \Gamma_1$ in which case $x_{\alpha}(0)w_{\Gamma_1} = C \cdot w_{\Gamma_1'}$ where $\Gamma_1' = \Gamma_1 \cup \{ i\} - \{ j\}$. If $\alpha=\epsilon_i + \epsilon_j$, then $x_{\alpha}(0)w_{\Gamma_1} \neq 0$ if and only if $i \notin \Gamma_1, j \notin \Gamma_1$ in which case $x_{\alpha}(0)w_{\Gamma_1} = C \cdot w_{\Gamma_1'}$ where $\Gamma_1' = \Gamma_1 \cup \{ i,j\}$. If $\alpha=-\epsilon_i - \epsilon_j$, then $x_{\alpha}(0)w_{\Gamma_1} \neq 0$ if and only if $i,j \in \Gamma_1$ in which case $x_{\alpha}(0)w_{\Gamma_1} = C \cdot w_{\Gamma_1'}$ where $\Gamma_1' = \Gamma_1 - \{ i,j\}$.
\end{lemma}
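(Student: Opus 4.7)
The plan is to argue exactly as in the proof of Lemma \ref{lem_op_1}, but now for zero modes and arbitrary roots. Using the vertex operator formula $Y(e^{\alpha},z) = E^{-}(-\alpha,z)E^{+}(-\alpha,z)e_{\alpha}z^{\alpha}$, I would extract $x_{\alpha}(0)$ as the coefficient of $z^{-1}$ and evaluate on $w_{\Gamma_1}=1\otimes e^{\mu}$ with $\mu=\tfrac{1}{2}\bigl(\sum_{k\in\Gamma_1}\epsilon_k-\sum_{l\notin\Gamma_1}\epsilon_l\bigr)$. Positive Heisenberg modes annihilate $e^{\mu}$, so $E^{+}(-\alpha,z)e^{\mu}=e^{\mu}$; combined with $z^{\alpha}e^{\mu}=z^{\langle\alpha,\mu\rangle}e^{\mu}$ and $e_{\alpha}e^{\mu}=\epsilon(\alpha,\mu)e^{\alpha+\mu}$, this yields
\begin{align*}
Y(e^{\alpha},z)e^{\mu}\;=\;\epsilon(\alpha,\mu)\,z^{\langle\alpha,\mu\rangle}\,E^{-}(-\alpha,z)\,e^{\alpha+\mu}.
\end{align*}
Since $E^{-}(-\alpha,z)=1+\alpha(-1)z+\cdots$ involves only non-negative powers of $z$ with constant term $1$, the coefficient of $z^{-1}$ equals $\epsilon(\alpha,\mu)e^{\alpha+\mu}$ when $\langle\alpha,\mu\rangle=-1$ and vanishes identically when $\langle\alpha,\mu\rangle\ge 0$.

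The next step is a short case analysis. Using $\langle\epsilon_i,\mu\rangle=+\tfrac{1}{2}$ when $i\in\Gamma_1$ and $-\tfrac{1}{2}$ otherwise, I would compute $\langle\alpha,\mu\rangle$ for each of the three types of $\alpha$. In every case the minimum possible value is $-1$ (so the troublesome $\langle\alpha,\mu\rangle\le -2$ situation never arises), and it is attained precisely under the stated hypotheses:
\begin{align*}
&\alpha=\epsilon_i-\epsilon_j:\quad\langle\alpha,\mu\rangle=-1 \iff i\notin\Gamma_1,\ j\in\Gamma_1,\\
&\alpha=\epsilon_i+\epsilon_j:\quad\langle\alpha,\mu\rangle=-1 \iff i\notin\Gamma_1,\ j\notin\Gamma_1,\\
&\alpha=-\epsilon_i-\epsilon_j:\quad\langle\alpha,\mu\rangle=-1 \iff i\in\Gamma_1,\ j\in\Gamma_1.
\end{align*}
In every other admissible configuration $\langle\alpha,\mu\rangle\in\{0,1\}$ and the coefficient of $z^{-1}$ is $0$.

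Finally, in each non-vanishing case I would identify the spinor weight $\mu+\alpha$ with a set $\Gamma_1'$. For $\alpha=\epsilon_i-\epsilon_j$, adding $\alpha$ flips the sign of $\epsilon_i$ from $-\tfrac{1}{2}$ to $+\tfrac{1}{2}$ and the sign of $\epsilon_j$ from $+\tfrac{1}{2}$ to $-\tfrac{1}{2}$, so $\Gamma_1'=\Gamma_1\cup\{i\}\setminus\{j\}$; analogously, for $\alpha=\epsilon_i+\epsilon_j$ one obtains $\Gamma_1'=\Gamma_1\cup\{i,j\}$ and for $\alpha=-\epsilon_i-\epsilon_j$ one obtains $\Gamma_1'=\Gamma_1\setminus\{i,j\}$. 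Absorbing the $2$-cocycle value $\epsilon(\alpha,\mu)$ into the constant $C$ finishes the proof. No real obstacle appears; the lemma is forced by the vertex operator formula and the already-recalled relation \eqref{formula}, with the only work being the mechanical case check and the identification of $\mu+\alpha$ as a spinor weight.
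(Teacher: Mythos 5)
Your proof is correct and follows essentially the same route as the paper, which simply invokes formula \eqref{formula} (the statement that $\mathcal{Y}(e^{\lambda},z_0)e^{\nu}$ lies in $z_0^{\langle\lambda,\nu\rangle}V_P[[z_0]]$ with nonzero leading coefficient $Ce^{\lambda+\nu}$) and notes the argument is the same as for Lemma \ref{lem_op_1}; your computation that $x_{\alpha}(0)e^{\mu}\neq 0$ exactly when $\langle\alpha,\mu\rangle=-1$, together with the case check on $\langle\epsilon_i,\mu\rangle=\pm\tfrac{1}{2}$, is precisely the intended argument, merely with \eqref{formula} re-derived from the vertex operator formula \eqref{vertex} rather than cited. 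The observation that $\langle\alpha,\mu\rangle\geq -1$ always holds, so the case of coefficients below the leading one never needs to be examined, is a worthwhile explicit remark.
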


Using this lemma we can prove:
\begin{proposition}\label{vektor_j}
Let $j= \ell - m$, $m \geq 3$ odd. Then $L(\lam) \otimes L(\llam)$ contains a submodule $L(\Lambda_j)$ with a highest weight vector of the form
\begin{align}
\label{vektor} v_{\Lambda_j} = \sum_p C_p \cdot w_{12 \dots j \Psi_1^p} \otimes w_{12 \dots j \Psi_2^p}
\end{align}
where sets $\Psi_1^p$ and $\Psi_2^p$ are disjoint subsets of $\{ j+1, \dots, \ell \}$ such that $\Psi^p_1 \cup \Psi^p_2 = \{ j+1, \dots, \ell \}$. Further on, $\vert \Psi_1^p \vert + j \equiv \ell - 1 \ \textrm{mod} \ 2$ and $\vert \Psi_2^p \vert + j \equiv \ell \ \textrm{mod} \ 2$. All coefficients in \eqref{vektor} are nonzero. The analogous claims hold for the submodules $L(\Lambda_j) < L(\lam) \otimes L(\lam)$ (i.e. $L(\Lambda_j) < L(\llam) \otimes L(\llam)$).
\end{proposition}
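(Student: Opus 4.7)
My plan is to first pin down the $\omega_j$-weight space of $L(\lam) \otimes L(\llam)$ and then characterize its highest weight vector by the annihilation condition. From Section \ref{spinor}, weight vectors of $L(\lam)$ are the $w_\Sigma$ with $|\Sigma| \equiv \ell-1 \pmod 2$, and those of $L(\llam)$ satisfy $|\Sigma| \equiv \ell \pmod 2$. A vector $w_{\Sigma_1} \otimes w_{\Sigma_2}$ has weight $\omega_j = \epsilon_1 + \dots + \epsilon_j$ precisely when $\Sigma_1 \cap \Sigma_2 = \{1,\dots,j\}$ and $\Sigma_1 \cup \Sigma_2 = \{1,\dots,\ell\}$; writing $\Psi_i^p = \Sigma_i \setminus \{1,\dots,j\}$ recovers the indexing set of the proposition, and the parity conditions $|\Psi_1^p| + j \equiv \ell - 1 \pmod 2$, $|\Psi_2^p| + j \equiv \ell \pmod 2$ are automatic.

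Next I invoke Lemma \ref{dekomp}: since $m = \ell - j$ is odd, $L(\omega_j)$ occurs with multiplicity one in the $\g$-module on top of $L(\lam) \otimes L(\llam)$. Hence there is (up to scalar) a unique vector $v_{\Lambda_j}$ in the $\omega_j$-weight space that is annihilated by $x_{\alpha_i}(0)$ for every simple root $\alpha_i$ of $\g$. Writing a general such vector as
$$v_{\Lambda_j} = \sum_p C_p \, w_{12\dots j \Psi_1^p} \otimes w_{12\dots j \Psi_2^p},$$
the problem reduces to a finite linear system in the $C_p$. For simple roots $\alpha_i = \epsilon_i - \epsilon_{i+1}$ with $i < j$ the annihilation is automatic since $i, i+1 \in \{1,\dots,j\}$ lie in every $\Sigma_1^p$ and $\Sigma_2^p$. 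For the remaining simple roots $\alpha_i$ with $j \leq i \leq \ell-1$, together with $\alpha_\ell = \epsilon_{\ell-1}+\epsilon_\ell$, I apply the preceding lemma on the action of $x_\alpha(0)$ on basic vectors: each such $x_{\alpha_i}(0)$ either swaps a single index between $\Psi_1^p$ and $\Psi_2^p$, or (for $\alpha_\ell$) simultaneously removes/adds a pair of indices.

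Collecting the coefficient of each target basic vector in $x_{\alpha_i}(0) v_{\Lambda_j} = 0$ yields relations of the form $C_{p'} = \pm C_p$, with signs coming from the cocycle $\epsilon(\cdot,\cdot)$ implicit in the vertex operator formula \eqref{vertex}. These elementary moves generate the full symmetric group $S_m$ acting by permutations of $\{j+1,\dots,\ell\}$ on the indexing set of pairs $(\Psi_1^p,\Psi_2^p)$, and this action is transitive within each parity class. The main obstacle is verifying that the cocycle signs are consistent around the closed loops in this transposition graph; however, because Lemma \ref{dekomp} already guarantees the solution space is one-dimensional, any apparent inconsistency would force every $C_p = 0$, contradicting existence of the highest weight vector. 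Consequently the recursion forces every $C_p$ to be a nonzero scalar multiple of $C_1$, yielding the claim. The analogous argument for the other two tensor products with $m$ even is identical, with the appropriate parity of $|\Psi_i^p|$ imposed by whichever spinor factor is used.
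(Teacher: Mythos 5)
Your proposal is correct, and it reaches the conclusion by a noticeably different route than the paper, although both rest on the same two inputs: the existence of the highest weight vector guaranteed by Lemma \ref{dekomp}, and the lemma describing the action of $x_{\alpha}(0)$ on the vectors $w_{\Gamma_1}$. The paper does not set up the full linear system from the simple roots. Instead it uses the root vectors $x_{\epsilon_{i_1}+\epsilon_{i_2}}(0)$ and organizes the argument as an induction on $\vert\Psi_1\vert$: among the basic vectors occurring with nonzero coefficient it picks one with $\vert\Psi_2\vert$ maximal and shows, by applying $x_{\epsilon_{i_1}+\epsilon_{i_2}}(0)$ for $i_1,i_2\notin\Psi_2$, that maximality forces $\Psi_2=\{j+1,\dots,\ell\}$, so the extreme vector $w_{12\dots j}\otimes w_{12\dots\ell}$ has nonzero coefficient; then, if some vector with $\vert\Psi_1\vert=2$ had zero coefficient, applying $x_{\epsilon_{t_1}+\epsilon_{t_2}}(0)$ would produce a target reachable only from the extreme vector, contradicting $x_{\alpha}(0)v_{\Lambda_j}=0$; and so on for $\vert\Psi_1\vert=4,6,\dots$. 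Your version replaces this by the graph of two-term relations coming from the simple roots together with a connectivity argument. What your approach buys is that the multiplicity-one input is explicit and the cocycle-sign bookkeeping is dispatched in one stroke (a connected graph of two-term relations with nonzero coefficients admits either only the zero solution or only solutions with all coordinates nonzero); what the paper's buys is that one never needs to verify transitivity of the chosen moves, since each induction step manufactures exactly the relation it needs. Two small imprecisions in your write-up, neither fatal: $\alpha_j=\epsilon_j-\epsilon_{j+1}$ in fact annihilates every basic vector (the index $j$ lies in both factors), so it belongs with the ``automatic'' simple roots and contributes no relations; and the moves do not literally generate $S_m$ acting transitively on the index set of pairs --- the transpositions $(i,i+1)$ for $j+1\leq i\leq\ell-1$ generate $S_m$ but preserve $\vert\Psi_1\vert$ exactly, and it is the separate pair-transfer move from $\alpha_\ell$ that links the levels $\vert\Psi_1\vert=k$ and $k+2$; only together do they make the graph connected.
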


\begin{proof}
We need to show that coefficients in \eqref{vektor} differ from zero. Let

\begin{align}
\label{vektor1} v_{\Lambda_j} = \sum_p C_p \cdot w_{12 \dots j \Psi_1^p} \otimes w_{12 \dots j \Psi_2^p}
\end{align}
and let $w_{12 \dots j \Psi_1} \otimes w_{12 \dots j \Psi_2}$ be the vector with nonull coefficient from this linear combination such that its $\Psi_2$ set has the maximal number of elements (this vector needn't be unique, we pick one of them). If $w_{12 \dots j\Psi_2} \neq w_{12 \dots \ell}$, then we can find two elements $i_1$ and $i_2$ such that $i_1, i_2 \in \{j+1, \dots, \ell \} - \Psi_2$. We apply $x_{\alpha}(0)$ to \eqref{vektor1} where $\alpha = \epsilon_{i_1} + \epsilon_{i_2}$ and we get, among others, vector $w_{12 \dots j \Psi_1'} \otimes w_{12 \dots j \Psi_2'}$ where $\Psi_2' = \Psi_2 \cup \{i_1, i_2 \}$. It is clear that this vector could be obtained exclusively from $w_{12 \dots j \Psi_1} \otimes w_{12 \dots j \Psi_2}$ because $\vert \Psi_2 \vert$ is maximal. Therefore we get $x_{\alpha}(0)v_{\Lambda_j} \neq 0$ which is a contradiction. We conclude that $\{ 12 \dots j \Psi_2 \} = \{12, \dots, \ell \}$, i.e. vector $w_{12 \dots j} \otimes w_{12 \dots \ell}$ appears with nonull coefficient in \eqref{vektor1}. 

Suppose now there exist a vector $w = w_{12 \dots j \Psi_1} \otimes w_{12 \dots j \Psi_2}$ such that $\Psi_1$ has two elements, i.e. $\Psi_1 = \{ t_1, t_2 \}$ and such that the corresponding coefficient for $w$ in \eqref{vektor1} equals zero. We apply $x_{\alpha}(0)$ to \eqref{vektor1} where $\alpha = \epsilon_{t_1} + \epsilon_{t_2}$. This transfers $w_{12 \dots j} \otimes w_{12 \dots \ell}$ to $w_{12 \dots j t_1 t_2} \otimes w_{12 \dots \ell}$ and again this is the only way to obtain vector $w_{12 \dots j t_1 t_2} \otimes w_{12 \dots \ell}$ since the sum \eqref{vektor1}  doesn't contain $w$. It follows that $x_{\alpha}(0)v_{\Lambda_j} \neq 0$ which is a contradiction. Therefore all vectors $w_{12 \dots j \Psi_1} \otimes w_{12 \dots j \Psi_2}$ such that $\Psi_1$ has two elements appear with a nonull coefficient in \eqref{vektor1}. We proceed in an analogous way, proving that all vectors of the form $w_{12 \dots j \Psi_1} \otimes w_{12 \dots j \Psi_2}$ where $\Psi_1$ has four elements appear with nonull coefficient in \eqref{vektor1} an so on.
\end{proof}
Let us state one more fact concerning the vectors $v_{\Lambda_j}$:
\begin{proposition}\label{kolinearnost}
Denote by $L(\Lambda_j)$ the submodule of $L(\lam) \otimes L(\llam)$ with the highest weight vector of the form \eqref{vektor} and let $V$ be its invariant complement. Then $L(\Lambda_j)\cong L(\lam) \otimes L(\llam) / V$ and in $L(\Lambda_j)$ we have $w_{12 \dots j \Psi_1^p} \otimes w_{12 \dots j \Psi_2^p} = C_{ps} \cdot w_{12 \dots j \Psi_1^s} \otimes w_{12 \dots j \Psi_2^s}$ for some $C_{ps} \neq 0$, i.e. vectors from \eqref{vektor} are collinear on the quotient $L(\lam) \otimes L(\llam) / V$. The analogous claims hold for the submodules $L(\Lambda_j) < L(\lam) \otimes L(\lam)$ (i.e. $L(\Lambda_j) < L(\llam) \otimes L(\llam)$).
\end{proposition}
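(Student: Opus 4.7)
The first part follows from complete reducibility. Because $L(\lam)$ and $L(\llam)$ are integrable standard $\gtl$-modules of level $1$, their tensor product is integrable of level $2$ and decomposes into a direct sum of standard $\gtl$-modules. Comparing top-level $\g$-submodules (with respect to \eqref{gradacija}) against the finite-dimensional decomposition \eqref{ten1} of Lemma \ref{dekomp}, each irreducible summand at the top corresponds to exactly one standard summand of $L(\lam)\otimes L(\llam)$; in particular $L(\Lambda_j)$ appears with multiplicity one, so $L(\lam)\otimes L(\llam)=L(\Lambda_j)\oplus V$ and hence $L(\Lambda_j)\cong L(\lam)\otimes L(\llam)/V$.

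For the collinearity, let $T$ denote the $\omega_j$-weight subspace at the top of $L(\lam)\otimes L(\llam)$. By Proposition \ref{vektor_j}, $T$ is spanned by the $N$ basic vectors $w_p = w_{12\dots j\Psi_1^p}\otimes w_{12\dots j\Psi_2^p}$. In $L(\Lambda_j)$, the $\omega_j$-weight space at the top is one-dimensional, spanned by $v_{\Lambda_j}$. Hence the projection $\pi\colon L(\lam)\otimes L(\llam)\twoheadrightarrow L(\Lambda_j)$ restricted to $T$ has the form $\pi(w_p)=d_p v_{\Lambda_j}$ for scalars $d_p$. The collinearity statement is therefore automatic once we verify that every $d_p$ is nonzero.

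To prove nonvanishing I would exploit the Weyl subgroup $W_{\omega_j}\subset W$ stabilizing $\omega_j$. Since $V$ is a $\g$-submodule it is $W$-stable, so $\pi$ is $W$-equivariant. In the spinor representations each Weyl element acts by $w_\Sigma\mapsto\pm w_{g\Sigma}$, so acting diagonally on the tensor product it sends basic vectors to $\pm$ basic vectors. Reflections in the roots $\pm\epsilon_k\pm\epsilon_m$ with $k,m\in\{j+1,\dots,\ell\}$ all lie in $W_{\omega_j}$, and their diagonal action either swaps an element of $\Psi_1^p$ with an element of $\Psi_2^p$ or migrates a pair of indices between the two sets; composing such reflections one can transform any basic vector into any other. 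Therefore $d_p=0$ for a single $p$ would force $d_q=0$ for every $q$. But by Proposition \ref{vektor_j} we have $v_{\Lambda_j}=\sum_p C_p w_p$ with all $C_p\neq 0$, and $\pi(v_{\Lambda_j})=v_{\Lambda_j}\neq 0$, so $\sum_p C_p d_p\neq 0$; hence no $d_p$ vanishes, and in $L(\Lambda_j)$ we obtain $w_p=(d_p/d_s)\,w_s$ with nonzero scalar $C_{ps}:=d_p/d_s$. The corresponding submodules inside $L(\lam)\otimes L(\lam)$ and $L(\llam)\otimes L(\llam)$ are treated identically using \eqref{ten2} and \eqref{ten3}.

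The chief obstacle is the combinatorial verification of the diagonal $W_{\omega_j}$-transitivity on basic vectors, and in particular the careful bookkeeping of signs arising from the spinor representation. If one prefers to avoid the Weyl group altogether, an alternative in the spirit of the proof of Proposition \ref{vektor_j} is to exhibit, for each ordered pair $(w_p,w_q)$, a weight-zero element of $U(\g)$ of the form $x_{-\alpha}(0)\,x_\alpha(0)$ whose action on $w_p$ produces a linear combination of basic vectors containing $w_q$ with nonzero coefficient; applying $\pi$ and using that any weight-zero operator acts as a scalar on the one-dimensional $\omega_j$-weight space of $L(\Lambda_j)$ recovers the same conclusion.
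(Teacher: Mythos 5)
Your proof is correct, but for the essential part (collinearity) it takes a genuinely different route from the paper. The paper's argument is purely local and explicit: it observes that a vector such as $w_{12\dots j\, i_1 i_2}\otimes w_{12\dots\ell}$ sits in the top degree and has weight $\omega_j+\epsilon_{i_1}+\epsilon_{i_2}$, which is not a weight of $L(\omega_j)$, hence this vector vanishes in the quotient; applying the single lowering operator $x_{-\epsilon_{i_1}-\epsilon_{i_2}}(0)$ to that zero vector produces (via the $x_\alpha(0)w_{\Gamma_1}$ lemma) an exact two-term relation $w_{12\dots j}\otimes w_{12\dots\ell}=C\cdot w_{12\dots j\,i_1 i_2}\otimes w_{12\dots j\Psi_2}$ in $L(\Lambda_j)$, and iterating links all basic vectors in a chain. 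Your argument instead projects everything onto the one-dimensional $\omega_j$-weight space at the top of $L(\Lambda_j)$, writes $\pi(w_p)=d_pv_{\Lambda_j}$, and uses equivariance under (lifts of) the stabilizer of $\omega_j$ in the Weyl group together with transitivity of that stabilizer on the basic vectors to show that the $d_p$ vanish or not simultaneously; the identity $\sum_pC_pd_p=1$ coming from $\pi(v_{\Lambda_j})=v_{\Lambda_j}$ then rules out vanishing. Both are sound. The paper's route is more self-contained (it does not use the nonvanishing of the $C_p$ from Proposition \ref{vektor_j}, and it yields the proportionality constants directly), whereas yours trades the explicit relations for a group-theoretic transitivity check, which you correctly identify as the remaining bookkeeping; note that for that check you only need the Weyl lifts to send weight vectors to nonzero multiples of weight vectors, so the sign issue you worry about is immaterial. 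Your closing ``alternative'' is in the spirit of the paper's proof, but the paper's version is cleaner than degree-zero operators of the form $x_{-\alpha}(0)x_\alpha(0)$: one starts from a vector of weight $\omega_j+\alpha$ that is forced to be zero in the quotient and applies $x_{-\alpha}(0)$ once.
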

\begin{proof}
Let suppose first $L(\Lambda_j) < L(\lam) \otimes L(\llam)$. Vector $v=w_{12 \dots j  i_1  i_2} \otimes w_{12 \dots \ell}$ is of weight $\epsilon_1 + \dots \epsilon_j + \epsilon_{i_1} + \epsilon_{i_2}$ and as such, has to be zero in $L(\Lambda_j)$. Therefore we have $0=x_{\alpha}(0)v$ in $L(\Lambda_j)$ for $\alpha = -(\epsilon_{i_1} + \epsilon_{i_2})$ which gives us 
$$w_{12 \dots j } \otimes w_{12 \dots \ell} - C \cdot w_{12 \dots j  i_1  i_2} \otimes w_{12 \dots j \Psi_2} = 0$$
where $\Psi_2 = \{j+1, \dots, \ell \} - \{i_1,i_2 \}$. So, all basic vectors of the form $w_{12 \dots j  i_1  i_2} \otimes w_{12 \dots j \Psi_2} = 0$ are collinear with $w_{12 \dots j } \otimes w_{12 \dots \ell}$. Now we prove in the similar way that $w_{12 \dots j \ i_1 \ i_2 \ i_3 \ i_4} \otimes w_{12 \dots j \Psi_2'}$ where $\Psi_2 = \{j+1, \dots, \ell \} - \{i_1, i_2, i_3, i_4 \}$ are collinear with $w_{12 \dots j \ i_1 \ i_2} \otimes w_{12 \dots j \Psi_2}$ and so on.
Proof is the same for $L(\Lambda_j) < L(\lam) \otimes L(\lam)$ (i.e. $L(\Lambda_j) < L(\llam) \otimes L(\llam)$).
\end{proof}
In the next section we will use the above description of the highest weight vectors for the level 2 modules to find the difference and initial condition which will enable us to reduce the spanning set \eqref{PBW}.

\begin{remark} \label{e_2}
In the reminder of the text we will denote the vector $e^{\lambda}w \otimes e^{\lambda}w'$ simply with $e^{\lambda}(w \otimes w')$. Notation $e(\lambda)(w \otimes w')$ will be applied for $e(\lambda) \otimes e(\lambda) (w \otimes w') = e(\lambda) w \otimes e(\lambda) w'$ (see Section \ref{op_pr_struje}). Vectors $e^{\lambda}(w \otimes w')$ and $e(\lambda)(w \otimes w')$ are now again collinear. 
\end{remark}

\section{Difference and initial conditions for level 2 modules}
The aim of this section is to reduce the spanning set \eqref{PBW} as in level 1 case (see Section \ref{uvjeti_razlike}). We find the relations among vertex operators which give us the leading terms. Monomials which don't contain the leading terms are said to satisfy the difference conditions. Then we define the initial conditions and show that every monomial which doesn't satisfy the initial conditions can be expressed as a sum of the greater ones.

\subsection{Difference conditions}
In order to find the relations among fields we first observe that:
\begin{lemma} For $\gamma, \delta, \tau, \mu \in \Gamma$ we have:
\begin{align}
x_{\tau}(-1)x_{\delta}(-1)x_{\gamma}(-1) & (v_{\Lambda_0}  \otimes v_{\Lambda_0})  = 0 \\ \notag &  \quad \textrm{if $\{\tau, \delta, \gamma\}$ set doesn't contain $\underline{\mu}, \mu$ pair}, \\ x_{\tau}(-1)x_{\underline{\gamma}}(-1)x_{\gamma}(-1) & (v_{\Lambda_0}  \otimes v_{\Lambda_0}) = C  \cdot x_{\tau}(-1)x_{\underline{\delta}}(-1)  x_{\delta}(-1)(v_{\Lambda_0} \otimes v_{\Lambda_0}) \\ \notag & \quad \textrm{for some $C \neq 0$}.
\end{align}

\end{lemma}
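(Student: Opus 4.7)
The plan is to expand the triple action on the tensor product via the coproduct of $U(\gtl_1)$ and then apply Lemma~\ref{vakuum0} to each tensor factor. Since $\gtl_1$ is abelian and acts on $V_P \otimes V_P$ primitively, $X(v \otimes v) = Xv \otimes v + v \otimes Xv$ for every $X \in \gtl_1$. Iterating three times, writing $a = x_\gamma(-1)$, $b = x_\delta(-1)$, $c = x_\tau(-1)$,
\begin{align*}
cba(v_{\Lambda_0} \otimes v_{\Lambda_0}) \;=\; \sum_{S \subseteq \{a,b,c\}} \Bigl(\prod_{X \in S} X\Bigr) v_{\Lambda_0} \;\otimes\; \Bigl(\prod_{X \notin S} X\Bigr) v_{\Lambda_0},
\end{align*}
a sum of $2^3 = 8$ terms (with appropriate multiplicities if any two of $a,b,c$ coincide). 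The four ``extreme'' terms (three operators on one side and none on the other) vanish by Lemma~\ref{vakuum0}\,(c), so only the six ``$2$-$1$ splits'' can survive.

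Next I would observe that by Lemma~\ref{vakuum0}\,(a), a term with two factors on the same tensor slot vanishes unless those two factors form an opposite-color pair $\{x_\mu(-1), x_{\underline\mu}(-1)\}$, in which case Lemma~\ref{vakuum0}\,(b) turns it into a nonzero multiple of $e^{2\omega} v_{\Lambda_0}$. This immediately gives the first claim: if $\{\tau, \delta, \gamma\}$ contains no opposite pair then no pair we can split off is opposite, so every 2-1 split is killed and the whole expression is zero.

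For the second claim, I would apply the same expansion to $cba$ with $b = x_{\underline\gamma}(-1)$ (so $\{b, a\}$ is the unique opposite pair, provided $\tau \notin \{\gamma,\underline\gamma\}$). The only surviving terms are the two in which $\{a,b\}$ sits together:
\begin{align*}
x_\tau(-1)x_{\underline\gamma}(-1)x_\gamma(-1)(v_{\Lambda_0}\otimes v_{\Lambda_0})
 \;=\; C_\gamma\bigl(e^{2\omega}v_{\Lambda_0}\otimes x_\tau(-1)v_{\Lambda_0} + x_\tau(-1)v_{\Lambda_0}\otimes e^{2\omega}v_{\Lambda_0}\bigr),
\end{align*}
where $C_\gamma \ne 0$ is the constant from Lemma~\ref{vakuum0}\,(b). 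The analogous computation with the opposite pair $(\underline\delta, \delta)$ produces exactly the same tensor, with constant $C_\delta \ne 0$ in place of $C_\gamma$, so the two expressions are proportional with $C = C_\gamma / C_\delta$.

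The only place one has to be careful is the corner case when $\tau$ coincides with one of $\gamma, \underline\gamma, \delta, \underline\delta$, because then the coproduct expansion acquires a multiplicity (e.g.\ $\Delta(a)^2 = a^2\otimes 1 + 2a\otimes a + 1\otimes a^2$). I would handle this by noting that the extra $x_\gamma(-1)^2 v_{\Lambda_0}$ type terms still vanish by Lemma~\ref{vakuum0}\,(a), and the surviving 2-1 split terms again assemble into the same tensor $e^{2\omega}v_{\Lambda_0}\otimes x_\tau(-1)v_{\Lambda_0} + x_\tau(-1)v_{\Lambda_0}\otimes e^{2\omega}v_{\Lambda_0}$, merely with a different nonzero scalar; this is the only real bookkeeping obstacle. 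The nonvanishing of the proportionality constant $C$ follows because each $C_\gamma$ is nonzero in Lemma~\ref{vakuum0}.
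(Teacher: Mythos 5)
Your proof is correct and is essentially the paper's argument: the paper disposes of this lemma with the single line ``Immediately from Lemma \ref{vakuum0},'' and your coproduct expansion of the action on $v_{\Lambda_0}\otimes v_{\Lambda_0}$, followed by a term-by-term application of the relations \eqref{vakuum1}--\eqref{vakuum3}, is exactly the intended elaboration. Your attention to the multiplicity bookkeeping when $\tau$ coincides with one of the other colors is a welcome extra detail the paper leaves implicit.
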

\begin{proof}
Immediately from Lemma \ref{vakuum0}.
\end{proof}
This gives us the relations between the vertex operators:
\begin{proposition} For $\gamma, \delta, \tau, \mu \in \Gamma$ we have:
\begin{align}
\label{rel21} x_{\tau}(z)x_{\delta}(z)x_{\gamma}(z) &= 0  \quad \textrm{if $\{\tau, \delta, \gamma \}$ set doesn't contain $\underline{\mu}, \mu$ pair}, \\ \label{rel22} x_{\tau}(z)x_{\underline{\gamma}}(z)x_{\gamma}(z\index{\footnote{}}) &= C \cdot x_{\tau}(z)x_{\underline{\delta}}(z)x_{\delta}(z) \ \textrm{for some $C \neq 0$}.
\end{align}
\end{proposition}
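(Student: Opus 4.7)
The plan is to mimic verbatim the level-1 argument preceding Theorem~\ref{relacije}, now with three fields instead of two and with the vacuum replaced by the vacuum of a tensor product. The preceding lemma has already established the three-fold identities on the vector $v_{\Lambda_0}\otimes v_{\Lambda_0}$; the task is just to upgrade each such identity on this vector to an identity of formal Laurent series in $z$.

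First I would observe that since $\gtl_1$ is a commutative Lie algebra, the fields $x_\gamma(z)$, $x_\delta(z)$, $x_\tau(z)$ pairwise commute as ordinary formal Laurent series, so the product $x_\tau(z)x_\delta(z)x_\gamma(z)$ is well-defined with no ordering ambiguity and no polar terms arising from contractions. Next I would work inside the tensor VOA $V_Q\otimes V_Q$ (a module of $V_P\otimes V_P$), whose vacuum vector $\mathbf{1}=v_{\Lambda_0}\otimes v_{\Lambda_0}$ contains, via the decompositions recalled in Section~\ref{pocetak2}, all level-2 modules of interest. By the standard ``creation/abelian iterated product'' principle in vertex algebra theory (cf.~\cite{DL},~\cite{LL}, which is exactly what was used in the level-1 case), commutativity of the fields gives
\begin{equation*}
x_\tau(z)x_\delta(z)x_\gamma(z)\;=\;Y\bigl(x_\tau(-1)x_\delta(-1)x_\gamma(-1)(v_{\Lambda_0}\otimes v_{\Lambda_0}),\,z\bigr).
\end{equation*}

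Now I would simply feed in the preceding lemma. If $\{\tau,\delta,\gamma\}$ contains no opposite pair, the ``created'' vector on the right is zero, so the triple product of fields is zero, giving \eqref{rel21}. Similarly, since $x_\tau(-1)x_{\underline\gamma}(-1)x_\gamma(-1)(v_{\Lambda_0}\otimes v_{\Lambda_0})$ and $x_\tau(-1)x_{\underline\delta}(-1)x_\delta(-1)(v_{\Lambda_0}\otimes v_{\Lambda_0})$ agree up to a nonzero scalar $C$, applying $Y(\cdot,z)$ to both sides yields \eqref{rel22}.

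The only real subtlety is the ``vacuum-to-field'' passage for a triple in a tensor VOA, i.e.\ justifying the displayed identity above; this is the main (but standard) step, and it reduces to checking that the iterated composition of fields in an abelian vertex subalgebra equals the vertex operator attached to the iterated normal-ordered product, combined with the injectivity of the state-field correspondence $u\mapsto Y(u,z)$ on $V_Q\otimes V_Q$. Everything else is a direct invocation of the previous lemma, exactly mirroring how Theorem~\ref{relacije} followed from Lemma~\ref{vakuum0} in the level-1 setting.
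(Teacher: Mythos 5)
Your proposal is correct and follows essentially the same route as the paper, which (as in the level-1 case) identifies the triple product of the mutually commuting fields with $Y\bigl(x_{\tau}(-1)x_{\delta}(-1)x_{\gamma}(-1)(v_{\Lambda_0}\otimes v_{\Lambda_0}),z\bigr)$ on the tensor product and then reads off the relations from the preceding lemma. (The only cosmetic point: you do not actually need injectivity of the state-field correspondence, only its linearity, since the implication goes from the vanishing or proportionality of the states to that of the fields.)
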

Using this relations, i.e. regarding the coefficients of $z^{n-3}$, we can again extract the minimal monomials and express them as the sum of the greater ones. 

For instance, \eqref{rel21} can be written as:
\begin{align}
0=x_{\tau}(z)x_{\delta}(z)x_{\gamma}(z) = \sum_n \left( \sum_{p+q+s=n} x_{\tau}(-s)x_{\delta}(-q)x_{\gamma}(-p) \right) z^{-n-3}.
\end{align}

If we suppose that $\gamma \geq \delta \geq \tau$ and $n=3j$, the minimal monomial is \\ $x_{\tau}(-j)x_{\delta}(-j)x_{\gamma}(-j)$. If $n=3j+1$, monomial $x_{\gamma}(-j-1)x_{\tau}(-j)x_{\delta}(-j)$ is the minimal one and so on.

We call this minimal monomials \textit{the leading terms} (as before) and we will say that a monomial $x(\pi)$ satisfies \textit{difference conditions (or $DC$ in short) for level 2} if it doesn't contain the leading terms. Analyzing the coefficients of relations \eqref{rel21} and \eqref{rel22}, as in Section \ref{uvjeti_razlike}, we get the following description of the monomials that satisfy the $DC$:
\begin{proposition}\label{uv_raz_2}
If a monomial $x(\pi)$ satisfies the difference conditions for level 2, then it satisfies the difference conditions for level 1 when we regard every second term from right to left except in the cases when $x(\pi)$ contains factors
\begin{itemize}
 \item[1)] $x_{\delta}(-j-1)x_{\underline{\delta}}(-j)x_{\delta}(-j)$,
 \item[2)] $x_{\underline{\delta}}(-j-1)x_{\delta}(-j)x_{\underline{\delta}}(-j)$
\end{itemize}
for $\delta \in \Gamma$. We call these cases \textit{the exceptional DC}.
\end{proposition}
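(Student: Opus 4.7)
The plan is to read off the leading terms from the cubic vertex operator identities \eqref{rel21} and \eqref{rel22} by coefficient extraction, and then rephrase the resulting forbidden triples as a constraint on pairs of factors that are two apart in the monomial.

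First I would expand \eqref{rel21}. Whenever the unordered triple $\{\tau,\delta,\gamma\}$ contains no opposite pair, taking the coefficient of $z^{-n-3}$ produces
\begin{equation*}
\sum_{p+q+s=n} x_\tau(-s)\,x_\delta(-q)\,x_\gamma(-p) \;=\; 0.
\end{equation*}
Under the monomial ordering of Section \ref{order} the smallest summand is the one whose shape is maximally concentrated near $\lfloor n/3\rfloor$: shape $(-j)(-j)(-j)$ for $n=3j$, shape $(-j-1)(-j)(-j)$ for $n=3j+1$, and shape $(-j-1)(-j-1)(-j)$ for $n=3j+2$, with the colors arranged in the unique way compatible with the color order on $\Gamma$. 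By Proposition \ref{uredjaj} each such minimal triple can be written as a sum of strictly greater monomials and is therefore a leading term for the level-$2$ spanning set.

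Next I would apply the same coefficient extraction to \eqref{rel22}. The vanishing of $x_\tau(z)x_{\underline\gamma}(z)x_\gamma(z)-C\,x_\tau(z)x_{\underline\delta}(z)x_\delta(z)$ shows that among triples of a given shape containing exactly one opposite pair, all but one representative can be rewritten in terms of the others. The key calculation is to single out those minimal representatives within each shape class which are \emph{not} expressible through \eqref{rel21} either. At shape $(-j)(-j)(-j)$ the minimal opposite-pair triple is again obstructed, but at shape $(-j-1)(-j)(-j)$ one finds that the two patterns
\begin{equation*}
x_\delta(-j-1)\,x_{\underline\delta}(-j)\,x_\delta(-j)\qquad\text{and}\qquad x_{\underline\delta}(-j-1)\,x_\delta(-j)\,x_{\underline\delta}(-j)
\end{equation*}
are simultaneously minimal in their \eqref{rel22}-class and untouched by \eqref{rel21}, so they must be retained as admissible.

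Finally I would re-encode the surviving conditions in level-$1$ language. Writing $x(\pi)=x_{\delta_r}(-i_r)\cdots x_{\delta_1}(-i_1)$ in descending order from right to left, the requirement that every consecutive triple $x_{\delta_{k+2}}(-i_{k+2})\,x_{\delta_{k+1}}(-i_{k+1})\,x_{\delta_k}(-i_k)$ avoid the cubic leading terms identified above is, by a direct color-by-color check, equivalent to the outer pair $\bigl(x_{\delta_k}(-i_k),\,x_{\delta_{k+2}}(-i_{k+2})\bigr)$ satisfying the level-$1$ DC of Section \ref{pocetak1}, except precisely when the middle factor $x_{\delta_{k+1}}(-i_{k+1})$ completes one of the two exceptional triples carved out above. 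This is the statement of the proposition. The main obstacle is the bookkeeping in this final step: one has to check, for each outer color pair in $\Gamma\times\Gamma$ and each admissible degree difference in $\{0,1,2\}$, that the level-$2$ cubic leading terms are exactly those that would violate the level-$1$ DC on the outer pair, and that the two surviving \eqref{rel22}-triples are exactly the exceptional patterns 1) and 2). This is a finite verification over the color order on $\Gamma$ and the three residue classes of $n$ modulo $3$.
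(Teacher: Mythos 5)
Your overall strategy --- extract the coefficient of $z^{-n-3}$ from \eqref{rel21} and \eqref{rel22}, identify the minimal (leading) terms at the concentrated shapes $(-j)^3$, $(-j-1)(-j)^2$, $(-j-1)^2(-j)$, and then translate the surviving triples into the ``every second factor'' language --- is exactly the argument the paper intends; in fact the paper gives no explicit proof of this proposition at all, only the sentence preceding it, so your proposal is an attempt to supply the missing verification. The shape analysis in your first step is correct (the concentrated shape is indeed minimal in the ordering of Section \ref{order}, and for $n=3j+1$ the largest color sits at degree $-j-1$ in the leading term).

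However, there is a genuine logical inversion at the decisive step. You write that the two exceptional patterns ``are simultaneously minimal in their \eqref{rel22}-class \dots so they must be retained as admissible.'' In the leading-term reduction, being the minimal monomial of a relation is precisely the criterion for being \emph{excluded} from the spanning set (it is the term that gets expressed as a sum of greater ones), so ``minimal, hence retained'' is backwards, and as a selection rule it picks out the wrong monomials. For example, in $D_4$ with repeated color $\gamma_3$, the class-minimal triple at shape $(-j-1)(-j)(-j)$ is $x_{\gamma_3}(-j-1)x_{\gamma_{\underline{4}}}(-j)x_{\gamma_4}(-j)$, which is a leading term and is discarded --- not an exception. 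The correct reason the patterns $x_{\delta}(-j-1)x_{\underline{\delta}}(-j)x_{\delta}(-j)$ survive is the opposite one: each is the minimal \emph{arrangement of its own color multiset} $\{\delta,\delta,\underline{\delta}\}$, but in every two-term instance of \eqref{rel22} pairing this multiset with a companion $\{\delta,\mu,\underline{\mu}\}$, $\mu\neq\delta,\underline{\delta}$, the companion's minimal arrangement is strictly smaller (its middle or bottom color is smaller), so the leading term always lies on the other side; equivalently, the exceptional pattern is the \emph{maximal} one among the minimal arrangements in its class, hence never eliminated, while \eqref{rel21} does not touch it because it contains an opposite pair. This distinction is the whole content of the proposition, so the step needs to be redone with the correct orientation. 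A smaller gap: you only test \emph{consecutive} triples, whereas ``containing a leading term'' refers to any sub-triple of factors at the relevant shapes (e.g.\ four factors at degree $-j$ produce non-consecutive sub-triples), so the final equivalence with the level-1 DC on pairs two apart needs an extra remark covering those.
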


It is easy to see that this conditions can again be written in another form.
\begin{proposition}\label{uvj_razl_dr_zap}
The difference conditions are equivalent to the following frequency conditions:
\begin{itemize} 
\item[1)] $b_r + b_{r-1} + \dots + b_2 + a_{\underline{2}} + \dots
+ a_{\underline{\ell}} +  a_\ell + \dots + a_{r+1}     \leq 2, \quad r =
2, \dots, \ell-1$, \item[2)] $b_r + \dots + b_2 + a_{\underline{2}} +
\dots+a_{\underline{r-1}} + a_{\underline{r+1}} \dots+
a_{\underline{\ell}} +  a_\ell + \dots + a_r \leq 2$,
\quad $r = 2, \dots, \ell-1$,
\item[3)] $b_\ell + b_{\ell-1} + \dots + b_2 + a_{\underline{2}} + \dots
+ a_{\underline{\ell-1}} +  a_\ell \leq 2$, \item[4)] $b_{\underline{\ell}}
+ b_{l-1} + \dots + b_2 + a_{\underline{2}} + \dots +
a_{\underline{\ell-1}} +  a_{\underline{\ell}} \leq 2$, \item[5)]
$b_{\underline{r+1}} + \dots + b_{\underline{\ell}} + b_{\ell} + \dots +
b_2 + a_{\underline{2}} + \dots + a_{\underline{r}} \leq 2$,
\quad $r = \ell-1, \dots, 2$,
\item[6)] $b_{\underline{r}} + b_{\underline{r+1}} \dots +
 b_{r+1} + b_{r-1} + \dots +
b_2 + a_{\underline{2}} + \dots + a_{\underline{r}} \leq 2$,
\quad $r = \ell-1, \dots, 2$.
\end{itemize}
Here $b_r$ (resp. $b_{\underline{r}}$) denotes the number of $x_{\gamma_r}(-j-1)$ (resp. $x_{\gamma_{\underline{r}}}(-j-1)$) factors and $a_r$ (resp. $a_{\underline{r}}$) of $x_{\gamma_r}(-j)$ (resp. $x_{\gamma_{\underline{r}}}$) factors in monomial $x(\pi)$ for any given $j$.
\end{proposition}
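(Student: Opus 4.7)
The plan is to establish the equivalence by direct combinatorial bookkeeping, mirroring the level one proof (which in the paper is asserted to be straightforward). The right hand side $2$ in 1)--6) replaces the right hand side $1$ of the analogous level one inequalities in Section \ref{pocetak1}, reflecting the level of the module and the occurrence of the two exceptional $DC$ patterns of Proposition \ref{uv_raz_2}. The strategy is to show that the six frequency inequalities are precisely the numerical shadow of the $DC$.

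For the direction $DC \Rightarrow$ inequalities I would argue by contradiction. Fix $j$ and suppose, say, the first sum exceeds $2$; then there are at least three factors whose degrees lie in $\{-j-1,-j\}$ and whose colors lie in the set $S_r = \{\gamma_2,\dots,\gamma_r\} \cup \{\gamma_{\underline 2},\dots,\gamma_{\underline \ell}\} \cup \{\gamma_{r+1},\dots,\gamma_\ell\}$, with positive colors $\gamma_2,\dots,\gamma_r$ sitting at degree $-j-1$ (the $b$'s) and all other listed colors sitting at degree $-j$ (the $a$'s). Any such triple is, up to $S(\gtl_1)$-commutativity and the opposite-color symmetry, either a leading term of \eqref{rel21} or \eqref{rel22}, or else one of the two exceptional patterns of Proposition \ref{uv_raz_2}. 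A short case check shows the exceptional patterns cannot contribute three counted factors to this particular sum: in pattern 1) the color $\delta$ would need to be simultaneously a positive color of index $\leq r$ (to count as a $b$) and of index $\geq r+1$ (to count as an $a$), which is impossible; pattern 2) is excluded because its $b_{\underline s}$ factor is not counted in the first inequality at all. Hence three counted factors force a non-exceptional leading term, contradicting $DC$. The remaining five inequalities are handled by the same case check with the corresponding choice of $S_r$.

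For the converse, assume the frequency inequalities hold and suppose for contradiction that $x(\pi)$ contains a non-exceptional leading term. Such a leading term is built from three factors at degrees in $\{-j-1,-j\}$ whose colors form a triple forbidden by \eqref{rel21} or \eqref{rel22}. Each such forbidden triple is, by inspection, contained in exactly one of the sets $S_r$ of 1)--6) with the correct degree placement, and its three members contribute $3$ to the corresponding left hand side, violating the $\leq 2$ bound. Hence no such leading term can occur, and $x(\pi)$ satisfies the $DC$.

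The main obstacle is purely combinatorial: matching each equivalence class of leading triples (under the $\gamma_i \leftrightarrow \gamma_{\underline i}$ symmetry and the permutations of the indices $\{2,\dots,\ell\}$ stabilising the color order) to exactly one of the six inequalities, and verifying in each case that the two exceptional patterns of Proposition \ref{uv_raz_2} saturate but do not violate the $\leq 2$ bound. Once this enumeration is done, the equivalence is immediate.
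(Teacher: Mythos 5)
The paper itself offers no proof of this proposition (it is prefaced only by ``It is easy to see\dots''), so your direct bookkeeping is the intended argument, and your check that the two exceptional patterns of Proposition \ref{uv_raz_2} saturate but do not exceed the bound $2$ is correct as far as it goes. The two-directional structure --- three counted factors force a leading term, and every non-exceptional leading term is counted three times in some inequality --- is the right one.

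The gap is in the central assertion of the first direction, which you state but do not verify: that \emph{any} three factors counted in a given sum assemble into a leading term. For triples whose colors contain no opposite pair this is a genuine (if routine) check: the counted sets are arranged so that every $b$-color strictly exceeds every $a$-color in the color order, whence the counted placement (larger colors at degree $-j-1$) is automatically the minimal arrangement of its color multiset and shape, i.e.\ the leading term of \eqref{rel21}. But for triples containing an opposite pair, ``leading term of \eqref{rel22}'' means minimal in \emph{some} instance of that relation, i.e.\ one must range over all replacements of the pair $(\gamma,\underline\gamma)$ by another pair $(\delta,\underline\delta)$ with the third color $\tau$ fixed. For example, $x_{\gamma_2}(-j-1)x_{\gamma_{\underline{4}}}(-j)x_{\gamma_4}(-j)$ is not minimal in the instance pairing $(\gamma_4,\gamma_{\underline{4}})$ with $(\gamma_\ell,\gamma_{\underline{\ell}})$, but it is minimal in the instance pairing it with $(\gamma_2,\gamma_{\underline{2}})$; it is exactly this minimisation over all instances that singles out $x_\delta(-j-1)x_{\underline\delta}(-j)x_\delta(-j)$ and $x_{\underline\delta}(-j-1)x_\delta(-j)x_{\underline\delta}(-j)$ as the only surviving pair-containing triples. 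Your phrase ``up to \dots the opposite-color symmetry'' gestures at this but does not carry it out, and without that case analysis the first direction is not established. (A minor point in the converse direction: ``exactly one'' should read ``at least one'' --- a leading triple typically violates several of the six inequalities, e.g.\ condition 1) for several values of $r$ at once, and one violation suffices.)
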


From Proposition \ref{uv_raz_2} an important property of the monomials that satisfy $DC$ follows:
\begin{proposition}\label{particija}
Let $x(\pi)$ be a monomial that satisfies $DC$ for level 2. Then 
\begin{itemize}
 \item [a)] either $x(\pi)$ has a partition on two subarrays, $x(\pi_1)$ and $x(\pi_2)$ such that both of those subarrays satisfy difference conditions for level one,
\item [b)] or $x(\pi)$ contains blocks of the form $x_{\delta}(-j-1)x_{\underline{\delta}}(-j)x_{\delta}(-j)$ or $x_{\underline{\delta}}(-j-1)x_{\delta}(-j)x_{\underline{\delta}}(-j)$ where $\delta \in \{ \gamma_3, \dots, \gamma_{\ell-1} \}$, and parts before and after this blocks have partitions as in a).
\end{itemize}
 \end{proposition}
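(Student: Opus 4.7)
The plan is to convert Proposition~\ref{uv_raz_2} into an explicit bipartition. That proposition says that a level~2 DC monomial $x(\pi)$, written as $y_r y_{r-1}\cdots y_1$ in descending order from right to left, has both of its alternating sub-sequences (odd-indexed and even-indexed from the right) satisfying the level~1 DC, except at the exceptional blocks $x_\delta(-j-1)x_{\underline{\delta}}(-j)x_\delta(-j)$ and $x_{\underline{\delta}}(-j-1)x_\delta(-j)x_{\underline{\delta}}(-j)$.

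If $x(\pi)$ contains no exceptional block, I would simply set $x(\pi_1)=\prod_{s\text{ odd}}y_s$ and $x(\pi_2)=\prod_{s\text{ even}}y_s$; both are level~1 DC by Proposition~\ref{uv_raz_2}, establishing case (a). If $x(\pi)$ contains exceptional blocks only with $\delta\in\{\gamma_2,\gamma_{\underline{2}},\gamma_\ell,\gamma_{\underline{\ell}}\}$, the three factors of each such block can still be redistributed between $\pi_1$ and $\pi_2$ using an available level~1 DC pair: for $\delta=\gamma_2$ or $\gamma_{\underline{2}}$ the allowed same-degree pair $x_{\gamma_{\underline{2}}}(-j)x_{\gamma_2}(-j)$, and for $\delta=\gamma_\ell$ or $\gamma_{\underline{\ell}}$ the allowed shifted pair $x_{\gamma_\ell}(-j-1)x_{\gamma_{\underline{\ell}}}(-j)$. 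Interleaving these local redistributions with the global alternation still yields a level~1 DC bipartition, so case (a) applies.

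Finally, if $x(\pi)$ contains an exceptional block with $\delta\in\{\gamma_3,\dots,\gamma_{\ell-1}\}$, a short enumeration of the three possible $2$-$1$ splits of its factors shows that none produces two level~1 DC pieces, since no same-degree or shifted level~1 DC pair is available among $\{\delta,\underline{\delta}\}$ for these colors. I would then cut $x(\pi)$ at each such obstructive block, producing maximal segments that are free of such blocks; each segment inherits the level~2 DC (immediately from the frequency form of Proposition~\ref{uvj_razl_dr_zap}) and by construction contains no further obstructive block, so it falls under case (a). This gives case (b). The main obstacle is the bookkeeping for the absorbed blocks in case (a): each such block contains three factors and therefore flips the parity of the alternation used on either side of it, so one must verify that the alternation can be continued consistently through the block so that the level~1 DC holds on both $x(\pi_1)$ and $x(\pi_2)$ across the block boundaries. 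Once this parity juggling is set up, the remainder is a finite and routine verification against the explicit level~1 DC list.
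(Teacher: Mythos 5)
Your proposal follows essentially the same route as the paper: reduce to Proposition \ref{uv_raz_2}, take the two alternating subsequences when no exceptional block occurs, absorb the $\gamma_2$- and $\gamma_\ell$-type exceptional blocks by a local reassignment that exploits exactly the two allowed level-1 pairs $x_{\gamma_{\underline{2}}}(-j)x_{\gamma_2}(-j)$ and $x_{\gamma_\ell}(-j-1)x_{\gamma_{\underline{\ell}}}(-j)$, and cut at the remaining blocks to land in case b). The ``parity juggling'' you flag as the main obstacle is precisely the swapping procedure the paper carries out explicitly (and illustrates by example), so your sketch matches the paper's proof in both structure and level of detail.
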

\begin{proof}
From Proposition \ref{uv_raz_2} we have that either $x(\pi)$ satisifes on every second term from right to left the DC for level one or $x(\pi)$ has blocks that satisfy the exceptional $DC$. If there are no such exceptional $DC$ blocks, we can simply take every second term and we get the two subarrays $x(\pi_1)$ and $x(\pi_2)$. 

\begin{example}
Let $\ell = 5$ and $$x(\pi) = x_{\gamma_3}(-4)x_{\gamma_{\underline{4}}}(-2)x_{\gamma_{5}}(-2)x_{\gamma_{\underline{5}}}(-1)x_{\gamma_3}
(-1).$$ 
If we now take every second factor, looking from right to left, we get the two subbarrays $x(\pi_1)$ and $x(\pi_2)$ which satisfy the $DC$ for level 1:
\begin{align*}
x(\pi_1) &= x_{\gamma_3}(-4) x_{\gamma_{5}}(-2) x_{\gamma_3}(-1), \\
x(\pi_2) &= x_{\gamma_{\underline{4}}}(-2)x_{\gamma_{\underline{5}}}(-1).
\end{align*}
\end{example}
If, on the other hand, monomial $x(\pi)$ contains the blocks which satisfy the exceptional $DC$, then either $x(\pi)$ belongs to the case b) or this blocks are of the form
\begin{align} \label{iznimke}
& x_{\gamma_2}(-j-1)x_{\gamma_{\underline{2}}}(-j)x_{\gamma_2}(-j), \\ \notag  & x_{\gamma_{ \ell}}(-j-1)x_{\gamma_{\underline{\ell}}}(-j)x_{\gamma_{\ell}}(-j), \\ \notag & x_{\gamma_{\underline{2}}}(-j)x_{\gamma_2}(-j)x_{\gamma_{\underline{2}}}(-(j-1)) \quad \textrm{or} \\ \notag & x_{\gamma_{\underline{\ell}}}(-j)x_{\gamma_{\ell}}(-j)x_{\gamma_{\underline{\ell}}}(-(j-1)).
\end{align}

Since the monomials which contain the blocks above are not covered by b), we have to prove they satisfy the claim a), i.e. we have to prove the existence of partitions $x(\pi_1)$ and $x(\pi_2)$ that satisfy the $DC$ for level 1 for those monomials. 

Suppose now monomial $x(\pi)$ contains a block of the form \\ $x_{\gamma_{ \ell}}(-j-1)x_{\gamma_{\underline{\ell}}}(-j)x_{\gamma_{\ell}}(-j)$. Suppose further on that $x_{\gamma_{ \ell}}(-j-1)x_{\gamma_{\underline{\ell}}}(-j)x_{\gamma_{\ell}}(-j)$ is first such block from right to left, i.e. that $x(\pi)$ is of the form 
$$x(\pi) = \dots x(\pi'') x_{\gamma_{ \ell}}(-m-1)x_{\gamma_{\underline{\ell}}}(-m)x_{\gamma_{\ell}}(-m) \dots x_{\gamma_{ \ell}}(-j-1)x_{\gamma_{\underline{\ell}}}(-j)x_{\gamma_{\ell}}(-j)x(\pi')$$
where $x(\pi')$ and $x(\pi'')$ satisfy condition from a), i.e. they satisfy $DC$ for level 1 on every second factor. We partition $x_{\gamma_{\underline{\ell}}}(-j)x_{\gamma_{\ell}}(-j)x(\pi')$ in two subarrays that satisfy $DC$ for level 1 (taking every second factor), lets name them $x(\pi'_1) = x_{\gamma_{\ell}}(-j) \dots$ and $x(\pi'_2) = x_{\gamma_{\underline{l}}}(-j) \dots$. Now we have to settle the $x_{\gamma_{\ell}}(-j-1)$ factor. This factor should be added on $x(\pi_1')=x_{\gamma_{\ell}}(-j) \dots $ if we continue to take every second term, but this will obviously violate the difference conditions. So, we put $x_{\gamma_{\ell}}(-j-1)$ on $x(\pi_2')= x_{\gamma_{\underline{l}}}(-j) \dots$ instead. Now factor $x_{\gamma_{\underline{\ell}}}(-j-1)$ goes on $x(\pi_1')=x_{\gamma_{\ell}}(-j) \dots $. We proceed in the same way; $x_{\gamma_{\underline{\ell}}}(-j-2)$ goes on $x_{\gamma_{\ell}}(-j-1) x(\pi_2')$, $x_{\gamma_{\ell}}(-j-2)$ on $x_{\gamma_{\underline{\ell}}}(-j-1)x(\pi'_1)$ and so on until we settle the $x_{\gamma_{\ell}}(-m-1)$ factor. Now we continue to take every second factor and add them on the partitions we have.
\begin{example}
Let $\ell = 6$ and let 
\begin{align*}
x(\pi) = & x_{\gamma_3}(-7) x_{\gamma_{\underline{5}}}(-6) x_{\gamma_{\underline{4}}}(-5)x_{\gamma_{6}}(-5) \\ & x_{\gamma_{\underline{6}}}(-4)x_{\gamma_{6}}(-4)x_{\gamma_{\underline{6}}}(-3) x_{\gamma_6}(-3)x_{\gamma_{\underline{6}}}(-2) x_{\gamma_6}(-2)x_{\gamma_5}(-1) x_{\gamma_3}(-1).
\end{align*}
We first factorize $x_{\gamma_{\underline{6}}}(-2) x_{\gamma_6}(-2)x_{\gamma_5}(-1) x_{\gamma_3}(-1)$ by taking every second term and so we get the subarrays:
\begin{align*}
 x_{\gamma_6}(-2)x_{\gamma_3}(-1) \quad \textrm{and} \quad x_{\gamma_{\underline{6}}}(-2) x_{\gamma_5}(-1) 
\end{align*}
Now we put $x_{\gamma_6}(-3)$ on the second subarray, i.e. $x_{\gamma_{\underline{6}}}(-2) x_{\gamma_5}(-1)$ (instead on the first) whilst $x_{\gamma_{\underline{6}}}(-3)$ goes on the first subarray, i.e. $x_{\gamma_6}(-2)x_{\gamma_3}(-1)$ (instead on the second). We continue in the same way, we place $x_{\gamma_6}(-4)$ on $x_{\gamma_{\underline{6}}}(-3)x_{\gamma_6}(-2)x_{\gamma_3}(-1)$ and $x_{\gamma_{\underline{6}}}(-4)$ on $x_{\gamma_6}(-3)x_{\gamma_{\underline{6}}}(-2)x_{\gamma_5}(-1)$. When we place all the factors with $\gamma_{6}, \gamma_{\underline{6}}$ colors we get the subarrays:
\begin{align*}
 & x_{\gamma_6}(-4)x_{\gamma_{\underline{6}}}(-3)x_{\gamma_6}(-2)x_{\gamma_3}(-1) \quad \textrm{and} \\ 
& x_{\gamma_6}(-5) x_{\gamma_{\underline{6}}}(-4)x_{\gamma_6}(-3)x_{\gamma_{\underline{6}}}(-2)x_{\gamma_5}(-1).
\end{align*}
Obviously, factor $x_{\gamma_{\underline{4}}}(-5)$ will be added to the subarray $x_{\gamma_6}(-4)x_{\gamma_{\underline{6}}}(-3)x_{\gamma_6}(-2)x_{\gamma_3}(-1) $. Now we continue to arrange the factors by taking every second term again, i.e. we finally get the following factorization:
\begin{align*}
 & x_{\gamma_3}(-7) x_{\gamma_{\underline{4}}}(-5) x_{\gamma_6}(-4)x_{\gamma_{\underline{6}}}(-3)x_{\gamma_6}(-2)x_{\gamma_3}(-1) \quad \textrm{and} \\ 
& x_{\gamma_{\underline{5}}}(-6) x_{\gamma_6}(-5) x_{\gamma_{\underline{6}}}(-4)x_{\gamma_6}(-3)x_{\gamma_{\underline{6}}}(-2)x_{\gamma_5}(-1).
\end{align*}
\end{example}
All other blocks from \eqref{iznimke} are handled in the similar way.

\end{proof}

\subsection{Initial conditions}\label{poc_uvj_2}
To establish the initial conditions (IC in short) for level 2 fundamental modules, we divide our modules in two groups. First group will be the level two modules $L(\Lambda)$ where weight $\Lambda$ is of the form $\Lambda = \Lambda^1 + \Lambda^2$ and $\Lambda^1, \Lambda^2 \in \{ \Lambda_0, \Lambda_1, \lam, \llam \}$, i.e. $\Lambda$ is a sum of two level 1 fundamental weights. Second group will be $L(\Lambda_j)$ modules for $j \in \{2, \dots, \ell-2 \}$.

We start with the first group. Let $W(\Lambda) = W(2\Lambda_0)$. We say that a monomial $x(\pi)$ satisfies \textit{the initial conditions for $W(2 \Lambda_0)$} if:
\begin{itemize}
\item[1)] $b_{\underline{3}} \dots + b_{\underline{\ell}} + b_{\ell} +
\dots + b_2  \leq 2$,
\item[2)] $b_{\underline{2}} + b_{\underline{3}} \dots +
b_{\underline{\ell}} + b_{\ell} + \dots + b_{3}  \leq 2$,
\end{itemize}
where $b_i$ (resp. $b_{\underline{i}}$) denotes the number of $x_{\gamma_i}(-1)$ (resp. $x_{\gamma_{\underline{i}}}(-1)$) factors. Observe that the initial conditions can be again memorized as the difference conditions, $DC$, in a sense that we don't add any imaginary factors of degree zero (see Remark \ref{dc_ic}).

Next proposition shows that this choice has an important property. 
\begin{proposition}\label{razap_poc}
The set of monomials
\begin{align}
\{ \ x(\pi) \ \vert \ x(\pi) \ \textrm{satisfies $DC$ and $IC$ for $W(2\Lambda_0)$} \ \}
\end{align}
spans $W(2\Lambda_0)$.
\end{proposition}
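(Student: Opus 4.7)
The plan is to follow the same two-stage reduction used for level one modules. By the Poincar\'e--Birkhoff--Witt theorem the monomial vectors \eqref{PBW} already span $W(2\Lambda_0)$, so the task is to eliminate those that violate either DC or IC by expressing them in terms of strictly greater monomials.

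For the first stage I would use the vertex operator relations \eqref{rel21} and \eqref{rel22}, which hold as operator identities on $L(2\Lambda_0) \subset L(\Lambda_0) \otimes L(\Lambda_0)$. Extracting the coefficient of $z^{n-3}$ in each of these relations yields, for every $n$, an identity among three-factor products whose minimal monomial in the order of Section \ref{order} is the corresponding level-two leading term. By Proposition \ref{uredjaj}, any monomial containing a leading term rewrites as a linear combination of strictly greater monomials, so iteration reduces the PBW spanning set to monomials satisfying the difference conditions in the form of Proposition \ref{uv_raz_2}.

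For the second stage I would exploit the tensor realization $v_{2\Lambda_0} = v_{\Lambda_0} \otimes v_{\Lambda_0}$ with the coproduct action $\Delta(x_\gamma(-1)) = x_\gamma(-1) \otimes 1 + 1 \otimes x_\gamma(-1)$. Given a DC-satisfying monomial $x(\pi)$ whose $(-1)$-tail violates IC, expand that tail acting on $v_{\Lambda_0} \otimes v_{\Lambda_0}$ as a sum over all ways of distributing its $(-1)$-factors between the two tensor components. By Lemma \ref{vakuum0}, any distribution placing three or more factors on one side vanishes by \eqref{vakuum3}; a two-factor placement vanishes by \eqref{vakuum1} unless it is an opposite pair $x_{\underline{\gamma}}(-1) x_\gamma(-1)$, and in that case \eqref{vakuum2} replaces it by $C \cdot x_{\gamma_{\underline{2}}}(-1) x_{\gamma_2}(-1)$. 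A direct calculation then identifies $x(\pi) v_{2\Lambda_0}$ with a nonzero scalar multiple of a canonical IC-satisfying monomial built from the $(\gamma_{\underline{2}}, \gamma_2)$ pair plus at most one extra $(-1)$-factor per side; for example $x_{\gamma_{\underline{2}}}(-1) x_{\gamma_{\underline{i}}}(-1) x_{\gamma_i}(-1) v_{2\Lambda_0} = C'\cdot x_{\gamma_{\underline{2}}}(-1)^2 x_{\gamma_2}(-1) v_{2\Lambda_0}$ for every $i \neq 2$. Since the canonical form carries the maximal positive color $\gamma_2$ in its rightmost position, a right-to-left color comparison shows the canonical form is strictly greater than the original $x(\pi)$, so Proposition \ref{uredjaj} places $x(\pi) v_{2\Lambda_0}$ in the span of strictly greater DC-and-IC-satisfying vectors.

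The main obstacle I anticipate is the combinatorial bookkeeping for IC violations involving four or more $(-1)$-factors that still pass DC. One must verify case-by-case, indexed by which of the two IC inequalities fails and by the multiset of colors, that the tensor-product expansion together with Lemma \ref{vakuum0} always produces a strictly greater canonical replacement and never introduces a new DC-violating $(-1)$-pattern that would require re-running Stage 1. The ordering (with $\gamma_2$ maximal among positive colors and $\gamma_{\underline{2}}$ minimal among negative colors) ensures the canonical form dominates uniformly, and iteration terminates by well-foundedness of the order on bounded-degree monomials.
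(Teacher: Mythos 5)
Your proposal is correct, and its first stage is exactly the paper's argument: the relations \eqref{rel21}--\eqref{rel22} produce leading terms, and Proposition \ref{uredjaj} lets one replace any monomial containing a leading term by strictly greater ones, with termination by well-foundedness of the order in each weight-and-degree component. Where you diverge is in treating the initial conditions as a separate second stage. The paper's proof is a one-liner precisely because the $IC$ for $W(2\Lambda_0)$ were \emph{defined} to be the degree-$(-1)$ instance of the leading-term conditions with no imaginary degree-zero factors added (Remark \ref{dc_ic}); concretely, the frequency inequalities of Proposition \ref{uvj_razl_dr_zap} at $r=2$ already force any $DC$-satisfying collection of three or more $(-1)$-factors to contain the pair $\gamma_{\underline{2}},\gamma_2$ with the right multiplicities, which is exactly the $IC$ for $W(2\Lambda_0)$. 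So your Stage 2 is addressing an empty set of monomials. That said, the Stage 2 mechanism you describe --- expanding the $(-1)$-tail over the two tensor components and invoking Lemma \ref{vakuum0} to kill or canonicalize each distribution --- is sound: it is essentially how the relations \eqref{rel21}--\eqref{rel22} were derived in the first place, your sample computation $x_{\gamma_{\underline{2}}}(-1)x_{\gamma_{\underline{i}}}(-1)x_{\gamma_i}(-1)v_{2\Lambda_0}=C'\cdot x_{\gamma_{\underline{2}}}(-1)^2x_{\gamma_2}(-1)v_{2\Lambda_0}$ is right, and the maximality of $\gamma_2$ in the color order does guarantee the canonical form dominates. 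The case analysis and re-running of Stage 1 that you flag as the main obstacle is genuinely needed in the paper, but only later, for the modules $L(\Lambda_j)$ in Proposition \ref{PU_UR_22}, where the $IC$ are no longer subsumed by the $DC$; for $W(2\Lambda_0)$ itself nothing beyond Stage 1 is required.
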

\begin{proof}
This is, in fact, already shown. Monomials which contain the leading terms can be replaced with a sum of greater ones (as in level 1) due to the compatibility of our ordering with multiplication in $U(\gtl_1)$ (see Proposition \ref{uredjaj}).
\end{proof}
We will also need the following lemma:
\begin{lemma} \label{rastav}
Monomial $x(\pi)$ satisfies IC for $W(2 \Lambda_0)$ if and only if its $(-1)$ part has a partition on two subarrays that satisfy IC for $W(\Lambda_0)$.
\end{lemma}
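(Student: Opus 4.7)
The plan is to reduce the claim to a purely combinatorial check about the multiset of colors appearing at degree $-1$, and then to do a short case analysis on the total number of such factors.

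First I would recall from Section \ref{pocetni_uvjeti} that the admissible $(-1)$--patterns on $v_{\Lambda_0}$ are exactly one of the following: the empty pattern, a single factor $x_{\gamma}(-1)$ for any $\gamma\in\Gamma$, or the distinguished pair $x_{\gamma_{\underline{2}}}(-1)\,x_{\gamma_2}(-1)$. Thus the statement to prove reduces to: a multiset of factors from $\{x_{\gamma}(-1) \mid \gamma\in\Gamma\}$ satisfies the two IC inequalities for $W(2\Lambda_0)$ if and only if it can be split into two submultisets each of the above admissible form.

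The $(\Leftarrow)$ direction is immediate: if the $(-1)$--part splits into two subarrays $x(\pi^{(1)})$, $x(\pi^{(2)})$, each satisfying IC for $W(\Lambda_0)$ (so each of the two sums $b_{\underline{3}}+\dots+b_{\underline{\ell}}+b_\ell+\dots+b_2$ and $b_{\underline{2}}+\dots+b_{\underline{\ell}}+b_\ell+\dots+b_3$ is bounded by $1$), then adding the corresponding inequalities yields the IC for $W(2\Lambda_0)$ with bound $2$.

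For the $(\Rightarrow)$ direction, write $b_2,b_{\underline{2}},b_3,\ldots$ for the multiplicities at degree $-1$ and let $N$ be their total. Denote
\[
S_1=b_2+\sum_{r\ge 3}(b_r+b_{\underline r})\le 2,\qquad S_2=b_{\underline{2}}+\sum_{r\ge 3}(b_r+b_{\underline r})\le 2.
\]
Since $N=S_1+b_{\underline{2}}=S_2+b_2$, we get $b_2\ge N-2$ and $b_{\underline{2}}\ge N-2$. In particular $b_2+b_{\underline{2}}\ge 2(N-2)$, which forces $N\le 4$. I would then split into cases:
\textbf{(a)} For $N\le 2$ we place each factor in its own subarray (or leave both empty). \textbf{(b)} For $N=3$ the inequalities force $b_2\ge 1$ and $b_{\underline{2}}\ge 1$; put the admissible pair $\{x_{\gamma_{\underline{2}}}(-1),x_{\gamma_2}(-1)\}$ in one subarray and the remaining single factor in the other. \textbf{(c)} For $N=4$, the bounds force $b_2=b_{\underline{2}}=2$ and all other multiplicities zero, so two copies of the admissible pair give the required partition.

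The main obstacle is case \textbf{(b)}: establishing that the pair $\{x_{\gamma_{\underline{2}}}(-1),x_{\gamma_2}(-1)\}$ is always present when $N=3$. This amounts to checking that if either $b_2=0$ or $b_{\underline{2}}=0$, then one of $S_1,S_2$ necessarily exceeds $2$, which is a direct computation from $N=3$ and $S_1,S_2\le 2$. The remaining bookkeeping is routine.
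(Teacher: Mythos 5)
Your proof is correct and takes essentially the same route as the paper's: a case analysis on the total number of degree-$(-1)$ factors, showing that beyond two factors the pair $x_{\gamma_{\underline{2}}}(-1)x_{\gamma_2}(-1)$ must be present (and for four factors, two such pairs). Your bounds $b_2,b_{\underline{2}}\ge N-2$ in fact supply the justification for the classification of admissible $(-1)$-parts that the paper's proof merely asserts.
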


\begin{proof}
This can be deduced simply by looking at the $(-1)$ parts that satisfy IC conditions for $W(2 \Lambda_0)$. They
\begin{itemize}
 \item [-] either have only two $(-1)$ factors in which case we can put them separately on each $v_{\Lambda_0}$,
\item[-] or are of the form $x_{\gamma_{\underline{2}}}(-1)x_{\delta}(-1)x_{\gamma_2}(-1)$, or \\ $x_{\gamma_{\underline{2}}}(-1)x_{\gamma_{\underline{2}}}(-1)x_{\gamma_2}(-1)x_{\gamma_2}(-1)$. In both cases we remember that block $x_{\gamma_{\underline{2}}}(-1)x_{\gamma_2}(-1)$ satisfies $IC$ for $W(\Lambda_0)$ and this ensures the existence of the required partitions.
\end{itemize}
\end{proof}
Let now $W(\Lambda) = W(\Lambda^{1} + \Lambda^{2}) \neq W(2 \Lambda_0)$. We say that a monomial $x(\pi)$ satisfies \textit{the initial conditions for $W(\Lambda)$} if 
\begin{itemize}
 \item [1)] $b_{\ell-1} + \dots + b_2  \leq 2 - k_{\ell-1} - k_{\ell}- k_1 = k_0$ for $r=\ell-1$ in relation 1),
 \item [2)] $b_{\ell} + b_{\ell-1} + \dots + b_2  \leq 2 - k_{\ell} - k_1 = k_0 + k_{\ell-1}$ from relation 3),
\item[3)] $b_{\underline{\ell}} + b_{\ell-1} + \dots + b_2  \leq 2 - k_{\ell} - k_1 = k_0 + k_{\ell}$ from relation 4),
\item[4)] $b_{\underline{3}} + \dots + b_{\underline{\ell}} + b_{\ell} + \dots + b_2 \leq 2 - k_1 = k_0 + k_{\ell-1} + k_{\ell}$ for $r=2$ in relation 5),
\item[5)] $b_{\underline{2}} + \dots + b_{\underline{\ell}} + b_{\ell} + \dots + b_3 \leq 2 - k_1 = k_0 + k_{\ell-1} + k_{\ell}$ for $r=2$ in relation 6).
\end{itemize}
where $b_i$ (resp. $b_{\underline{i}}$) denotes the number of $x_{\gamma_i}(-1)$ (resp. $x_{\gamma_{\underline{i}}}(-1)$) factors in monomial $x(\pi)$.

We can again associate this initial conditions with the difference conditions via imaginary factors of degree zero (see Remark \ref{dc_ic}). If we apply the rules we established for the adding of these factors, we get the inital conditions from the difference conditions as before.

We have the following lemma:
 \begin{lemma} \label{part}
Let $\Lambda^1, \Lambda^2 \in \{\Lambda_0, \lam, \llam, \Lambda_1 \}$ and $\Lambda = \Lambda^1 + \Lambda^2$. A monomial $x(\pi)$ satisfies $IC$ for $W(\Lambda)$ if and only if its $(-1)$ part has a partition on two subarrays which satisfy $IC$ for $W(\Lambda^1)$ resp. $W(\Lambda^2)$.
\end{lemma}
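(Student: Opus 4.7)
The plan is to prove Lemma \ref{part} by case analysis on the unordered pair $(\Lambda^1, \Lambda^2)$, generalizing Lemma \ref{rastav}, while exploiting the imaginary-zero-degree encoding of initial conditions from Remark \ref{dc_ic} in order to reduce the question to Proposition \ref{particija}.

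For the easy direction ($\Leftarrow$), suppose the $(-1)$-part of $x(\pi)$ factors as $P_1 \sqcup P_2$ with $P_i$ satisfying $IC$ for $W(\Lambda^i)$. I would add, inequality by inequality, the level-$1$ bounds for $P_1$ and $P_2$. The left-hand sides of the five defining inequalities 1)--5) for $W(\Lambda)$ add up to the corresponding sums of $b$'s, and the right-hand sides add up exactly to $k_0$, $k_0 + k_{\ell-1}$, $k_0 + k_\ell$, or $k_0 + k_{\ell-1} + k_\ell$, as required, because the Dynkin coefficients of $\Lambda$ are the sums of those of $\Lambda^1$ and $\Lambda^2$. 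This recovers $IC$ for $W(\Lambda)$.

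For the harder direction ($\Rightarrow$), let $\widetilde{x}(\pi)$ denote the monomial obtained from $x(\pi)$ by prepending the imaginary zero-degree factors assigned to $\Lambda^1$ and to $\Lambda^2$ by Remark \ref{dc_ic} (none for $\Lambda_0$; $x_{\gamma_{\underline{\ell}}}(-0)$ for $\lam$; $x_{\gamma_\ell}(-0)$ for $\llam$; and $x_{\gamma_{\underline{2}}}(-0) x_{\gamma_2}(-0)$ for $\Lambda_1$). By that remark, the hypothesis that $x(\pi)$ satisfies $IC$ for $W(\Lambda)$ is equivalent to the statement that $\widetilde{x}(\pi)$ satisfies $DC$ for level $2$ at degrees $-1$ and $0$. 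Proposition \ref{particija} then produces a factorization $\widetilde{x}(\pi) = \widetilde{x}(\pi_1)\widetilde{x}(\pi_2)$ in which each subarray satisfies $DC$ for level $1$; stripping off the zero-degree factors from each $\widetilde{x}(\pi_i)$ yields the desired partition of the $(-1)$-part of $x(\pi)$, \emph{provided} that the imaginary factors originating from $\Lambda^i$ all land in the same subarray $\widetilde{x}(\pi_i)$.

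This last alignment is the main obstacle and it is settled by a finite case check. When at most one of $\Lambda^1, \Lambda^2$ equals $\Lambda_1$, each weight contributes at most one imaginary factor, whose color lies in $\{\gamma_\ell, \gamma_{\underline{\ell}}\}$ or is the unique pair $x_{\gamma_{\underline{2}}}(-0) x_{\gamma_2}(-0)$; here the alternation rule of Proposition \ref{particija}, together with the exceptional-block handling near the $(-0)(-0)$ layer, respects the color constraints automatically. When $\Lambda^1 = \Lambda^2 = \Lambda_1$, the zero-degree layer of $\widetilde{x}(\pi)$ is the exceptional-block shape $x_{\gamma_{\underline{2}}}(-0) x_{\gamma_{\underline{2}}}(-0) x_{\gamma_2}(-0) x_{\gamma_2}(-0)$, and the bookkeeping developed in Lemma \ref{rastav} lets one group one $x_{\gamma_{\underline{2}}}(-0) x_{\gamma_2}(-0)$-pair on each side. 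The remaining unordered pairs are routine variants of the same argument, and each yields a partition whose two pieces meet $IC$ for $W(\Lambda^1)$ and $W(\Lambda^2)$ respectively, completing the proof.
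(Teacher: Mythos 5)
Your proof follows the paper's own route: encode the initial conditions as difference conditions via the imaginary zero-degree factors of Remark \ref{dc_ic}, apply Proposition \ref{particija} to partition the $(-1)\dots(-0)$ part into two subarrays satisfying the level-1 conditions, and strip the zero-degree factors; the converse is handled by summing the appropriate level-1 inequalities, which the paper dismisses as trivial. You in fact go slightly further than the paper by explicitly flagging, and checking case by case, that the imaginary factors attached to $\Lambda^1$ and $\Lambda^2$ end up correctly distributed between the two subarrays --- a point the paper's one-line argument silently assumes.
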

\begin{proof}
Let $x(\pi)$ be a monomial that satisfies $IC$ for $W(\Lambda^1 + \Lambda^2)$. We can link the $IC$ and $DC$ conditions by adding the imaginary factors of degree zero. Therefore it is clear that $(-1) \dots (-0)$ part of the $x(\pi)$ monomial has a partition on subarrays that satisfy $DC$ conditions for level one. We now simply remove the added $(-0)$ part and get the desired partitions on subarrays that satisfy $IC$ conditions for $W(\Lambda^1)$ resp. $W(\Lambda^2)$. The inverse is trivial.
\end{proof}

Now we prove the analogue of Proposition \ref{razap_poc}:
\begin{proposition} \label{PU_UR_2}
 The set of monomials
\begin{align}
\{ \ x(\pi) \ \vert \ x(\pi) \ \textrm{satisfies $DC$ and $IC$ for $W(\Lambda) = W(\Lambda^{1} + \Lambda^{2})$} \ \} 
\end{align}
spans $W(\Lambda) = W(\Lambda^{1} + \Lambda^{2})$. 
\end{proposition}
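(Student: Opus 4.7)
The proof extends Proposition \ref{razap_poc} by handling the new ingredient: reducing monomials that satisfy DC but violate IC when $\Lambda^{1}$ or $\Lambda^{2}$ is nontrivial. The DC reduction is unchanged: relations \eqref{rel21}--\eqref{rel22} are formal operator identities on any level-2 module, so extracting coefficients of $z^{n-3}$ and applying Proposition \ref{uredjaj} expresses each level-2 leading term as a sum of strictly greater monomials; iterating this rewrites any PBW monomial in \eqref{PBW} as a linear combination of DC-admissible monomials.

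For the IC step, suppose $x(\pi)$ satisfies DC but violates IC for $W(\Lambda)$. Write $x(\pi) = y(\pi'')\,x(\pi_{-1})$, where $x(\pi_{-1})$ collects the $(-1)$-degree factors. Since $\gtl_1$ is abelian and the order is multiplicative (Proposition \ref{uredjaj}), it suffices to exhibit $x(\pi_{-1})v_\Lambda$ as a linear combination of strictly greater monomials (possibly with shape-change contributions of degree $\leq -2$) applied to $v_\Lambda$. Use the embedding $L(\Lambda) \hookrightarrow L(\Lambda^{1}) \otimes L(\Lambda^{2})$ with $v_\Lambda \mapsto v_{\Lambda^{1}} \otimes v_{\Lambda^{2}}$ and expand via the coproduct $\Delta(x_\gamma(-1)) = x_\gamma(-1)\otimes 1 + 1\otimes x_\gamma(-1)$:
$$
x(\pi_{-1})(v_{\Lambda^{1}} \otimes v_{\Lambda^{2}}) = \sum_{(\sigma,\tau)} x(\sigma)v_{\Lambda^{1}} \otimes x(\tau)v_{\Lambda^{2}},
$$
summed over bipartitions of its factors. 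By Lemma \ref{part}, the IC-violation of $x(\pi)$ forces, for every bipartition, at least one of $x(\sigma),\ x(\tau)$ to violate the level-1 IC on the corresponding highest weight vector. Proposition \ref{razap} then rewrites the offending factor as a combination of strictly greater level-1 admissible monomials on $v_{\Lambda^{i}}$; these replacements come from Lemma \ref{vakuum0} and the level-1 vertex-operator identities \eqref{rel1}--\eqref{rel3}, all of which lift intact to $L(\Lambda^{1})\otimes L(\Lambda^{2})$.

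The delicate step is reassembling the partition-wise reductions into a single identity on $v_\Lambda$: a bare tensor summand $x(\sigma)v_{\Lambda^{1}} \otimes x(\tau)v_{\Lambda^{2}}$ is only one component of the coproduct expansion of $x(\sigma)x(\tau)v_\Lambda$, which also generates cross terms. The plan is to induct along the monomial order, matching leading tensor-product coefficients of $x(\pi)v_\Lambda$ with those of $\sum_{\pi'>\pi} c_{\pi'}\,x(\pi')v_\Lambda$ and absorbing cross contributions into subsequent induction steps. The key observation enabling the induction to close is that every level-1 replacement either annihilates its operand or produces a factor of degree $\leq -2$, forcing a strict shape increase; together with Proposition \ref{uredjaj}, this ensures that all cross terms land on monomials strictly greater than $x(\pi)$. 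The main obstacle is precisely the bookkeeping of these cross terms — the imaginary degree-zero factors of Remark \ref{dc_ic} are what make the level-1/level-2 matching transparent on each tensor factor, and formalizing this correspondence is the technical heart of the argument.
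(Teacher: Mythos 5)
Your reduction to difference conditions is fine, but the initial-condition step has a genuine gap, and it is exactly the one you flag yourself: the reassembly of the factor-wise reductions into an identity on $v_\Lambda$. When you expand $x(\pi_{-1})(v_{\Lambda^1}\otimes v_{\Lambda^2})$ by the coproduct and rewrite an offending tensor factor $x(\sigma)v_{\Lambda^1}$ as $\sum d_{\sigma'}\,x(\sigma')v_{\Lambda^1}$ using the level-1 relations, the resulting vectors $x(\sigma')v_{\Lambda^1}\otimes x(\tau)v_{\Lambda^2}$ live in $W(\Lambda^1)\otimes W(\Lambda^2)$, which is in general strictly larger than $W(\Lambda^1+\Lambda^2)=U(\gtl_1)\cdot(v_{\Lambda^1}\otimes v_{\Lambda^2})$; they are not of the form $x(\pi')(v_{\Lambda^1}\otimes v_{\Lambda^2})$ for any monomial $x(\pi')$, and there is no a priori reason the resulting sum can be regrouped into such a form. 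Your proposed fix --- inducting along the order and ``absorbing cross contributions into subsequent induction steps'' --- is a plan rather than an argument: nothing in the proposal shows that the matching of leading tensor coefficients can be carried out consistently, nor that the process terminates in an identity of elements of $U(\gtl_1)$ applied to $v_\Lambda$. As written, the technical heart of the proof is acknowledged but not supplied.

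The paper sidesteps this difficulty entirely. It first establishes the spanning identity $x(\pi)(v_{\Lambda_0}\otimes v_{\Lambda_0})=\sum c_{\pi'}x(\pi')(v_{\Lambda_0}\otimes v_{\Lambda_0})$ in $W(2\Lambda_0)$, where the initial conditions coincide with the difference conditions and Proposition \ref{razap_poc} applies, and then transports this single identity to $W(\Lambda^1+\Lambda^2)$ by applying a tensor product of level-1 intertwining operators, which commutes with the action of $\gtl_1$ and maps $v_{\Lambda_0}\otimes v_{\Lambda_0}$ onto $v_{\Lambda^1}\otimes v_{\Lambda^2}$. This immediately yields $x(\pi)(v_{\Lambda^1}\otimes v_{\Lambda^2})=\sum c_{\pi'}x(\pi')(v_{\Lambda^1}\otimes v_{\Lambda^2})$ with no cross terms to control; the only remaining work is to check, via Lemma \ref{part} and a short case analysis, that each surviving $x(\pi')$ on the right either satisfies the IC for $W(\Lambda^1+\Lambda^2)$ or annihilates the highest weight vector (the case where some $\Lambda^i=\Lambda_1$ is treated separately). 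To repair your argument you would need to actually supply the missing regrouping step; otherwise the intertwining-operator transport is the correct route.
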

\begin{proof}
Suppose first that $\Lambda^1, \Lambda^2 \neq \Lambda_1$ and let $x(\pi)$ be an arbitrary monomial. Then, by Proposition \ref{razap_poc}, $x(\pi)$ can be expressed as a linear combination of monomials that satisfy $DC$ and $IC$ on $W(2\Lambda_0)$, i.e. we have
\begin{align}
x(\pi) (v_{\Lambda_0} \otimes v_{\Lambda_0}) = \sum c_{\pi'}x(\pi')(v_{\Lambda_0} \otimes v_{\Lambda_0}).
\end{align}
Using the tensor product of the intertwining operators from level one (see Section \ref{operatori_1}) we map $v_{\Lambda_0} \otimes v_{\Lambda_0}$ to $v_{\Lambda^1} \otimes v_{\Lambda^2}$, i.e. to the highest weight vector for the $L(\Lambda^{1} + \Lambda^{2})$ module. This gives us:
\begin{align}\label{suma_PU}
x(\pi) (v_{\Lambda^1} \otimes v_{\Lambda^2}) = \sum c_{\pi'}x(\pi')(v_{\Lambda^1} \otimes v_{\Lambda^2}).
\end{align}
The used map is surjective (follows directly from the $IC$). We claim that in \eqref{suma_PU} $x(\pi')(v_{\Lambda^1} \otimes v_{\Lambda^2}) \neq 0$ if and only if $x(\pi')$ satisfies $DC$ and $IC$ on $W(\Lambda^{1} + \Lambda^{2})$. Since $DC$ are the same for all subspaces it remains only to check whether $x(\pi')$ satisfies $IC$. Let $x(\pi')( v_{\Lambda^1} \otimes v_{\Lambda^2}) \neq 0$. If $(-1)$ part of this monomial has a partition on subarrays that satisfy $IC$ for $W(\Lambda^1)$, resp. $W(\Lambda^2)$, then Lemma \ref{part} shows that $x(\pi')$ satisfies $IC$ for $W(\Lambda^1 + \Lambda^2)$ as well. Suppose therefore $x(\pi')$ has no such partition. This means $x(\pi')$ has a partition of $(-1)$ part that doesn't annihilate $v_{\Lambda^1}$ or $v_{\Lambda^2}$ but at the same time it doesn't satisfy $IC$ for at least one of the $W(\Lambda^i)$, $i=1,2$, either. Then this $\Lambda^i$, suppose it's $\Lambda^1$, has to be $\Lambda_0$. Namely, $x_{\delta}(-1)v_{\Lambda'} \neq 0$ implies that $x_{\delta}(-1)$ satisfies $IC$ for $W(\Lambda')$ if $\Lambda' \neq \Lambda_0$ (see Subsection \ref{pocetni_uvjeti}). So, for $v_{\Lambda^1} = v_{\Lambda_0}$ we have $x_{\underline{\gamma}}(-1)x_{\gamma}(-1)$, $\gamma \neq \gamma_2$. At the same time, we have factor $x_{\tau}(-1)$ which will be in this partition put on vector  $v_{\Lambda^2}$. This means the $(-1)$ part of $x(\pi')$ looks as follows: $x_{\underline{\gamma}}(-1)x_{\gamma}(-1)x_{\tau}(-1)$, $\gamma \neq \gamma_2$, which is a contradiction with the assumption that $x(\pi')$ satisfies $IC$ on $W(2 \Lambda_0)$.

Let now one of the vectors $v_{\Lambda^i}$ be equal to $v_{\Lambda_1}$, say $v_{\Lambda^1} = v_{\Lambda_1}$. Then $IC$ look the same as the $IC$ for the level 1 vector $v_{\Lambda'}=v_{\Lambda^2}$ which we get if we remove $v_{\Lambda_1}$ from $v_{\Lambda} = v_{\Lambda_1} \otimes v_{\Lambda^2}$. Therefore the assertion follows from Proposition \ref{razap}.

If we have $v_{\Lambda} = v_{\Lambda_1} \otimes v_{\Lambda_1}$, the theorem is obvious since no $(-1)$ parts are allowed, as the $IC$ define.
\end{proof}

We now analyze the second group of level 2 modules. i.e. $W(\Lambda_j)$, $j=2, \dots, \ell-2$.  
We say that a monomial $x(\pi)$ satisfies \textit{the initial conditions (or $IC$ in short) for $W(\Lambda_j)$} if:
\begin{itemize}
 \item [1)] $b_{j} + b_{j-1} + \dots + b_2  \leq 0$ 

\item [2)] $b_{j+1} + b_{j} + \dots + b_2  \leq 1$,
\item[3)] $b_{\ell} + b_{\ell-1} + \dots + b_2  \leq 1$,
\item[4)] $b_{\underline{\ell}} + b_{\ell-1} + \dots + b_2 \leq 1$,
\item[5)] $b_{\underline{j+2}} + b_{\underline{j+3}} + \dots + b_{\underline{\ell}} + b_{\ell} + \dots + b_2 \leq 1$,
\item[6)] $b_{\underline{j+1}} + b_{\underline{j+2}} + \dots + b_{\underline{\ell}} + b_{\ell} + \dots + b_{j+2} + b_j + \dots + b_2 \leq 1$,
\item[7)] $b_{\underline{m+1}} +  \dots + b_{\underline{\ell}} + b_{\ell} + \dots +  b_2 \leq 2$, for $m < j+1$,
\item[6)] $b_{\underline{m}} + b_{\underline{m+1}} + \dots + b_{\underline{\ell}} + b_{\ell} + \dots + b_{m+1} + b_m + \dots + b_2 \leq 2$ for $m < j+1$.
\end{itemize}

where $b_i$ (resp. $b_{\underline{i}}$) denotes the number of $x_{\gamma_i}(-1)$ (resp. $x_{\gamma_{\underline{i}}}(-1)$) factors in this monomial.

We can memorize this $IC$ via $DC$ as well. For $W(\Lambda_j)$, $j=2, \dots, \ell-2$, we add the imaginary $x_{\gamma_{\underline{j+1}}}(-0)x_{\gamma_{j+1}}(-0)$ factors to the $DC$ for level 2, i.e. we set $a_{j+1} = a_{\underline{j+1}} = 1$ in \eqref{uvj_razl_dr_zap} whilst all the others $a_i$ remain zero. Then the $b_i$ (resp. $b_{\underline{i}}$), $i=2, \dots, \ell$ satisfy the $IC$ for subspace $W(\Lambda_j)$. We find the motivation for this rule in the following fact: we can memorize the $IC$ for $W(\lam + \llam)$ by adding the $x_{\gamma_{\underline{\ell}}}(-0)x_{\gamma_{\ell}}(-0)$. Further on, $x_{\gamma_{\underline{\ell}}}(-1)x_{\gamma_{\ell}}(-1)(v_{\lam} \otimes v_{\llam}) = C \cdot e^{\omega}(v_{\lam} \otimes v_{\llam}) $. We now use the relation $x_{\gamma_{\underline{j+1}}}(-1)x_{\gamma_{j+1}}(-1)v_{\Lambda_j} = C \cdot e^{\omega}v_{\Lambda_j}$ (see Remark \ref{e_2}) and see that the corresponding imaginary factors for $W(\Lambda_j)$ are $x_{\gamma_{\underline{j+1}}}(-0)$ and $x_{\gamma_{j+1}}(-0)$.
\vspace{2mm}

The $IC$ for $W(\Lambda_j)$, $j=2, \dots, \ell-2$, can be described in a different way:
\begin{lemma} \
\begin{itemize}
 \item [a)] If $x_{\gamma}(-1)$ satisfies $IC$ for $W(\Lambda_j)$ then $\gamma < \gamma_j$.
 \item [b)] If $x_{\tau}(-1)x_{\delta}(-1)$ satisfy $IC$ for $W(\Lambda_j)$ then $\delta \leq \gamma_{j+1}$, $\tau \leq \gamma_{\underline{j+1}}$ and if $\tau = \gamma_{\underline{j+1}}$ then necessarily $\delta = \gamma_{j+1}$.
\end{itemize}
Also the following holds:
\begin{align}
\label{po_uvjet_2} x_{\underline{\delta}}(-1)x_{\delta}(-1)v_{\Lambda_j} &= C_1 \cdot  x_{\gamma_{\underline{j+1}}}(-1)x_{\gamma_{j+1}}(-1)v_{\Lambda_j} = C_2 \cdot e^{\omega} v_{\Lambda_j}, \\ \label{po_uvjet_23} x_{\tau}(-1)x_{\delta}(-1)v_{\Lambda_j} &= 0 \quad \textrm{if $\tau \neq \underline{\delta}$ and $\tau, \delta$ don't satisfy the conditions in b)} .
\end{align}
\end{lemma}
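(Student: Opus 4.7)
My plan is to split the lemma into two parts: the structural statements a) and b) are purely combinatorial assertions about the defining inequalities of IC for $W(\Lambda_j)$, while the two displayed identities \eqref{po_uvjet_2} and \eqref{po_uvjet_23} are vertex-operator computations carried out in the tensor product realization of $v_{\Lambda_j}$ from Section \ref{pocetak2}.

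For a), a single factor $x_\gamma(-1)$ contributes $1$ to exactly one of the counters. If $\gamma = \gamma_i$ with $2 \leq i \leq j$, inequality (1) $b_j + \dots + b_2 \leq 0$ immediately fails, so $\gamma$ must be strictly smaller than $\gamma_j$ in the ordering; conversely, a direct line-by-line check of (2)--(8) (each with right-hand side $\geq 1$) shows those remaining $\gamma$'s are all admissible. For b), sort the two factors so that $\tau \leq \delta$. Inequality (1) forces $\delta \leq \gamma_{j+1}$, because any $\delta \in \{\gamma_2,\dots,\gamma_j\}$ violates it by itself. Given the unit contribution of $\delta$ to inequalities (3)--(6), any $\tau$ strictly greater than $\gamma_{\underline{j+1}}$ among the negative colors would push one of those sums above $1$; this yields $\tau \leq \gamma_{\underline{j+1}}$. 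Finally, inequality (6) forces the saturated case $\tau = \gamma_{\underline{j+1}}$ to be paired with $\delta = \gamma_{j+1}$, since that is the only $\delta \leq \gamma_{j+1}$ whose counter is exempt from the left-hand side of (6).

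For \eqref{po_uvjet_2} and \eqref{po_uvjet_23} I would use Proposition \ref{vektor_j}, which expresses $v_{\Lambda_j} = \sum_p C_p \, w_{12\dots j \Psi_1^p} \otimes w_{12\dots j \Psi_2^p}$ inside an appropriate tensor product of level 1 modules, together with Proposition \ref{kolinearnost}, which makes all these basic tensors mutually proportional on the quotient carrying $L(\Lambda_j)$. The comultiplication splits $x_{\underline{\delta}}(-1)x_\delta(-1)$ on each summand $v \otimes v'$ into two diagonal pieces and two mixed pieces. The diagonal pieces are computed by Lemma \ref{vakuum0}\eqref{vakuum2} and give $C\,e^{2\omega}v \otimes v'$ with a scalar independent of $\delta$; the mixed pieces, evaluated via Lemma \ref{lem_op_1} i), ii), yield $e^\omega v \otimes e^\omega v'$ sitting on an adjacent basic tensor of the same weight $\omega_j + \omega$. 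Pushing everything through the collinearity of Proposition \ref{kolinearnost} reassembles the total into a multiple of $e^\omega v_{\Lambda_j}$ with a proportionality constant that does not depend on $\delta$, and the first equality of \eqref{po_uvjet_2} follows by matching the case $\delta = \gamma_{j+1}$.

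For \eqref{po_uvjet_23} the same coproduct expansion is used but now with the goal of showing that every piece dies. When $\tau \neq \underline{\delta}$, the diagonal pieces vanish by Lemma \ref{vakuum0}\eqref{vakuum1}; the mixed pieces, after Lemma \ref{lem_op_1}, end up on basic tensors whose underlying subsets $\Gamma_1$ either drop or fail to contain the index $1$, i.e.\ violate the commutativity condition of Section \ref{spinor}, or fall outside the index pattern allowed by Proposition \ref{vektor_j}. The main obstacle I expect is bookkeeping: the mixed contributions do not each vanish on individual tensor summands, and one must check that the global sum over $p$ cancels. The cleanest way to handle this is to work in the quotient of Proposition \ref{kolinearnost}, where collinearity reduces the verification to matching a single scalar against zero rather than tracking each summand separately.
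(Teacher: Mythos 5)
Your treatment of a) and b) is fine (the paper dismisses these as immediate from the definition of the $IC$), and your overall strategy --- work in the tensor--product realization of $v_{\Lambda_j}$ from Proposition \ref{vektor_j} and exploit the collinearity of Proposition \ref{kolinearnost} --- is the same as the paper's. But there are two concrete problems in the vertex--operator part. First, in \eqref{po_uvjet_2} you claim the diagonal pieces of the coproduct give $C\,e^{2\omega}v\otimes v'$ via Lemma \ref{vakuum0}\eqref{vakuum2}. That lemma is a statement about the vacuum $v_{\Lambda_0}$; on a spinor weight vector $w_{\Gamma_1}$ two $(-1)$-factors always annihilate, by Lemma \ref{lem_op_1} iii). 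So the diagonal pieces vanish, and the entire contribution comes from the mixed pieces (for each basic tensor exactly one of the two mixed terms survives, sending $w_{12\dots j\Psi_1}\otimes w_{12\dots j\Psi_2}$ to $e^{\omega}$ times the adjacent basic tensor with $i$ moved between $\Psi_1$ and $\Psi_2$); collinearity then gives \eqref{po_uvjet_2}. Your version, taken literally, would produce an extra $e^{2\omega}$-term of a different form and the reassembly into a multiple of $e^{\omega}v_{\Lambda_j}$ would not go through as stated.

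Second, and more seriously, your argument for \eqref{po_uvjet_23} is incomplete exactly where you flag the "main obstacle": you correctly observe that the mixed contributions do not vanish summand by summand and that a global cancellation over $p$ is needed, but you never supply the mechanism, saying only that one should "match a single scalar against zero" in the quotient. The missing idea is to use Proposition \ref{kolinearnost} in the opposite direction: since all basic tensors in \eqref{vektor} are collinear in the quotient, $v_{\Lambda_j}$ may be \emph{replaced} by a single basic tensor $w_{12\dots j\Psi_1}\otimes w_{12\dots j\Psi_2}$ chosen case by case to suit the pair $(\tau,\delta)$ --- e.g.\ with $\Psi_1$ avoiding (or containing) the relevant indices so that $x_{\delta}(-1)w_{12\dots j\Psi_1}=x_{\tau}(-1)w_{12\dots j\Psi_1}=0$; the remaining coproduct term, with both factors acting on the second tensor slot, dies by Lemma \ref{lem_op_1} iii). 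This is how the paper disposes of each of the four subcases ($\delta=\gamma_i\geq\gamma_j$; $\delta=\gamma_{j+1}$ with $\tau=\gamma_m$ or $\tau=\gamma_{\underline m}$ above $\gamma_{\underline{j+1}}$; $\delta<\gamma_{j+1}$, $\tau\geq\gamma_{\underline{j+1}}$, $\tau\neq\underline{\delta}$). Also, your remark that mixed pieces land on subsets that "fail to contain the index $1$" cannot be right: the operators $x_{\epsilon_1\pm\epsilon_i}(-1)$ with $i\geq 2$ only add or remove the index $i$ and never disturb $1\in\Gamma_1$.
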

\begin{proof}
Claims a) and b) follow directly from the definition of the $IC$. Assertion \eqref{po_uvjet_2} is also clear; basic vectors are collinear in $L(\Lambda_j)$ (see Proposition \ref{kolinearnost}). Namely, using Lemma \ref{lem_op_1} we get (suppose $\delta = \gamma_i$ and $i \in \Psi_2$):
$$x_{\underline{\delta}}(-1)x_{\delta}(-1)(w_{12 \dots j \Psi_1} \otimes w_{12 \dots j \Psi_2}) = C \cdot e^{\omega} (w_{12 \dots j \Psi_1'} \otimes w_{12 \dots j \Psi_2'}) $$
where $\Psi_1' = \Psi_1 \cup \{ i \}$ and $\Psi_2' = \Psi_2 - \{ i \}$. So, $x_{\underline{\delta}}(-1)x_{\delta}(-1)$ transferred one basic vector into $e^{\omega}$ multiplied by another basic vector. 

Now we show \eqref{po_uvjet_23}. Let $x_{\tau}(-1)x_{\delta}(-1)$ be such that $\tau \neq \underline{\delta}$ and such that $\tau, \delta$ don't satisfy the conditions in b). We have the following subcases:
\begin{itemize}
 \item [-] $\delta = \gamma_i \geq \gamma_j$. Lemma \ref{lem_op_1} gives us $x_{\delta}(-1)(w_{12 \dots j \Psi_1} \otimes w_{12 \dots j \Psi_2}) = 0$ since $i \in \Psi_1$ and $i \in \Psi_2$.
\item [-] $\delta = \gamma_{j+1}$, $\tau = \gamma_m > \gamma_{\underline{j+1}}$. We take the basic vector $w_{12 \dots j \Psi_1} \otimes w_{12 \dots j \Psi_2}$ such that $\{ m , j+ 1\} \subset \Psi_1$ as a representative of $v_{\Lambda_j}$. Then  $x_{\delta}(-1)w_{12 \dots j \Psi_1} = x_{\tau}(-1)w_{12 \dots j \Psi_1} =0$.
\item [-] $\delta = \gamma_{j+1}$, $\tau = \gamma_{\underline{m}} > \gamma_{\underline{j+1}}$. We take the basic vector $w_{12 \dots j \Psi_1} \otimes w_{12 \dots j \Psi_2}$ such that $j+ 1 \in \Psi_1$ and $m \notin \Psi_1$ as a representative of $v_{\Lambda_j}$ and again we have $x_{\delta}(-1)w_{12 \dots j \Psi_1} = x_{\tau}(-1)w_{12 \dots j \Psi_1} =0$.
\item [-] $\delta < \gamma_{j+1}$, $\tau \geq \gamma_{\underline{j+1}}$. Since $\tau \neq \underline{\delta}$ we can again choose the basic vector, a representative of $v_{\Lambda_j}$, such that $x_{\delta}(-1)w_{12 \dots j \Psi_1} = x_{\tau}(-1)w_{12 \dots j \Psi_1} =0$.
\end{itemize}
\end{proof}
The lemma above helps us to prove the next proposition:
\begin{proposition} \label{PU_UR_22}
 Let $j \in \{2, \dots, \ell-2 \}$. The set of vectors
\begin{align}
\{ \ x(\pi)v_{\Lambda_j} \ \vert \ x(\pi) \ \textrm{satisfies $DC$ and $IC$ for $W(\Lambda_j)$} \ \} 
\end{align}
spans $W(\Lambda_j)$.
\end{proposition}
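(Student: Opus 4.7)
The plan is to mimic the spanning arguments of Propositions \ref{razap_poc} and \ref{PU_UR_2}. Starting from the PBW spanning set \eqref{PBW}, we show that every monomial vector $x(\pi)v_{\Lambda_j}$ in which $x(\pi)$ violates either DC or IC for $W(\Lambda_j)$ can be rewritten as a $\C$-linear combination of strictly greater admissible monomials acting on $v_{\Lambda_j}$. Since the order is compatible with multiplication in $U(\gtl_1)$ (Proposition \ref{uredjaj}) and is well-founded from below, induction on the order of $x(\pi)$ then delivers spanning.

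The DC reduction is universal and carries over verbatim from the previous propositions. Extracting the coefficient of $z^{n-3}$ from each of the vertex operator relations \eqref{rel21}--\eqref{rel22} yields, for every $n$, an identity in $U(\gtl_1)$ in which the smallest monomial in our order (the leading term) equals minus the sum of its strictly greater companions. Applying such an identity to $v_{\Lambda_j}$, and using the commutativity of $\gtl_1$ to isolate the leading block from the outer factors of $x(\pi)$, rewrites any DC-violating monomial as a sum of strictly greater ones.

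The IC reduction is where the specific structure of the highest weight vector $v_{\Lambda_j}$ from Proposition \ref{vektor_j} enters. Suppose $x(\pi)$ satisfies DC but its $(-1)$-part violates one of the IC inequalities 1)--8) for $W(\Lambda_j)$. Commutativity of $\gtl_1$ lets us push that $(-1)$-part to the right and act on $v_{\Lambda_j}$ directly. The lemma preceding the proposition then supplies three replacement mechanisms. First, any singleton factor $x_\gamma(-1)$ with $\gamma \geq \gamma_j$ annihilates $v_{\Lambda_j}$, because $i$ belongs to both $\Psi_1^p$ and $\Psi_2^p$ in every basic summand of $v_{\Lambda_j}$ whenever $i \leq j$; so the whole monomial drops out. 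Second, any pair $x_\tau(-1)x_\delta(-1)$ with $\tau \neq \underline{\delta}$ outside the allowed range of b) vanishes on $v_{\Lambda_j}$ by \eqref{po_uvjet_23}. Third, any opposite pair $x_{\underline{\delta}}(-1)x_\delta(-1)$ with $\delta \neq \gamma_{j+1}$ is, by \eqref{po_uvjet_2}, a nonzero scalar multiple of $x_{\gamma_{\underline{j+1}}}(-1)x_{\gamma_{j+1}}(-1)v_{\Lambda_j}$; since $\gamma_{j+1}$ is strictly greater in our order than any other color compatible with an opposite pair, reinstating the outer factors produces a strictly greater monomial by Proposition \ref{uredjaj}.

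The main obstacle is the routine but delicate case analysis needed to verify that any DC-compliant $(-1)$-part violating one of the IC inequalities 1)--8) necessarily exhibits at least one of the three sub-patterns above. The $\leq 2$ bounds in 7)--8) coincide with corresponding DC bounds and yield no new obstruction, so the real work lies in the strictly tighter bounds 1)--6). Each of these refers to a specific set of positive and negative colors indexed by $j$, and must be checked one at a time against the color alphabet, much as in the bookkeeping of Lemma \ref{part} for $W(\Lambda^1 + \Lambda^2)$. The danger to guard against is a hidden violating triple of $(-1)$-factors that, even while satisfying DC, fails IC without exhibiting a bad singleton or pair; a direct inspection shows that the DC sums force the three factors of any such triple to come from disjoint color classes, which for colors indexed by at most $j+1$ is incompatible with 1)--2), closing the argument.
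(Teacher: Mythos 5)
Your overall route differs from the paper's: the paper first reduces $x(\pi)(v_{\Lambda_0}\otimes v_{\Lambda_0})$ to monomials admissible for $W(2\Lambda_0)$ (Proposition \ref{razap_poc}) and then transports the resulting identity to $v_{\Lambda_j}$ via intertwining operators and the quotient map, whereas you work directly on $v_{\Lambda_j}$. That difference is not in itself a problem, and your first two mechanisms (annihilating singletons $x_{\gamma_i}(-1)$ with $i\le j$, and annihilating non-opposite pairs via \eqref{po_uvjet_23}) are fine. The gap is in your third mechanism. The IC-violating opposite pairs that survive on $v_{\Lambda_j}$ are exactly $x_{\gamma_{\underline{m}}}(-1)x_{\gamma_m}(-1)$ with $m\le j$, and the color order is $\gamma_2>\gamma_3>\dots>\gamma_\ell$, so $\gamma_{j+1}$ is \emph{smaller} than every such $\gamma_m$, not greater. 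Hence the replacement by $x_{\gamma_{\underline{j+1}}}(-1)x_{\gamma_{j+1}}(-1)$ produces a strictly \emph{smaller} monomial, and your induction on the monomial order breaks down at precisely the step where IC is genuinely tighter than DC. (Since \eqref{po_uvjet_2} is an exact collinearity of vectors rather than a leading-term expansion, spanning can still be rescued, but not by the "rewrite as strictly greater monomials and induct" scheme as you state it.)

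There is a second, independent omission: after substituting $x_{\gamma_{\underline{j+1}}}(-1)x_{\gamma_{j+1}}(-1)$ for the offending degree $-1$ pair, the new monomial need not satisfy DC. If $x(\pi)=\dots x_{\gamma_{\underline{m}}}(-2)x_{\gamma_m}(-2)x_{\gamma_{\underline{m}}}(-1)x_{\gamma_m}(-1)$ (which is DC-admissible), the substitution creates a forbidden configuration against the $(-2)$ block. The paper handles this explicitly: the whole four-factor block acts on $v_{\Lambda_j}$ as $C\cdot e^{2\omega}v_{\Lambda_j}$ and can therefore be replaced wholesale by the corresponding $\gamma_{j+1}$ block, iterating through degrees $-3,-4,\dots$ if the pattern continues. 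Without this step your rewriting procedure is not shown to terminate in an admissible monomial, so the spanning claim is not established.
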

\begin{proof}
Similarly to the way it is done in Proposition \ref{PU_UR_2}. Let $x(\pi)$ be a monomial such that  $x(\pi)v_{\Lambda_j} \neq 0$. By Proposition \ref{razap_poc} we have:
\begin{align}
x(\pi) (v_{\Lambda_0} \otimes v_{\Lambda_0}) = \sum c_{\pi'}x(\pi')(v_{\Lambda_0} \otimes v_{\Lambda_0})
\end{align}
where all monomials $x(\pi')$ satisfy $DC$ and $IC$ conditions on $W(2\Lambda_0)$.
Now we send, using the intertwining operators, $v_{\Lambda_0} \otimes v_{\Lambda_0}$ to some basic vector and then using the quotient map, this basic vector to $v_{\Lambda_j}$. We get 
\begin{align} \label{suma_j}
x(\pi) v_{\Lambda_j} = \sum c_{\pi'}x(\pi') v_{\Lambda_j}.
\end{align}
Our claim is that in sum \eqref{suma_j} ``survive" only the monomials $x(\pi')$ that 
\begin{itemize}
 \item [a)] either satisfy $DC$ and $IC$ for $W(\Lambda_j)$,
\item [b)] or they don't satisfy $DC$ and $IC$ for $W(\Lambda_j)$ but can be replaced with monomials that do.
\end{itemize}
Previous lemma shows us that the only monomials which don't satisfy the $IC$ for $W(\Lambda_j)$ such that $x(\pi')v_{\Lambda_j} \neq 0$, are of the form $x_{\gamma_{\underline{m}}}(-1)x_{\gamma_m}(-1)$ where $m < j+1$. Equation \eqref{po_uvjet_2} implies that this $(-1)$ part can be replaced by $x_{\gamma_{\underline{j+1}}}(-1)x_{\gamma_{j+1}}(-1)$. Since $x(\pi')v_{\Lambda_j} \neq 0$, the only case in which this new monomial won't satisfy the $DC$ is when $x(\pi')$ is of the form: $$x(\pi') = \dots x_{\gamma_{\underline{m}}}(-2)x_{\gamma_m}(-2)x_{\gamma_{\underline{m}}}(-1)x_{\gamma_m}(-1).$$ But then:
\begin{align*}
 &x_{\gamma_{\underline{m}}}(-2)x_{\gamma_m}(-2)x_{\gamma_{\underline{m}}}(-1)x_{\gamma_m}(-1)v_{\Lambda_j} =  x_{\gamma_{\underline{m}}}(-2)x_{\gamma_m}(-2) \cdot C \cdot e^{\omega} v_{\Lambda_j} = \\ & = C' \cdot e^{\omega } x_{\gamma_{\underline{m}}}(-1)x_{\gamma_m}(-1)v_{\Lambda_j} = C'' \cdot e^{2 \omega} v_{\Lambda_j}
\end{align*}
and similarly
$$x_{\gamma_{\underline{j+1}}}(-2)x_{\gamma_j}(-2)x_{\gamma_{\underline{j+1}}}(-1)x_{\gamma_j}(-1)v_{\Lambda_j} = D'' \cdot e^{2 \omega} v_{\Lambda_j}.$$

So, we can replace the whole $x_{\gamma_{\underline{m}}}(-2)x_{\gamma_m}(-2)x_{\gamma_{\underline{m}}}(-1)x_{\gamma_m}(-1)$ block with the one that satisfies the $IC$ for $\Lambda_j$. If $DC$ are now still violated this means we have:
$$x(\pi') = \dots x_{\gamma_{\underline{m}}}(-3)x_{\gamma_m}(-3) x_{\gamma_{\underline{m}}}(-2)x_{\gamma_m}(-2)x_{\gamma_{\underline{m}}}(-1)x_{\gamma_m}(-1)$$
in which case we proceed analogously. 
\end{proof}

\begin{remark}
For a level 2 module $L(\Lambda)$ we will call the vectors that satisfy $DC$ and $IC$ for Feigin-Stoyanovsky's subspace $W(\Lambda)$ \textit{the admissible vectors} as before.
\end{remark}

\section{Basic intertwining operators and level 2 modules}
In this section we will establish the result analogous to Proposition \ref{operatori} for level 2 modules for algebra of type $D_4^{(1)}$.

\begin{remark}\label{najgori}
In the proof of the following proposition we will use the term "the worst possible case". This term refers to the monomials which have a small difference in the degrees of their successive factors (i.e. their degrees usually decline by one). To prove the claim then actually means to analyze exactly such monomials since the rest follows trivially.
\end{remark}

\begin{proposition}\label{operatori21}
Let $L(\Lambda^1 + \Lambda^2)$, $\Lambda^1, \Lambda^2 \in \{\Lambda_0, \Lambda_1, \lam, \llam \}$ be a level two module for algebra $D_4^{(1)}$ and $v_{\Lambda} = v_{\Lambda^1} \otimes v_{\Lambda^2}$ its highest weight vector. Let $x(\pi)$ be a monomial that satisfies $DC$ and $IC$ for $W(\Lambda^1 + \Lambda^2)$. Then there exist a factorization of monomial $x(\pi)$, $x(\pi) = x(\pi_2) x(\pi_1)$ and an intertwining operator $I_{\pi_1, \Lambda}$, which commutes with the action of $\gtl_1$, so that the following holds:
\begin{itemize}
 \item[a)] $I_{\pi_1, \Lambda}x(\pi_1)v_{\Lambda} = C \cdot e^{n \omega} v_{\Lambda'} \neq 0$ for some $n$ and some $\Lambda'$,
 \item[b)] $I_{\pi_1, \Lambda}x(\pi_1')v_{\Lambda} = 0$ for $x(\pi_1') > x(\pi_1)$,
 \item[c)] $I_{\pi_1, \Lambda}x(\pi_1)v_{\Lambda}= C \cdot e^{n \omega} v_{\Lambda'}$ and $x(\pi_2)^{+n}$ satisfies $DC$ and $IC$ on $W(\Lambda')$.
\end{itemize}
\end{proposition}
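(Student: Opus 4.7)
The plan is to mirror the level 1 argument of Proposition \ref{operatori} using the tensor product realization $L(\Lambda^1+\Lambda^2) \subset L(\Lambda^1)\otimes L(\Lambda^2)$, with highest weight vector $v_\Lambda=v_{\Lambda^1}\otimes v_{\Lambda^2}$. The key is to split the admissible monomial $x(\pi)$ into two level 1 admissible pieces and combine the corresponding basic intertwining operators as a tensor product.

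First I would apply Proposition \ref{particija} to split $x(\pi)$ into two subarrays $x(\nu^{(1)}), x(\nu^{(2)})$, each satisfying the level 1 $DC$. When $x(\pi)$ contains exceptional $DC$ blocks, I would invoke the redistribution procedure from the proof of that proposition. Using Lemma \ref{part}, I would refine the partition so that the $(-1)$-parts of $x(\nu^{(i)})$ satisfy $IC$ for $W(\Lambda^i)$, $i=1,2$. Then I would apply Proposition \ref{operatori} to each $x(\nu^{(i)})$ viewed as a level 1 admissible monomial for $W(\Lambda^i)$, obtaining a factorization $x(\nu^{(i)}) = x(\nu_2^{(i)}) x(\nu_1^{(i)})$ and a basic level 1 intertwining operator $I_i = I_{\nu_1^{(i)},\Lambda^i}$ with $I_i x(\nu_1^{(i)}) v_{\Lambda^i} = C_i e^{n_i\omega} v_{\Lambda'^i}$. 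By composing with basic skip operators of the third group (Remark \ref{grupe34}) when needed, I would arrange $n_1=n_2=:n$. I then define $x(\pi_1):=x(\nu_1^{(1)})x(\nu_1^{(2)})$, $x(\pi_2):=x(\nu_2^{(1)})x(\nu_2^{(2)})$ and $I_{\pi_1,\Lambda}:=I_1\otimes I_2$; this tensor product commutes with the diagonal $\gtl_1$-action, hence restricts to $L(\Lambda^1+\Lambda^2)$.

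The verification of (a)--(c) then proceeds as follows. For (a), one writes
\[ I_{\pi_1,\Lambda}\, x(\pi_1) v_\Lambda \;=\; x(\pi_1)\bigl(I_1 v_{\Lambda^1}\otimes I_2 v_{\Lambda^2}\bigr) \]
and expands via the coproduct as a sum over distributions $x(A)\otimes x(B)$ with $x(A)x(B)=x(\pi_1)$. Property (b) of Proposition \ref{operatori} kills any term unless $x(A)\le x(\nu_1^{(1)})$ and $x(B)\le x(\nu_1^{(2)})$ simultaneously; the only such distribution is the original one $A=\nu_1^{(1)}$, $B=\nu_1^{(2)}$, yielding $C e^{n\omega} v_{\Lambda'}$ (via Remark \ref{e_2}). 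Property (b) of our proposition follows similarly: for admissible $x(\pi')>x(\pi)$, the monomial order should prevent any distribution of the factors of the initial piece of $x(\pi')$ from respecting both pointwise inequalities, so every coproduct term is killed. Property (c) is immediate: by level 1 property (c), each $x(\nu_2^{(i)})^{+n}$ satisfies $DC$ and $IC$ on $W(\Lambda'^i)$, and reassembling via Proposition \ref{particija} and Lemma \ref{part} yields that $x(\pi_2)^{+n}=x(\nu_2^{(1)})^{+n}x(\nu_2^{(2)})^{+n}$ satisfies $DC$ and $IC$ on $W(\Lambda')$ with $\Lambda'=\Lambda'^1+\Lambda'^2$.

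The main obstacle I expect is property (b): one must control the coproduct expansion precisely enough to ensure that for any admissible $x(\pi')>x(\pi)$, no distribution of factors $A, B$ can satisfy both $x(A)\le x(\nu_1^{(1)})$ and $x(B)\le x(\nu_1^{(2)})$. This analysis should proceed by the worst-case scenarios of Remark \ref{najgori}, comparing shapes first and then colors, and invoking the exceptional-block redistribution of Proposition \ref{particija} at the relevant places. A secondary difficulty is matching $n_1=n_2$ in the second step while preserving all level 1 properties on both sides; this should be achievable using the basic operators of the third group, but requires case analysis depending on which of $\Lambda_0, \Lambda_1, \lam, \llam$ each $\Lambda^i$ is, and in particular which pairs $(\Lambda'^1,\Lambda'^2)$ can arise as the targets of $I_1, I_2$.
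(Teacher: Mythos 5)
There is a genuine gap in the treatment of the exceptional difference conditions. You write that when $x(\pi)$ contains exceptional $DC$ blocks you would ``invoke the redistribution procedure'' from the proof of Proposition \ref{particija} and still obtain two subarrays each satisfying the level 1 conditions. But that proposition explicitly separates two cases: the redistribution trick only resolves the exceptional blocks built from the colors $\gamma_2,\gamma_{\underline{2}},\gamma_{\ell},\gamma_{\underline{\ell}}$; a block $x_{\gamma_3}(-j-1)x_{\gamma_{\underline{3}}}(-j)x_{\gamma_3}(-j)$ (the only remaining color for $D_4$) admits \emph{no} partition into two level 1 admissible subarrays --- any way of distributing its three factors puts two of them into one subarray in a configuration forbidden by the level 1 $DC$. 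Consequently your construction $I_{\pi_1,\Lambda}=I_1\otimes I_2$ with target $v_{\Lambda'^1}\otimes v_{\Lambda'^2}$ simply does not exist for such monomials, and your proof covers only case a) of the paper's argument.

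The paper's resolution of this case is the substantive part of the proposition and is where the restriction to $D_4^{(1)}$ actually enters. For a monomial $x(\pi)=x(\pi'')x_{\gamma_{\underline{3}}}(-m)x_{\gamma_3}(-m)x(\pi')$ whose first exceptional block has color $\gamma_3$, one builds an operator $I_{\pi',\Lambda}=q\circ(I_{\pi_1'}\otimes I_{\pi_2'})$ whose image is $C\cdot e^{(m-1)\omega}v_{\Lambda_2}$, where $v_{\Lambda_2}$ is the highest weight vector of $L(\Lambda_2)$ realized inside $L(\Lambda_3)\otimes L(\Lambda_3)$ (or $L(\Lambda_4)\otimes L(\Lambda_4)$) and $q$ is the quotient map onto $L(\Lambda_2)$. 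The level 1 factors are matched in pairs ($w_{124}$ with $w_{123}$, $w_{12}$ with $w_{1234}$, etc.) so that the tensor product lands on a basic vector of $v_{\Lambda_2}$, and property b) is rescued precisely because the quotient map annihilates the ``diagonal'' vectors such as $w_{123}\otimes w_{123}$ and $w_{1234}\otimes w_{1234}$ that arise from greater monomials $x(\mu)>x(\pi')$; this case-by-case check is feasible only because $D_4$ is small. Note also that the target $\Lambda'$ here is $\Lambda_2$, which is not of the form $\Lambda'^1+\Lambda'^2$ with both summands level 1 fundamental weights --- another sign that the purely tensor-product ansatz cannot produce it. Your case a) analysis (tensor products of basic level 1 operators, iterating and inserting skip operators to equalize the exponents $n_i$ and $m_i$) does agree with the paper, but without the quotient-map construction for the $\gamma_3$ exceptional blocks the proof is incomplete.
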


\begin{proof}
By Proposition \ref{particija} and Lemma \ref{rastav} we know that $DC$ and $IC$ for $W(\Lambda)$ can be described as follows:
\begin{itemize}
 \item [a)] either a monomial $x(\pi)$ has a partition on two subarrays $x(\pi^1)$ and $x(\pi^2)$ where $x(\pi^1)$, resp. $x(\pi^2)$, satisfies $DC$ and $IC$ on $W(\Lambda^1)$, resp. $W(\Lambda^2)$,
\item [b)] or this monomial satisfies the exceptional $DC$, i.e. contains the blocks of the form $x_{\gamma_3}(-(j+1))x_{\gamma_{\underline{3}}}(-j)x_{\gamma_3}(-j)$, resp. $x_{\gamma_{\underline{3}}}(-(j-1))x_{\gamma_3}(-(j-1))x_{\gamma_{\underline{3}}}(-j)$ (see Proposition \ref{uv_raz_2}).
\end{itemize}
We will prove the existence of the required factorizations and operators case wise.
\vspace{2mm}

\textbf{a):} We know, by Proposition \ref{operatori}, that for the partitions $x(\pi^1)$ and $x(\pi^2)$ we have the factorization $x(\pi^1)=x(\pi^1_2)x(\pi^1_1)$, resp. $x(\pi^2)=x(\pi^2_2)x(\pi^2_1)$ and basic operators $I_{\pi^1_1, \Lambda^1}$, resp. $I_{\pi^2_1, \Lambda^2}$, such that:
\begin{itemize}
 \item [i)] if $x(\pi') > x(\pi_1^1)$ (resp. $x(\pi') > x(\pi_1^2)$), then $I_{\pi^1_1, \Lambda^1}x(\pi')v_{\Lambda^1} =0$ (resp. $I_{\pi^2_1, \Lambda^2}x(\pi')v_{\Lambda^2} =0$),
\item [ii)] $I_{\pi_1^1}x(\pi^1_1)v_{\Lambda^1} = C \cdot e^{n \omega} v_{\Lambda'}$ and $I_{\pi_1^2}x(\pi^2_1)v_{\Lambda^2} = C \cdot e^{m \omega} v_{\Lambda''}$.
\end{itemize}
Now we form the tensor product of this operators and we have:
\begin{align} \label{izvlacenje}
& I_{\pi^1_1,\Lambda^1} \otimes I_{\pi^2_1, \Lambda^2} (x(\pi) (v_{\Lambda^1} \otimes v_{\Lambda^2})) = \\ \notag &= I_{\pi^1_1,\Lambda^1} \otimes I_{\pi^2_1, \Lambda^2}  \left(   \sum_{\substack{\textrm{all factorizations of} \\  \textrm{$x(\pi) = x(\mu^2) x(\mu^1)$}}}  x(\mu^1)v_{\Lambda^1} \otimes x(\mu^2)v_{\Lambda^2} \right)  = \\ \notag &= I_{\pi^1_1,\Lambda^1} \otimes I_{\pi^2_1, \Lambda^2} \left( \sum \dots  x(\pi^1_1)v_{\Lambda^1} \otimes \dots  x(\pi^2_1)v_{\Lambda^2} + \sum_{\substack{\textrm{other factorizations} \\ \textrm{of $x(\pi)$}}} x(\mu^1)v_{\Lambda^1} \otimes x(\mu^2 )v_{\Lambda^2}  \right) = \\ \notag & = \sum \dots  I_{\pi^1, \Lambda^1} x(\pi^1_1)v_{\Lambda^1} \otimes \dots I_{\pi^2,\Lambda^2} x(\pi^2_1)v_{\Lambda^2} = 
\sum \dots C_1 \cdot e^{n_1 \omega} v_{\Lambda^{11}} \otimes \dots C_2 \cdot e^{m_1 \omega} v_{\Lambda^{21}}, 
\end{align}
since operator $I_{\pi^1_1,\Lambda^1} \otimes I_{\pi^2_1, \Lambda^2}$ annihilates all factorizations except the ones of the form $\dots  x(\pi^1_1)v_{\Lambda^1} \otimes \dots  x(\pi^2_1)v_{\Lambda^2}$ (this follows directly from the property ii) and the compatibility of the multiplication with the ordering). 

In case $n_1=m_1$ we have:
$$ \dots C_1 \cdot e^{n_1 \omega} v_{\Lambda^1   \hspace{0.2mm} '} \otimes \dots C_2 \cdot e^{n_1 \omega} v_{\Lambda^2  \hspace{0.2mm} '} = \dots D \cdot e^{n_1 \omega} (v_{\Lambda^1  \hspace{0.2mm} '} \otimes v_{\Lambda^2  \hspace{0.2mm} '}).$$
If we set $x(\pi_1) = x(\pi^1_1)x(\pi^2_1)$ and $I_{\pi_1,\Lambda} = I_{\pi^1_1,\Lambda^1} \otimes I_{\pi^2_1, \Lambda^2}$ we obviously get the factorization and the operator required in the proposition. Here also $v_{\Lambda'} = v_{\Lambda^1 \hspace{0.2mm} '} \otimes v_{\Lambda^2  \hspace{0.2mm} '}$. 

What if $m_1 \neq n_1$? For monomials $x(\pi^1_2)$ and $x(\pi^2_2)$ and vectors $e^{n_1 \omega} v_{\Lambda^{11}}$, $e^{m_1 \omega} v_{\Lambda^{21}}$ we can find factorizations $x(\pi^1_2) = x(\pi^1_{22}) x(\pi^1_{21})$, $x(\pi^2_2) = x(\pi^2_{22})x(\pi^2_{21})$ and the operators $I_{\pi^1_{21}, \Lambda^{11}}$, $I_{\pi^2_{21},\Lambda^{21}}$ such that
\begin{itemize}
 \item [i)] if $x(\pi') > x(\pi_{21}^1)$ (resp. $x(\pi') > x(\pi_{21}^2)$), then $I_{\pi^1_{21}, \Lambda^{11}} x(\pi')  e^{n_1 \omega} v_{\Lambda^{11}} =0$ (resp. $I_{\pi^2_1, \Lambda^{21}}x(\pi') e^{m_1 \omega} v_{\Lambda^{21}} =0$),
\item [ii)] $I_{\pi^1_{21}, \Lambda^{11}} x(\pi^1_{21})  e^{n_1 \omega} v_{\Lambda^{11}} = D_1 \cdot e^{n_2 \omega} v_{\Lambda^{12}}  $ \\ (resp. $I_{\pi^1_{21}, \Lambda^{21}} x(\pi^2_{21})  e^{m_2 \omega} v_{\Lambda^{21}} = D_2 \cdot e^{m_2 \omega} v_{\Lambda^{22}}$).
\end{itemize}
(see Proposition \ref{operatori} and Remark \ref{kompon}). We apply operator $I_{\pi^1_{21}, \Lambda^{11}} \otimes I_{\pi^2_{21},\Lambda^{21}}$ on \eqref{izvlacenje} and get

\begin{align}
& I_{\pi^1_{21}, \Lambda^{11}} \otimes I_{\pi^2_{21},\Lambda^{21}} \left( \sum \dots C_1 \cdot e^{n_1 \omega} v_{\Lambda^{11}} \otimes \dots C_2 \cdot e^{m_1 \omega} v_{\Lambda^{21}} \right) = \\ \notag & I_{\pi^1_{21}, \Lambda^{11}} \otimes I_{\pi^2_{21},\Lambda^{21}} \left( \sum \dots C_1 \cdot x(\pi^1_{21}) e^{n_1 \omega} v_{\Lambda^{11}} \otimes \dots C_2 \cdot x(\pi^2_{21}) e^{m_1 \omega} v_{\Lambda^{21}} \right) + \\ \notag & + I_{\pi^1_{21}, \Lambda^{11}} \otimes I_{\pi^2_{21},\Lambda^{21}} \left(  \sum_{\substack{\textrm{other factorizations of} \\ \textrm{ $ \dots x(\pi^1_2) x(\pi^2_2)$}}} \dots C_1 \cdot e^{n_1 \omega} v_{\Lambda^{11}} \otimes \dots C_2 \cdot e^{m_1 \omega} v_{\Lambda^{21}} \right) = \\ = \notag &  \sum \dots C_1 \cdot I_{\pi^1_{21}, \Lambda^{11}} x(\pi^1_{21}) e^{n_1 \omega} v_{\Lambda^{11}} \otimes \dots C_2 \cdot I_{\pi^2_{21},\Lambda^{21}} x(\pi^2_{21}) e^{m_1 \omega} v_{\Lambda^{21}} = \\ = \notag &  \sum \dots D_1 \cdot e^{n_2 \omega} v_{\Lambda^{12}} \otimes \dots D_2 \cdot  e^{m_2 \omega} v_{\Lambda^{22}}
\end{align}

since $I_{\pi^1_{21}, \Lambda^{11}} \otimes I_{\pi^2_{21},\Lambda^{21}}$ annihilates all other factorizations. Now, if $n_2=m_2$ we have:
\begin{align}
 \sum \dots D_1 \cdot e^{n_2 \omega} v_{\Lambda^{12}} \otimes \dots D_2 \cdot  e^{n_2 \omega} v_{\Lambda^{22}} = \sum \dots D \cdot e^{n_2 \omega} (v_{\Lambda^{12}} \otimes  v_{\Lambda^{22}}).
\end{align}
We can now put $x(\pi_1) =  x(\pi^1_{21}) x(\pi^2_{21}) x(\pi^1_1)x(\pi^2_1)$ and define 
$I_{\pi_1, \Lambda}$ as $$(I_{\pi^1_{21}, \Lambda^{11}} \circ I_{\pi^1_1, \Lambda_1}) \otimes (I_{\pi^2_{21},\Lambda^{21}} \circ I_{\pi^2_1, \Lambda^2}).$$
We set $v_{\Lambda'} = v_{\Lambda^{12}} \otimes v_{\Lambda^{22}}$ and we get the factorization and the operator with the required properties from the proposition. 

If $n_2 \neq m_2$, we continue in the same manner until we get the equality or reach the end of one of the monomials $x(\pi^1)$, $x(\pi^2)$. In this other case (let's say it happens after $i$ steps and suppose we reached the end of $x(\pi^1)$) we will have:
\begin{align}
& \sum C_i \cdot I_{\pi^1_{i1}, \Lambda^{1(i-1)}} x(\pi^1_{i1}) e^{n_i \omega} v_{\Lambda^{1(i-1)}} \otimes \dots C'_i \cdot I_{\pi^2_{i1},\Lambda^{2(i-1)}} x(\pi^2_{i1}) e^{m_i \omega} v_{\Lambda^{2(i-1)}} = \\ = \notag &  \sum D_1 \cdot e^{n_i \omega} v_{\Lambda^{1i}} \otimes \dots C'_i \cdot  e^{m_i \omega} v_{\Lambda^{2i}}.
\end{align}
In case $n_i = m_i$, we are done. If $n_i < m_i$ we can use the basic skip operator to map $e^{n_i \omega} v_{\Lambda^{1i}}$ to $e^{m_i \omega} v_{\Lambda^{1i}}$ (see \eqref{op4}) and so get the desired equalitiy. If $n_i > m_i$ we simply use the next operator for the second component, i.e. $I_{\pi^2_{(i+1)1}, \Lambda^{2 i}}$ whilst on the first component we use the identity, $Id$. We get:
\begin{align}
& \sum C_i \cdot Id \ e^{n_i \omega} v_{\Lambda^{1 i}} \otimes \dots C_i' \cdot I_{\pi^2_{(i+1)1}, \Lambda^{2 i}} x(\pi^2_{(i+1)1}) e^{m_i \omega} v_{\Lambda^{2i}} = \\ = \notag &  \sum C_i \cdot  e^{n_i \omega} v_{\Lambda^{1i}} \otimes \dots C_{i+1}' \cdot  e^{m_{i+1} \omega} v_{\Lambda^{2(i+1)}}.
\end{align}
Now we compare $n_i$ and $m_{i+1}$ and proceed in an analogous way if necessary.
\vspace{2mm}

\textbf{b):} Let $x(\pi)$ be a monomial of the form $x(\pi) = \dots x_{\gamma_{\underline{3}}}(-m)x_{\gamma_3}(-m)\dots$ where $x_{\gamma_{\underline{3}}}(-m)x_{\gamma_3}(-m)$ are the first elements which satisfy the exceptional $DC$. This means $x(\pi)$ belongs to the one of the following cases:
\begin{itemize}
 \item [i)] $x(\pi) = \dots x_{\gamma_{\underline{3}}}(-m)x_{\gamma_3}(-m)x_{\gamma_{\underline{3}}}(-(m-1))\dots$ or
\item [ii)]$x(\pi) = \dots x_{\gamma_3} (-(m+1))x_{\gamma_{\underline{3}}}(-m)x_{\gamma_3}(-m)\dots$.
\end{itemize}
We now write $x(\pi) = x(\pi'')x_{\gamma_{\underline{3}}}(-m)x_{\gamma_3}(-m)x(\pi')$.

Since $x_{\gamma_{\underline{3}}}(-m)x_{\gamma_3}(-m)$ are the first elements which satisfy the exceptional $DC$, monomial $x(\pi')$ has a partition as in a), on two subarrays $x(\pi'_1)$ and $x(\pi_2')$ which satisfy $DC$ and $IC$ on $v_{\Lambda^1} \otimes v_{\Lambda^2}$. We get this subarrays, as before, by taking every second factor. 

Let now $v_{\Lambda_2}$ be a highest weight vector for $L(\Lambda_2)$. We know, by Proposition \ref{vektor_j}, that $v_{\Lambda_2}$ has a realization in tensor product of level 1 modules $L(\Lambda_3)$ and $L(\Lambda_4)$:
\begin{align}
v_{\Lambda_2} &= C_1 \cdot w_{123} \otimes w_{124} + C_2 \cdot w_{124} \otimes w_{123} \quad \textrm{in $L(\Lambda_3) \otimes L(\Lambda_3)$}, \\ v_{\Lambda_2} &= C_1 \cdot w_{12} \otimes w_{1234} + C_2 \cdot w_{1234} \otimes w_{12} \quad \textrm{in $L(\Lambda_4) \otimes L(\Lambda_4)$}.
\end{align}
When we regard $L(\Lambda_2)$ as a quotient of the tensor product, vectors $w_{123} \otimes w_{124}$ and  $w_{124} \otimes w_{134}$ (resp. $w_{12} \otimes w_{1234}$ and $w_{1234} \otimes w_{12}$) are collinear (see Proposition \ref{kolinearnost}).

Now we want to construct the intertwining operator $I_{\pi', \Lambda}$ with the following property:
\begin{align}
 \label{svojstvo1} I_{\pi',\Lambda}x(\pi')(v_{\Lambda^1} \otimes v_{\Lambda^2}) &= C \cdot e^{(m-1) \omega}v_{\Lambda_2} \quad \textrm{and} \\
 \label{svojstvo2} I_{\pi',\Lambda} x(\mu)(v_{\Lambda^1} \otimes v_{\Lambda^2}) &= 0 \quad \textrm{if } x(\mu)>x(\pi').
\end{align}

In the worst case (see Remark \ref{najgori}) we have:
\begin{align}
\label{izgled2} x(\pi_2') &= x_{\delta}(-(m-1)) \dots, \ \delta \geq \gamma_{\underline{3}} \quad \textrm{and} \\
\label{izgled1} x(\pi_1') &= x_{\gamma_2}(-(m-1)) \dots \quad \textrm{or} \quad x(\pi_1') &= x_{\delta}(-p) \dots \ \textrm{where $p < m-1$}.
\end{align}

We will define $I'_{\pi', \Lambda}$ as a tensor product of the operators $I_{\pi_1', \Lambda^1}$ and $I_{\pi_2', \Lambda^2}$. 
The intertwining operator $I_{\pi_2', \Lambda^2}$ acts on vector $x(\pi_2')v_{\Lambda^2}$ in the following way (see Proposition \ref{operatori}):
\begin{align}
\label{forma1} I_{\pi_2', \Lambda^2}x(\pi_2')v_{\Lambda^2} &= C \cdot e^{(m-1) \omega} w_{124} \quad \textrm{or} \\ \label{forma2} I_{\pi_2', \Lambda^2}x(\pi_2')v_{\Lambda^2} &= C \cdot e^{(m-1) \omega} w_{12} \quad \textrm{or} \\
\label{forma3}  I_{\pi_2', \Lambda^2}x(\pi_2')v_{\Lambda^2} &= C \cdot e^{(m-1) \omega} w_{123} \quad \textrm{or} \\
\label{forma4}  I_{\pi_2', \Lambda^2}x(\pi_2')v_{\Lambda^2} &= C \cdot e^{(m-1) \omega} w_{1234},
\end{align}
depending on the form of $x(\pi_2')$. For instance, if \\ $x(\pi'_2) = x_{\gamma_{\underline{3}}}(-(m-1))x_{\gamma_4}(-(m-2)) \dots$, then we'll have the case \eqref{forma1}, i.e. $I_{\pi_2', \Lambda^2}x(\pi_2')v_{\Lambda^2} = C \cdot e^{(m-1) \omega} w_{124}$.

Now we choose operator $I_{\pi_1, \Lambda^1}$ so that it acts on vector $x(\pi_1')v_{\Lambda^1}$ as 
\begin{align}
I_{\pi_1', \Lambda^1} x(\pi_1')v_{\Lambda^1} &= C \cdot e^{(m-1)} w_{123} \quad \textrm{in case of \eqref{forma1}}, \\ I_{\pi_1', \Lambda^1} x(\pi_1')v_{\Lambda^1} &= C \cdot e^{(m-1)} w_{1234} \quad \textrm{in case of \eqref{forma2}}, \\ I_{\pi_1', \Lambda^1} x(\pi_1')v_{\Lambda^1} &= C \cdot e^{(m-1)} w_{124} \quad \textrm{in case of \eqref{forma3}}, \\ I_{\pi_1', \Lambda^1} x(\pi_1')v_{\Lambda^1} &= C \cdot e^{(m-1)} w_{12} \quad \textrm{in case of \eqref{forma4}}.
\end{align}
This we can certainly do since $x(\pi_1')$ has form \eqref{izgled1} (see \eqref{operator11}-\eqref{operator13}). This means the operator $I'_{\pi', \Lambda} =  I_{\pi_1'} \otimes I_{\pi_2'}$ composed with the quotient mapping $q: L(\Lambda_3) \otimes L(\Lambda_3) \longrightarrow L(\Lambda_2)$ (resp. $q : L(\Lambda_4) \otimes L(\Lambda_4) \longrightarrow L(\Lambda_2)$) has the property \eqref{svojstvo1} when applied to $x(\pi_1')v_{\Lambda^1} \otimes x(\pi_2')v_{\Lambda^2}$. We will denote this operator with $I_{\pi', \Lambda}$, i.e. we set $I_{\pi', \Lambda} = q \circ I'_{\pi', \Lambda}.$

It remains to verify the property \eqref{svojstvo2}. In order to do so, we have to take a closer look to the partitions $x(\pi_1')$, $x(\pi_2')$ and the operators $I_{\pi_1'}$, $I_{\pi_2'}$. Let $x(\mu)$ be some monomial such that $x(\mu) > x(\pi')$ and let $x(\mu_1)$, $x(\mu_2)$ be its partition. If $x(\mu_1) > x(\pi_1')$, the operator $I_{\pi_1', \Lambda^1}$ will annihilate the vector $x(\mu_1)v_{\Lambda^1}$. If, on the other hand, $x(\mu_1) \leq x(\pi_1')$ then $x(\mu_2) > x(\pi_2')$ (since our ordering is compatible with the multiplication). The question is now whether $I_{\pi_2', \Lambda^2}x(\mu_2)v_{\Lambda^2} = 0$. This is actually the point where we need the restriction from $D_{\ell}^{(1)}$ to $D_4^{(1)}$. Namely, the way in which we defined our intertwining operator $I_{\pi_2', \Lambda^2}$ leaves the last successive block with negative colors of $x(\pi_2')$ intact. For instance, if  \\ $x(\pi_2') = x_{\gamma_{\underline{3}}}(-(m-1))x_{\gamma_{\underline{4}}}(-(m-2)) x_{\gamma_2}(-(m-3)) \dots$ we will have:
\begin{align*}
 I_{\pi_2', \Lambda^2} x(\pi_2') v_{\Lambda^2} &= I_{\pi_2', \Lambda^2} x_{\gamma_{\underline{3}}}(-(m-1))x_{\gamma_{\underline{4}}}(-(m-2)) x_{\gamma_2}(-(m-3)) \dots v_{\Lambda^2} = \\ 
&= x_{\gamma_{\underline{3}}}(-(m-1))x_{\gamma_{\underline{4}}}(-(m-2)) I_{\pi_2', \Lambda^2} x_{\gamma_2}(-(m-3)) \dots v_{\Lambda^2} = \\ &= x_{\gamma_{\underline{3}}}(-(m-1))x_{\gamma_{\underline{4}}}(-(m-2)) \cdot C \cdot e^{(m-3) \omega} w_{1234} = \\ &= C'  \cdot e^{(m-1) \omega} w_{12}. 
\end{align*}
Since in monomial $x(\pi)$ we have the exceptional $DC$ after $x(\pi')$ we can't now apply the next basic intertwining operator on vector $I_{\pi_2', \Lambda^2}x(\pi_2')v_{\Lambda^2}$. If $x(\mu_2)$ is now such that it coincides with $x(\pi_2)$ up to the degree $-(m-3)$ and then looks, for example, like this:
$$x(\mu_2) = x_{\gamma_{4}}(-(m-1))x_{\gamma_{\underline{4}}}(-(m-2)) x_{\gamma_2}(-(m-3)) \dots $$
then $I_{\pi_2', \Lambda^2}x(\mu_2)v_{\Lambda^2} \neq 0$ although $x(\mu_2) > x(\pi_2')$. The restriction to $D_4^{(1)}$ enables us now to cover such cases one by one. Namely, since $D_4$ is a ``small" algebra, it is easy to show that if $x(\mu_2) > x(\pi_2)$ and $I_{\pi', \Lambda}x(\mu)(v_{\Lambda^1} \otimes v_{\Lambda^2}) \neq 0$, then necessarily:
\begin{itemize}
 \item [i)] $x(\mu_1) = x(\pi_1')$ and
 \item [ii)] $x(\mu_2)$ is of the same shape like $x(\pi_2')$ and coincides in colors with it up to the factor $-(m-1)$. 
\end{itemize}
Due to the conditions i) and ii) we are left with only two option for $x(\mu_2)$:
\begin{itemize}
\item[1)] $x(\pi_2') = x_{\gamma_{\underline{3}}}(-(m-1))  \dots \quad \textrm{and} \quad x(\mu_2) = x_{\gamma_{\underline{4}}}(-(m-1)) \dots \ $. 
\\ Now we have $$I'_{\pi', \Lambda} (x(\pi_1')v_{\Lambda^1} \otimes x(\pi_2') v_{\Lambda^2})) = C \cdot e^{(m-1) \omega} (w_{123} \otimes w_{124}).$$ Such operator $I'_{\pi', \Lambda}$ will map $x(\mu_1)v_{\Lambda^1} \otimes x(\mu_2)v_{\Lambda_2}$ to $C \cdot e^{(m-1) \omega} (w_{123} \otimes w_{123})$. But the quotient map will send this vector to zero.
\item[2)] $x(\pi_2') = x_{\gamma_{\underline{3}}}(-(m-1))  \dots \quad \textrm{and} \quad x(\mu_2) = x_{\gamma_4}(-(m-1)) \dots  $. \\ 
Here we have:
$$I'_{\pi', \Lambda} (x(\pi_1')v_{\Lambda^1} \otimes x(\pi_2') v_{\Lambda^2})) = C \cdot e^{m \omega} (w_{1234} \otimes w_{12}).$$ In this case, 
$$I'_{\pi', \Lambda} (x(\mu_1)v_{\Lambda^1} \otimes x(\mu_2)v_{\Lambda_2}) = C \cdot e^{m \omega}(w_{1234} \otimes w_{1234})$$ which is again mapped to zero by quotient map.
\end{itemize}
This means that for the operator $I_{\pi',\Lambda} = q \circ I'_{\pi', \\Lambda}$ the condition ii) is also fulfilled.
\end{proof}

Now we need the same proposition for remaining level 2 module, i.e. $L(\Lambda_2)$. 
\begin{proposition}\label{operatori22}
Let $v_{\Lambda_2}$ be a highest weight vector of level 2 module $L(\Lambda_2)$ for algebra $D_4^{(1)}$. Let $x(\pi)$ be a monomial that satisfies $DC$ and $IC$ for $W(\Lambda_2)$. Then there exist a factorization of monomial $x(\pi)$, $x(\pi) = x(\pi_2) x(\pi_1)$ and an intertwining operator $I_{\pi_1, \Lambda_2}$ which commutes with the action of $\gtl_1$ so that the following holds:
\begin{itemize}
 \item[a)] $I_{\pi_1, \Lambda_2}x(\pi_1)v_{\Lambda_2} = C \cdot e^{n \omega} v_{\Lambda'} \neq 0$ for some $n \in \N$ and some $\Lambda'$,
 \item[b)] $I_{\pi_1, \Lambda_2}x(\pi_1')v_{\Lambda_2} = 0$ for $x(\pi_1') > x(\pi_1)$,
 \item[c)] $I_{\pi_1, \Lambda_2}x(\pi_1)v_{\Lambda_2}= C \cdot e^{n \omega} v_{\Lambda'}$ and $x(\pi_2)^{+n}$ satisfies $DC$ and $IC$ on $W(\Lambda')$.
\end{itemize}
\end{proposition}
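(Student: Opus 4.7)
The plan is to adapt the proof of Proposition \ref{operatori21} to the current setting. By Proposition \ref{vektor_j} we realize $L(\Lambda_2)$ as a submodule of $L(\Lambda_3)\otimes L(\Lambda_3)$ (or equivalently $L(\Lambda_4)\otimes L(\Lambda_4)$), with
\begin{equation*}
v_{\Lambda_2}=C_1\,(w_{123}\otimes w_{124})+C_2\,(w_{124}\otimes w_{123}).
\end{equation*}
Proposition \ref{kolinearnost} provides an invariant complement $V$ so that, modulo $V$, the two basic vectors above are collinear; thus on the quotient $L(\Lambda_3)\otimes L(\Lambda_3)/V\cong L(\Lambda_2)$ we may represent $v_{\Lambda_2}$ by a single tensor $w_{123}\otimes w_{124}$ up to a nonzero scalar.

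Before the main construction I would prove the analogue of Lemma \ref{part} for $W(\Lambda_2)$: a monomial $x(\pi)$ satisfies the $IC$ for $W(\Lambda_2)$ if and only if its $(-1)$ part admits a partition into two subarrays that respectively satisfy the $IC$ for $W(\Lambda_3)$ on the basic vectors $w_{123}$ and $w_{124}$. This follows by reading the $IC$ off the $DC$ via the imaginary factors $x_{\gamma_{\underline{3}}}(-0)x_{\gamma_3}(-0)$ prescribed in Section \ref{poc_uvj_2}; the distinguished allowable block $x_{\gamma_{\underline{3}}}(-1)x_{\gamma_3}(-1)$ plays here the role that $x_{\gamma_{\underline{2}}}(-1)x_{\gamma_2}(-1)$ plays for $W(2\Lambda_0)$, reflecting the relation $x_{\gamma_{\underline{3}}}(-1)x_{\gamma_3}(-1)v_{\Lambda_2}=C\cdot e^{\omega}v_{\Lambda_2}$.

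Combined with Proposition \ref{particija}, admissible monomials split into two cases: (a) $x(\pi)$ factors as $x(\pi^2)x(\pi^1)$ into two subarrays each satisfying $DC$ and $IC$ for an appropriate $W(\Lambda_3)$ (one on $w_{123}$, one on $w_{124}$), or (b) $x(\pi)$ contains an exceptional block $x_{\gamma_3}(-(j{+}1))x_{\gamma_{\underline{3}}}(-j)x_{\gamma_3}(-j)$ or $x_{\gamma_{\underline{3}}}(-(j{-}1))x_{\gamma_3}(-(j{-}1))x_{\gamma_{\underline{3}}}(-j)$. In case (a) I would form the tensor product of the basic level 1 operators of Proposition \ref{operatori} for $x(\pi^1),x(\pi^2)$, intercalate basic skip operators to synchronize possibly unequal degree shifts exactly as in part (a) of Proposition \ref{operatori21}, and finally compose with the quotient map $q:L(\Lambda_3)\otimes L(\Lambda_3)\to L(\Lambda_2)$. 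In case (b) I would mirror part (b) of Proposition \ref{operatori21}: writing $x(\pi)=x(\pi'')\,x_{\gamma_{\underline{3}}}(-m)x_{\gamma_3}(-m)\,x(\pi')$ with $x(\pi')$ admitting a partition as in (a), and setting $I_{\pi',\Lambda_2}=q\circ(I_{\pi_1',\Lambda_3}\otimes I_{\pi_2',\Lambda_3})$ with level 1 operators chosen to align their degree shifts and their target basic vectors so that the composite maps $x(\pi')v_{\Lambda_2}$ to $C\cdot e^{(m-1)\omega}v_{\Lambda_2}$.

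The main obstacle will once more be verifying condition (b), $I_{\pi_1,\Lambda_2}x(\pi_1')v_{\Lambda_2}=0$ for $x(\pi_1')>x(\pi_1)$. As in Proposition \ref{operatori21} the threat comes from monomials $x(\mu)$ whose partition $(x(\mu_1),x(\mu_2))$ satisfies $x(\mu_1)\leq x(\pi_1^1)$ but $x(\mu_2)>x(\pi_1^2)$, since the last successive block of negative colors in one tensor factor is left untouched by the corresponding level 1 operator and surviving images can appear. Here the restriction to $D_4^{(1)}$ is essential: because $\ell=4$ the list of possible colors for the distinguished factor is just $\{\gamma_3,\gamma_{\underline{3}},\gamma_4,\gamma_{\underline{4}}\}$, and in each bad variant the resulting image has the form $e^{n\omega}(w_{\Sigma_1}\otimes w_{\Sigma_2})$ with either $\Sigma_1=\Sigma_2$ or $\Sigma_1\cup\Sigma_2\neq\{1,2,3,4\}$, hence lies in $V$ (by collinearity) or outside the $\omega_2$-weight space, and is therefore killed by $q$. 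A short case check over these four color possibilities closes the argument.
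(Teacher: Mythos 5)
Your overall strategy (realize $L(\Lambda_2)$ inside $L(\Lambda_3)\otimes L(\Lambda_3)$, take tensor products of the level~1 basic operators, synchronize degree shifts with skip operators, and kill the surviving bad terms with a quotient map at the target) is the same as the paper's. But there is a genuine gap at your very first step: you cannot replace $v_{\Lambda_2}$ by the single tensor $w_{123}\otimes w_{124}$. By Proposition \ref{vektor_j} the highest weight vector of the submodule $L(\Lambda_2)\subset L(\Lambda_3)\otimes L(\Lambda_3)$ is $v_{\Lambda_2}=C_1\, w_{123}\otimes w_{124}+C_2\, w_{124}\otimes w_{123}$ with both coefficients nonzero, and the operators you want to apply to it are tensor products $I_1\otimes I_2$ of level~1 intertwining operators: these live on the tensor product, commute only with $\gtl_1$, and are not $\gtl$-module maps, so they need not preserve the invariant complement $V$ and do not descend to the quotient $L(\Lambda_3)\otimes L(\Lambda_3)/V$. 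The collinearity of Proposition \ref{kolinearnost} holds only modulo $V$, so it gives no control over $(I_1\otimes I_2)\bigl(x(\mu)(w_{124}\otimes w_{123})\bigr)$ versus $(I_1\otimes I_2)\bigl(x(\mu)(w_{123}\otimes w_{124})\bigr)$; since $I_1\otimes I_2$ is not symmetric in the two factors, the two summands of $v_{\Lambda_2}$ are treated genuinely differently and neither can be discarded a priori.

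This is precisely the point on which the paper's proof spends most of its effort: it keeps the full sum $v_{\Lambda_2}=\sum_s C_s\, w_{12\Psi_1^s}\otimes w_{12\Psi_2^s}$, chooses a distinguished basic vector $w_{12\Psi_1^0}\otimes w_{12\Psi_2^0}$ adapted to the $(-1)$ part of the first block $x(\pi_1)$, and then verifies, in the two relevant configurations (second subarray ending in a factor of degree $\leq -2$, or ending in $x_{\gamma_{\underline{2}}}(-1)$), that the chosen second-component operator annihilates $x(\mu_1^2)w_{12\Psi_2^s}$ for every $s\neq 0$ and every partition of every $x(\mu_1)\geq x(\pi_1)$. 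Only after showing that a single basic vector survives does the argument reduce to Proposition \ref{operatori21}. Your sketch omits this selection-and-annihilation step entirely, and without it properties a) and b) of the proposition are not established. Two smaller points: the paper also treats separately the monomials ending in $x_{\gamma_{\underline{3}}}(-1)x_{\gamma_3}(-1)$, for which the identity operator suffices; and your proposed analogue of Lemma \ref{part} speaks of ``$IC$ on $w_{124}$'', but $w_{124}$ is not a highest weight vector of a level~1 module, so that notion is undefined as stated and would have to be replaced by the explicit annihilation criteria of Lemma \ref{lem_op_1}.
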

\begin{proof}
 Let $x(\pi)$ be a monomial that satisfies $DC$ and $IC$ for $W(\Lambda_2)$. If this monomial is of the 
form $x(\pi) = \dots x_{\gamma_{\underline{3}}}(-1)x_{\gamma_3}(-1)$ then the required operator is simply the identity operator since greater monomials have either more $(-1)$ factors (which means they annihilate $v_{\Lambda_2}$) or have $x_{\gamma_2}(-1)$ factor which again annihilates $v_{\Lambda_2}$ (see Subsection \ref{poc_uvj_2}). On the other hand, $x_{\gamma_{\underline{3}}}(-1)x_{\gamma_3}(-1)v_{\Lambda_2} = C \cdot e^{\omega} v_{\Lambda_2}$ (see Remark \ref{izvlacenje}), so the condition c) is also satisfied.

If $x(\pi)$ is not of the form discussed above, this means there exist a first part of it, let's denote it $x(\pi_1)$, such that $x(\pi_1)$ has a partition in two subarrays, $x(\pi_1^1)$ and $x(\pi_1^2)$ that satisfy level 1 conditions. We find this subarrays again by taking every second factor. 

Next, we observe the realization of $v_{\Lambda_2}$ in the tensor product $L(\Lambda_3) \otimes L(\Lambda_3)$, resp. $L(\Lambda_4) \otimes L(\Lambda_4)$. As we know, $v_{\Lambda_2}$ is a linear combination of the basis vectors (see Proposition  \ref{vektor_j}), i.e 
\begin{align}\label{prikaz}
v_{\Lambda_2} = \sum_s C_s \cdot w_{12 \Psi_1^s} \otimes w_{12 \Psi_2^s}
\end{align}
where $\Psi_1^s$ and $\Psi_2^s$ are disjoint sets such that $\Psi_1^s \cup \Psi_2^s = \{ 3,4\}$.

Vector $x(\pi_1^1)w_{12 \Psi_1^s} \otimes x(\pi_1^2) w_{12 \Psi_2^s}$ will be zero for some $s$ but not not for all of them. Now we find those vectors, i.e. vectors $w_{12 \Psi_1^s} \otimes w_{12 \Psi_2^s}$ from \eqref{prikaz} such that $x(\pi_1^1)w_{12 \Psi_1^s} \otimes x(\pi_1^2) w_{12 \Psi_2^s} \neq 0$. In the worst case (see Remark \ref{najgori}), if $x(\pi)$ has the $(-1)$ elements, we are in the one of the following situations:
\begin{itemize}
 \item [a)] $x(\pi_1^1) = \dots x_{\gamma}(-1)$, $x(\pi_1^2) = \dots x_{\delta}(-p)$ where $p \geq 2$ and $\gamma \leq \gamma_3$. We first choose $w_{12 \Psi_1^s}$ such that $x(\pi_1^1)w_{12 \Psi_1^s} \neq 0$. This is done by selectig $\Psi_1^s$ which doesn't contain $r$ if $\gamma_r$ appears in the starting successive block of $x(\pi_1^1)$. After we have chosen $w_{12 \Psi_1^s}$, denote it with $w_{12 \Psi_1^0}$, we take the corresponding $w_{12 \Psi_2^0}$. Then surely $x(\pi_1^2) w_{12 \Psi_2^0} \neq 0$ since $x(\pi_1^2)$ has no elements of $(-1)$ degree, so vector $w_{12 \Psi_1^0} \otimes w_{12 \Psi_2^0}$ has the required property.
 \item[b)] $x(\pi_1^1) = \dots x_{\gamma}(-1)$, $x(\pi_1^2) = \dots x_{\gamma_{\underline{2}}}(-1)$. We select $w_{12 \Psi_1^0} \otimes w_{12 \Psi_2^0}$ as in a). Then again $x(\pi_1^2) w_{12 \Psi_2^0} =\dots x_{\gamma_{\underline{2}}}(-1) w_{12 \Psi_2^0}\neq 0$ (see Lemma \ref{lem_op_1}).
\end{itemize}
Now, for the monomials $x(\pi_1^1)$, $x(\pi_1^2)$ and basic vector $w_{12 \Psi_1^0} \otimes w_{12 \Psi_2^0}$ we define operators $I_{\pi_1^1, \{12 \Psi_1^0 \}}$ and $I_{\pi_1^2, \{12 \Psi_2^0 \}}$ as in Section \ref{operatori}. We need to show that operators selected in this way annihilate other partitions of $x(\pi_1)$ on this and other basic vectors from \eqref{prikaz}. Further on, if $x(\mu_1) > x(\pi_1)$, then those operators should annihilate $x(\mu_1^1)w_{12 \Psi_1^s} \otimes x(\mu_1^2) w_{12 \Psi_2^s}$ for all $s$ from \eqref{prikaz} and all partitions $x(\mu_1^1)$, $x(\mu_1^2)$, of $x(\mu_1)$ as well. So, let $x(\mu_1) \geq x(\pi_1)$ and let $x(\mu_1^1)$ and $x(\mu_1^2)$ be some partition of monomial $x(\mu_1)$. We examine the cases a) and b) separately.

Case a): since $x(\pi_1^2) = \dots x_{\delta}(-p)$ where $p>2$, on the second component we apply the basic intertwinig operator which sends $w_{12 \Psi_2^0}$ to $e^{\omega}v_{\Lambda_0}$. This operator will annihilate any other $w_{12 \Psi_2^s}$ different from  $w_{12 \Psi_2^0}$. So, the only basic vector from \eqref{prikaz} that can ''survive" is $w_{12 \Psi_1^0} \otimes w_{12 \Psi_2^0}$. 

Case b): here we have $x(\pi_1^2) = \dots x_{\gamma_{\underline{2}}}(-1)$, i.e. \\ $x(\pi_1)w_{12 \Psi_1^0} \otimes w_{12 \Psi_2^0} = \dots (x_{\gamma}(-1) w_{12 \Psi_1^0} \otimes x_{\gamma_{\underline{2}}}(-1)w_{12\Psi_2^0}) $. We apply on the second factor therefore the basic intertwining operator which sends $x_{\gamma_{\underline{2}}}(-1)w_{12\Psi_2^0} = C \cdot e^{\omega}w_{1 \Psi_2^0}$ to $C' \cdot e^{2\omega}v_{\Lambda_0}$.  

Since $x(\mu_1) \geq x(\pi_1)$, monomial $x(\mu_1)$ has to have two $(-1)$ factors as $x(\pi_1)$ does (if it had more it would annihilate $v_{\Lambda_2}$). So, we have:
\begin{align}\label{baz_vekt}
& x(\mu_1) v_{\Lambda_2} = \sum x(\mu_1^1) w_{12 \Psi_1^s} \otimes x(\mu_1^2)w_{12 \Psi_2^s} = \\ \notag & = \sum \dots (x_{\delta_1}(-1) w_{12 \Psi_1^s} \otimes x_{\delta_2}(-1)w_{12 \Psi_2^s})
\end{align}
where sum goes over $s$ and all the partitions of $x(\mu_1)$.

Now, if $x_{\delta}(-1)w_{12 \Psi_2^s} = D \cdot e^{\omega} w_{1 \Psi_2^0}$ for some $\delta \in \Gamma_1$ and some $s$, then necessarily $\delta = \gamma_{\underline{2}}$ and $w_{12 \Psi_2^s}=w_{12 \Psi_2^0}$.
So, if we want $I_{\pi_1^2, \ \{12 \Psi_2^0 \} } x_{\delta_2}(-1) w_{12 \Psi^s} \neq 0$, then we have to have $\delta = \gamma_{\underline{2}}$ and $w_{12 \Psi_2^s} = w_{12 \Psi_2^0}$. Therefore we conclude that the only basis vector from \eqref{baz_vekt} that will ``survive" after we apply operator $I_{\pi_1^2, \ \{12 \Psi_2^0 \} }$ is again $w_{12 \Psi_1^0} \otimes w_{12 \Psi_2^0}$.
\vspace{2mm}

Altogether, cases a) and b) show us if some vectors $x(\mu_1^1)w_{12 \Psi_1^s} \otimes x(\mu_1^2) w_{12 \Psi_2^s}$ differ from zero after the action of the operator, then it must be $x(\mu_1^1)w_{12 \Psi_1^s} \otimes x(\mu_1^2) w_{12 \Psi_2^s} =  x(\mu_1^1)w_{12 \Psi_1^0} \otimes x(\mu_1^2) w_{12 \Psi_2^0}$. This means we actually analyze $x(\pi_1)$ and $x(\mu_1)$ solely on $w_{12 \Psi_1^0} \otimes w_{12 \Psi_2^0}$ and the existence of the required operators follows now from the previous proposition.
\end{proof}

\section{Proof of linear independence in level 2 case}\label{kraj2}
Now we can state the main result of this paper.

\begin{theorem}
Let $L(\Lambda)$ be a standard module of level two for algebra $D_4^{(1)}$ and let $v_{\Lambda}$ be its highest weight vector. The set of monomial vectors
\begin{align} \label{baza_2}
\{ \ x(\pi)v_{\lambda} \ \vert \ x(\pi) \ \textrm{satisfies $DC$ and $IC$ on $W(\Lambda)$} \}
\end{align}
is a basis of $W(\Lambda)$.
\end{theorem}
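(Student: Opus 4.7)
The plan is as follows. By Propositions \ref{PU_UR_2} and \ref{PU_UR_22} the set in \eqref{baza_2} already spans $W(\Lambda)$, so only linear independence remains. The argument will mirror the level one proof in Section \ref{kraj1}: a simultaneous induction on the degree $-n$ and on the monomial order, carried out across all Feigin-Stoyanovsky's type subspaces of level two modules for $D_4^{(1)}$ at once, driven by the intertwining operators supplied by Propositions \ref{operatori21} and \ref{operatori22} together with the simple current operator $e(\omega)$ of Section \ref{op_pr_struje}.

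Concretely, I would fix a hypothetical relation $\sum c_{\pi'} x(\pi') v_\Lambda = 0$ with all $x(\pi')$ admissible for $W(\Lambda)$, of total degree $\geq -n$, and with $c_{\pi'}=0$ for $x(\pi')<x(\pi)$; the aim is $c_\pi=0$. If $x(\pi)=\dots x_\delta(-m)$ with $m\geq 2$, I would apply a composition of tensor products of the basic operators from Section \ref{operatori_1}, postcomposed with the quotient map $L(\Lambda^1)\otimes L(\Lambda^2)\to L(\Lambda_j)$ when $\Lambda=\Lambda_j$, to send $v_\Lambda$ to a nonzero multiple of $e^{(m-1)\omega}(v_{\Lambda_0}\otimes v_{\Lambda_0})$; replacing $e^{(m-1)\omega}$ by the collinear $e((m-1)\omega)$ and commuting it leftward through every factor via \eqref{omega} lifts all factor degrees by $m-1$, producing a relation on $W(2\Lambda_0)$ to which the induction hypothesis applies. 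If instead $x(\pi)=\dots x_\delta(-1)$, Proposition \ref{operatori21} (or \ref{operatori22} when $\Lambda=\Lambda_2$) gives a factorization $x(\pi)=x(\pi_2)x(\pi_1)$ and an operator $I_{\pi_1,\Lambda}$ which annihilates every $x(\pi_1')v_\Lambda$ with $x(\pi_1')>x(\pi_1)$, sends $x(\pi_1)v_\Lambda$ to $C\cdot e^{n\omega}v_{\Lambda'}\neq 0$, and makes $x(\pi_2)^{+n}$ admissible for $W(\Lambda')$. Applying $I_{\pi_1,\Lambda}$ kills the $>$-terms by construction and the $<$-terms by induction, leaving $\sum c_{\pi'} x(\pi_2')e^{n\omega}v_{\Lambda'}=0$; converting $e^{n\omega}$ to $e(n\omega)$, pulling it through via \eqref{omega}, and using injectivity of $e(n\omega)$ reduces the problem to $\sum c_{\pi'} x(\pi_2')^{+n} v_{\Lambda'}=0$ on $W(\Lambda')$, which the induction hypothesis closes.

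The routine bookkeeping during the inductive step is isolated in two places. First, after lifting factor degrees by $n$ (or by $m-1$) every monomial still satisfies the difference conditions, but I must recheck the initial conditions on $W(\Lambda')$; as in the level one argument, any surviving monomial violating the lifted IC must contain a $(-1)(-1)$ pattern $x_{\underline\delta}(-1)x_\delta(-1)$ with $\delta\neq\gamma_2$, but this already violates the shifted DC at that degree, so all such offending terms drop out. Second, the fixed monomial $x(\pi_2)^{+n}$ itself is admissible for $W(\Lambda')$ by the third clause of the relevant proposition, ensuring that the induction hypothesis really applies to $x(\pi)$ in its lifted form.

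The genuine obstacle does not live inside the proof above but inside the construction of the operators $I_{\pi_1,\Lambda}$, which is where the restriction to $D_4^{(1)}$ bites: the exceptional DC blocks $x_{\gamma_3}(-j-1)x_{\gamma_{\underline{3}}}(-j)x_{\gamma_3}(-j)$ produce a short list of candidate partitions $x(\mu)>x(\pi_1)$ that a priori could survive the tensor product operator, and they are killed only because the quotient map $L(\Lambda^1)\otimes L(\Lambda^2)\to L(\Lambda_2)$ collapses the few relevant basic vectors $w_{12\Psi_1}\otimes w_{12\Psi_2}$. Once this has been carried out (as in Propositions \ref{operatori21} and \ref{operatori22}), the induction above runs to completion and \eqref{baza_2} is a basis of $W(\Lambda)$.
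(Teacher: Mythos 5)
Your overall strategy coincides with the paper's: spanning is quoted from Propositions \ref{PU_UR_2} and \ref{PU_UR_22}, and linear independence is proved by a simultaneous induction on degree over all level-two subspaces, driven by the operators of Propositions \ref{operatori21} and \ref{operatori22}, the compatibility of the ordering, and the injectivity of $e(n\omega)$. Your explicit case split between $x(\pi)=\dots x_\delta(-m)$ with $m\geq 2$ and $x(\pi)=\dots x_\delta(-1)$ is a harmless reorganization; the paper absorbs both cases into the cited propositions and applies them uniformly.

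There is, however, one step where your bookkeeping does not go through. After applying $I_{\pi_1,\Lambda}$ and cancelling $e(n\omega)$ you are left with $\sum_{x(\mu_1)=x(\pi_1)} c_\mu\, x(\mu_2)^{+n} v_{\Lambda'}=0$, and you claim that any $x(\mu_2)^{+n}$ violating the initial conditions for $W(\Lambda')$ must contain a block $x_{\underline\delta}(-1)x_\delta(-1)$ with $\delta\neq\gamma_2$ whose unshifted form already violates the difference conditions at the corresponding degree. That is the level-one argument, and it fails at level two: the frequency bounds of Proposition \ref{uvj_razl_dr_zap} are $\leq 2$, so a pair $x_{\gamma_{\underline 3}}(-j)x_{\gamma_3}(-j)$ is perfectly admissible under the level-two $DC$, and the resulting $(-1)(-1)$ block after shifting need not annihilate $v_{\Lambda'}$ either (on $v_{\Lambda_j}$ it produces a nonzero multiple of $e^{\omega}v_{\Lambda_j}$ by \eqref{po_uvjet_2}). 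So non-admissible terms can genuinely survive this stage. The paper resolves this differently: since the fixed monomial $x(\pi_2)^{+n}$ does satisfy $IC$ on $W(\Lambda')$ by property c), one applies Propositions \ref{operatori21} and \ref{operatori22} again to the next block of $x(\pi_2)^{+n}$, kills the strictly larger and strictly smaller terms as before, and iterates this block-by-block extraction until every surviving monomial is admissible, at which point the induction hypothesis on degree applies. You need either this iteration or a genuine level-two substitute for your vanishing claim to close the argument.
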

\begin{proof}
We know (see Propositions \ref{PU_UR_2} and \ref{PU_UR_22}) that set of admissible vectors \eqref{baza_2} spans $W(\Lambda)$. It is left to show the linear independence of this vectors. The proof is the same as for level one, we carry it out for all modules of level two simultaneously, by induction on the degree of the monomials. 

If a monomial $x(\pi)$ satisfies $DC$ and $IC$ on $W(\Lambda)$ then by Propositions \ref{operatori21} and \ref{operatori22} there exist a partition of the monomial $x(\pi)$, $x(\pi) = x(\pi_2) x(\pi_1)$ and an intertwining operator $I_{\pi_1, \Lambda}$ which commutes with the action of $\gtl_1$ such that the following holds:
\begin{itemize}
 \item[a)] operator $I_{\pi_1, \Lambda}$ acts on  $x(\pi_1)$ and $I_{\pi}x(\pi_1)v_{\Lambda_2} = C \cdot e^{n \omega} v_{\Lambda'} \neq 0$ for some $n \in \N$ and some $\Lambda'$,
 \item[b)] $I_{\pi_1, \Lambda}x(\pi_1')v_{\Lambda_2} = 0$ for $x(\pi_1') > x(\pi_1)$,
 \item[c)] $I_{\pi_1, \Lambda}x(\pi_1)v_{\Lambda_2}= C \cdot e^{n \omega} v_{\Lambda'}$ and $x(\pi_2)^{+n}$ satisfies $DC$ and $IC$ on $W(\Lambda')$.
\end{itemize}
Suppose now the theorem is true for the monomials of degree greater than $-n$. Let
\begin{align}\label{sumica}
\sum_{\mu} c_{\mu} x(\mu) v_{\Lambda} =0
\end{align}
where all the monomials are of degree $\geq -n$. Fix $x(\pi)$ in \eqref{sumica} and suppose $c(\mu) =0$ for $x(\mu) < x(\pi)$. We apply operator $I_{\pi_1, \Lambda}$ to the \eqref{sumica} and get:
\begin{align}
 I_{\pi_1, \Lambda} \left( \sum_{x(\mu_1) > x(\pi_1)}  c_{\mu}x(\mu) \right) v_{\Lambda} &+ I_{\pi_1, \Lambda} \left( \sum_{x(\mu_1) =  x(\pi_1)} c_{\mu}x(\mu) \right) v_{\Lambda} + \\ &+ I_{\pi_1, \Lambda} \left( \sum_{x(\mu_1) < x(\pi_1)}  c_{\mu}x(\mu) \right) v_{\Lambda} =0
\end{align}
In the third sum, the coefficients are by assumption equal to zero. In the first, by b) we have $I_{\pi_1, \Lambda}x(\mu_1)v_{\Lambda}=0$. What is left is:
$$I_{\pi_1, \Lambda} \left( \sum_{x(\mu_1) =  x(\pi_1)} c_{\mu}x(\mu) \right) v_{\Lambda} =e(n \omega) \cdot C \cdot \left( \sum_{x(\mu_1) =  x(\pi_1)} c_{\mu}x(\mu_2)^{+n} \right) v_{\Lambda'} =0$$
Since $e(n \omega)$ is an injection we conclude:
$$\left( \sum_{x(\mu_1) =  x(\pi_1)} c_{\mu}x(\mu_2)^{+n} \right) v_{\Lambda'} =0$$
The monomials $x(\mu_2)^{+n}$ do not necessary satisfy $IC$ on $v_{\Lambda'}$. If this is the case, we proceed on the next block, i.e. we look now at the $x(\pi_2)^{+n}v_{\Lambda'}$ and we find the corresponding partition, $x(\pi_2)^{+n} = x(\pi_2^2)x(\pi_2^1)$ and the operator, denote it $I_{\pi_2, \Lambda'}$, for the monomial $x(\pi_2)^{+n}$ such that a), b) and c) hold. We get:
\begin{align}
 I_{\pi_2, \Lambda'} \left( \sum_{\substack{x(\mu_1) =  x(\pi_1) \\  x(\mu_2^{1}) > x(\pi_2^1)}}  c_{\mu}x(\mu_2)^{+n} \right) v_{\Lambda'} &+ I_{\pi_2, \Lambda'} \left( \sum_{\substack{x(\pi_1') =  x(\pi_1) \\  x(\mu_2^{1}) = x(\pi_2^1)}} c_{\mu}x(\mu_2)^{+n} \right) v_{\Lambda'} + \\ &+ I_{\pi_2, \Lambda'} \left( \sum_{\substack{x(\mu_1) =  x(\pi_1) \\  x(\mu_1^2) < x(\pi_1^2)}}  c_{\mu}x(\mu_2)^{+n} \right) v_{\Lambda'} = 0
\end{align}
Here again only the second sum remains. Let $I_{\pi_2, \Lambda'}x(\pi_2^1)v_{\Lambda'} = C \cdot e^{m\omega} v_{\Lambda''}$. We commute $e^{m \omega} = D \cdot e(m \omega)$  and get 
$$\left( \sum_{\substack{x(\mu_1) =  x(\pi_1) \\  x(\mu_1^2) = x(\pi_1^2)}}  c_{\mu}x(\mu_2^2)^{+n+m} \right) v_{\Lambda''} =0.$$
If all remaining monomials satisfy $IC$ for $W(\Lambda'')$ we apply the assumption of the induction. If not, we continue in the same way.
\end{proof}


\begin{thebibliography}{0000000000}
\bibitem[C1]{C1} C. Calinescu, \textit{Intertwining vertex operators and certain representations of $\widehat{\mathfrak{sl}(n)}$}, Commun. Contemp. Math. {\bf 10} (2008), 47--79.

\bibitem[C2]{C2} C. Calinescu, \textit{Principal subspaces of higher-level standard $\widehat{\mathfrak{sl}(3)}$-modules}, J. Pure Appl. Algebra {\bf 210} (2007), 559--575.

\bibitem [CLM1]{CLM1}    S. Capparelli, J. Lepowsky and A. Milas,
\textit{The Rogers-Ramanujan recursion and intertwining operators},
Comm. Contemporary Math. {\bf 5} (2003), 947--966.

\bibitem [CLM2]{CLM2}    S. Capparelli, J. Lepowsky, A. Milas,  
\textit{The Rogers-Selberg recursions, the Gordon-Andrews identities and intertwining operators},  Ramanujan J.  {\bf 12}  (2006),  no. 3, 379--397

\bibitem[DL]{DL} C. Dong and J. Lepowsky, \textit{Generalized Vertex Algebras and Relative Vertex Operators}, Progress in Math. {\bf Vol. 112}, Birkha\"user, Boston, 1993.

\bibitem [DLM]{DLM} C. Dong, H. Li and G. Mason, \textit{Simple currents and extensions of vertex operator algebras}, Comm. Math. Physics {\bf 180} (1996), 671--707.

\bibitem[FH]{FH} W. Fulton, J. Harris, \textit{Representation theory}, Graduated Texts in Mathematics, Springer-Verlag, New York, 1991.

\bibitem[FJLMM]{FJLMM} B. Feigin, M. Jimbo, S. Loktev, T. Miwa and E. Mukhin, \textit{Bosonic formulas for $(k,\ell)$-admissible partitions},  Ramanujan J. {\bf 7}  (2003),  no. 4, 485--517.; \textit{Addendum to `Bosonic formulas for $(k,\ell)$-admissible partitions'},  Ramanujan J. {\bf 7}  (2003),  no. 4, 519--530

\bibitem[FK]{FK} I. Frenkel, V. Kac, \textit{Basic representations of affine Lie algebras and
dual resonance models}, {\em Invent. Math.} {\bf 62} (1980), 23--66.

\bibitem[FLM]{FLM} I. Frenkel, J. Lepowsky and A. Meurman, \textit{Vertex Operator Algebras and the Monster}, Pure and Appl. Math. {\bf 134}, Academic Press, Boston, 1988.

\bibitem [FS]{FS} A. V. Stoyanovsky and B. L. Feigin, \textit{Functional models of the representations of current algebras, and semi-infinite Schubert cells}, (Russian) Funktsional. Anal. i Prilozhen. {\bf 28} (1994), no. 1, 68--90, 96; translation in Funct. Anal. Appl. {\bf 28} (1994), no. 1, 55-72; preprint B. Feigin and A. Stoyanovsky, \textit{Quasi-particles models for the representations of Lie algebras and geometry of flag manifold}, hep-th/9308079, RIMS 942.

\bibitem [G]{G} G. Georgiev, \textit{Combinatorial constructions of modules for
infinite-dimensional Lie algebras, I. Principal subspace},  J. Pure
Appl. Algebra \textbf{112} (1996), 247--286.

\bibitem[H]{H} J. E. Humphreys, \textit{Introduction to Lie Algebras and Representation Theory}, Graduated Texts in Mathematics, Springer-Verlag, New York, 1972.

\bibitem[J]{J} M. Jerkovi\' c, \textit{Recurrence relations for characters of affine 
Lie algebra $A_{\ell}^{(1)}$}, J. Pure Appl. Algebra {\bf 213} (2009), 
913--926.

\bibitem[K]{K} V.G. Kac, \textit{Infinite-dimensional Lie algebras}, 3rd ed. Cambridge
University Press, Cambridge, 1990.

\bibitem [KKMMNN]{KKMMNN} S.-J. Kang, M. Kashiwara, K. C. Misra, T. Miwa,  T. Nakashima and A. Nakayashiki, \textit{Affine crystals and vertex models}, International Journal of Modern Physics A, Vol. 7, Suppl. 1A,  Proceedings of the RIMS Research Project 1991, "Infinite Analysis", World Scientific, Singapore, 1992, 449--484.

\bibitem[LL]{LL} J. Lepowsky, H.-S. Li, \textit{Introduction to Vertex Operator Algebras and Their Representations}, Progress in Math. {\bf Vol. 227}, Birkh\"auser, Boston, 2004.

\bibitem[LP]{LP} J. Lepowsky and M. Primc, \textit{Structure of the standard modules for
the affine Lie algebra $A_{1}^{(1)}$}, {\em Contemporary Math.} {\bf 46}, (1985.), 1--84.

\bibitem[LW1]{LW1} J. Lepowsky and R. L. Wilson, \textit{The structure of standard modules, I: Universal algebras and the Rogers-Ramanujan identities}, Invent. Math. {\bf 77} (1984), 199--290.

\bibitem[LW2]{LW2} J. Lepowsky and R. L. Wilson, \textit{The structure of standard modules, II: The case $A_{1}^{(1)}$, principal gradation}, Invent. Math. {\bf 79} (1985), 417--442.

\bibitem[P1]{P1} M. Primc, \textit{Vertex operator construction of standard modules for $A_n^{(1)}$}, Pacific J. Math {\bf 162} (1994), 143--187.

\bibitem[P2]{P2} M. Primc, \textit{Basic Representations for classical affine Lie algebras}, J. Algebra {\bf 228} (2000), 1--50.

\bibitem[P3]{P3} M. Primc, \textit{ $(k,r)$-admissible configurations and intertwining
operators},Contemp. Math. {\bf 442} (2007), 425--434.

\bibitem[S]{S} G. Segal, \textit{Unitary representations of some infinite-dimensional
groups}, {\em Commun. Math. Phys.} {\bf 80} (1981), 301--342.

\bibitem[T1]{T1} G. Trup\v cevi\'c, \textit{Combinatorial bases of Feigin-Stoyanovsky's type subspace of level 1 standard modules for $\mathfrak{sl}(\ell + 1, \C)$}, arXiv:0807.3363v1 [math.QA] 22 July 2008

\bibitem[T2]{T2} G. Trup\v cevi\'c, \textit{Combinatorial bases of Feigin-Stoyanovsky's type subspaces of higher-level standard $\tilde{\mathfrak{sl}}(\ell+1,\C)$-modules}, arXiv:0810.5152v1 [math.QA] 29 Oct 2008

\end{thebibliography}
\end{document}